\newtheorem{theorem}{Theorem}
 \newtheorem{proposition}{Proposition}
 \newtheorem{remark}{Remark}
 \newtheorem{definition}{Definition}
 \newtheorem{lemma}{Lemma}
 \newtheorem{corollary}{Corollary}
 \newtheorem{example}{Example}
 \def \dist{{\rm dist} }
 \renewcommand{\Re}{\mathbb{R}}
  \newcommand{\V}{\mathcal{V}}
\newcommand{\R}{\mathbb{R}}
\newcommand{\ri}{{\rm ri\,}}
\newcommand{\bd}{{\rm bd\,}}
\newcommand{\inn}{{\rm int\,}}
\newcommand{\cl}{{\rm cl\,}}
\newcommand{\co}{{\rm co\,}}
\newcommand{\norm}[1]{\|#1\|}
\date{}
\begin{document}

\title{
Second-order  optimality conditions for non-convex  set-constrained optimization problems}

\author{Helmut Gfrerer\thanks{Institute of Computational Mathematics, Johannes Kepler University Linz, A-4040 Linz, Austria, e-mail: helmut.gfrerer@jku.at. This author's research was supported by the Austrian Science Fund (FWF) under grant P29190-N32.} \ \ \ \ \ \ \ \ Jane J. Ye\thanks{Department of Mathematics
and Statistics, University of Victoria, Victoria, B.C., Canada V8W 2Y2, e-mail: janeye@uvic.ca. The research of this author was partially
supported by NSERC.}\ \ \ \ \ \ \ \ Jinchuan Zhou\thanks{Department of Statistics, School of Mathematics and Statistics, Shandong University of Technology,
 Zibo 255049, P.R. China, e-mail: jinchuanzhou@163.com. This author's work is supported by National Natural Science Foundation of China (11771255, 11801325) and Young Innovation Teams of Shandong Province (2019KJI013).}}

 \maketitle

 \noindent
 {\bf Abstract.}\ In this paper we study second-order  optimality conditions for non-convex set-constrained optimization problems.  For
 a convex set-constrained optimization problem,  it is well-known that second-order optimality conditions involve the support function of the second-order tangent set.
 In this paper we propose two approaches for establishing second-order optimality conditions for the non-convex case. In the first approach we extend the concept of the support function so that it is applicable to general non-convex set-constrained problems, whereas in the second approach we introduce the notion of the directional regular tangent cone and apply classical results of convex duality theory. Besides the second-order optimality conditions, the novelty of our approach lies in the systematic introduction and use, respectively, of directional versions of well-known concepts from variational analysis.

 \medskip
 \noindent
 {\bf Keywords:}\ second-order tangent sets, second-order  optimality conditions, 
  lower generalized support function,  {directional} metric subregularity, 
  directional normal cones, directional regular tangent cones, directional Robinson's constraint qualification, directional non-degeneracy.

 \medskip
 \noindent
{\bf AMS subject classifications.}\ 90C26, 90C46, 49J53.

\newpage
 \section{Introduction}

 Second-order optimality conditions have long been recognized as an  important tool  in  optimization theory and algorithms.
In this paper we aim at developing second-order
 optimality conditions for   a  set-constrained optimization problem in the form of
 \begin{flalign}\label{primal problem}
(P)\hspace*{5cm} &\min\ \  f(x) \ \ \  {\rm s.t.}\ \ g(x)\in \Lambda, &
 \end{flalign}
 where $f:\Re^n\rightarrow  \Re$ and $g :\Re^n\rightarrow  \Re^m$  are twice continuously differentiable, and $\Lambda$ is a closed subset in $\Re^m$.  For the case where $\Lambda$ is convex, a complete theory of  second-order necessary and sufficient optimality conditions  has been developed by Bonnans, Cominetti and Shapiro in  \cite{BCS99} and the results have been reviewed in the monograph of Bonnans and Shapiro \cite{BS}.

 In recent years, some important problem classes which can be reformulated in the form of problem (P) with non-convex $\Lambda$ have attracted much attention from the optimization community. These problems include the mathematical program with complementarity constraints (MPCC) (see e.g. \cite{lpr}),  the mathematical program with second-order cone complementarity constraints (SOC-MPCC) (see e.g. \cite{YZ15}) and  the mathematical program with
 semi-definite cone complementarity constraints (SDC-MPCC) (see e.g. \cite{DSY14}). Unlike the first-order optimality conditions for which much research works have been appeared, there 
{is}  very little research done with the second-order optimality conditions for MPCC, SOC-MPCC and SDC-MPCC, let 
alone
 the general non-convex set-constrained problem (\ref{primal problem}). The classical second-order necessary optimality condition for MPCCs was given  in \cite[Theorem 7(1)]{SS} under the MPCC strict Mangasarian-Fromovitz constraint qualification (SMFCQ).  Some weaker second-order necessary optimality conditions for MPCCs were derived in \cite{GGY}.  {For the case when $\Lambda$ is the union of finitely many convex polyhedral sets, which comprises MPCCs, strong second-order necessary optimality conditions were given in \cite{Gfr14a} under a directional metric subregularity constraint qualification which is much weaker than SMFCQ.}
 Recently a second-order necessary optimality condition is derived in  \cite[Theorem 5.1]{CYZZ} for SOC-MPCCs under the nondegeneracy condition (equivalently the generalized linear independence constraint qualification, generalized LICQ).

 To our knowledge, there is no work dealing with the second-order optimality condition for the general non-convex set-constrained problem in the form (\ref{primal problem}).
The main purpose of this paper is to fill this gap.

In case of non-convex set  $\Lambda$,  first-order necessary optimality conditions can be derived by means of variational analysis. From the work of Bonnans, Cominetti and Shapiro \cite{BCS99},  it is well known that the second-order optimality condition must involve in some way the second-order tangent set to the set $\Lambda$, and the non-convexity of this second-order tangent set is also an issue.  We consider two different approaches for handling non-convex  second-order tangent sets. In the first approach, solely based on non-convex variational analysis, we first show that directional metric subregularity of the feasible set mapping carries over to the  second-order linearized subproblem. Then we introduce the concept of {\em lower generalized support function} (which coincides with  the support function in the convex case) in order to state the second-order necessary optimality conditions in Theorem \ref{second-order-theorem-1}.

In the second approach, convex duality plays an essential role. We first introduce the {\em directional regular (Clarke) tangent cone} to $\Lambda$ and state its relations with the directional limiting normal cone and the second-order tangent set. Using these relations we {introduce a new constraint qualification called {\em directional Robinson's constraint qualification} and we} can carry over the ideas already employed in \cite{BCS99} to obtain the second-order necessary optimality conditions of Corollary \ref{CorClSecondOrder}.
 We show that these second-order conditions are equivalent with primal second-order conditions and are in general stronger than the one obtained with our first approach. However, they also require a stronger constraint qualification and their practical use is limited by the fact that we have not only one condition for every critical direction, but for every convex set contained in the second-order tangent set. If we further strengthen the constraint qualification to some {\em directional non-degeneracy condition},  this drawback vanishes and we can state a single condition involving the support function of the second-order tangent set.
 
Second-order optimality  {conditions} have in general the form that for every critical direction some  {conditions are} fulfilled. It seems to be that Penot \cite{Pen98} was the first who recognized that only some directional form of a constraint qualification (directional metric subregularity) is required for stating the necessary conditions. We pursue this approach and, as a byproduct of the second-order optimality conditions, we introduce and analyze a lot of new directional objects like the directional regular tangent cone, the directional Robinson's constraint qualification and directional non-degeneracy. These results are of its own interest.

We organize our paper as follows. Section 2 contains the preliminaries and preliminary results. In Sections 3 and 4, we derive the primal and dual form of second-order necessary optimality conditions, respectively. Section 5 discusses second-order sufficient conditions for optimality. In Section 6 we present four examples which illustrate our second-order necessary and sufficient conditions.

 \section{Preliminaries and preliminary results}
In this section we  clarify the notations,  recall some background material we need from variational analysis and
 develop some preliminary results.

 The  unit sphere in $\Re^n$ is  denoted by $\mathbb{S}$ and the open and closed unit balls  are denoted by $\mathbb{B}$ and ${\cal B}$ respectively. For a set ${ C}$,  denote by $\inn C$, $\ri C$, $\cl C$, $\bd C$, $\co C$, 
{$C^{\rm comp}$ its interior, relative interior}, closure, boundary,  convex hull, and its complement, respectively. For a closed set $C\subseteq \Re^n$, let
$C^\circ$ and $\sigma_C(x)$ or $\sigma(x|C)$ stand for the polar cone and the support function of $C$, respectively, i.e.,
 $C^\circ:=\{v|\ \langle v,w \rangle\leq 0,\  \forall w\in C \}$
 and $\sigma_C(x)=\sigma(x|C):=\sup\{\langle x,x' \rangle|x'\in C\}$ for $x\in \Re^n$.  {Let  $ o(\lambda):\mathbb{R}_+\rightarrow \mathbb{R}^m$ stand for} a mapping with the property that $o(\lambda)/\lambda\rightarrow 0$ when $\lambda \downarrow 0$.  $z\overset{S}{\to} x$ means $z\in S$ and $z\rightarrow x$.
  For a   mapping $\Phi:\mathbb{R}^n\to \mathbb{R}^m$,  we denote by
 $\nabla \Phi(x)\in \mathbb{R}^{m\times n}$ the Jacobian and  by $\nabla^2 \Phi(x)$ the second order derivative as defined by
 $$u^T\nabla ^2 \Phi(x) :=\lim_{t\rightarrow 0} \frac{\nabla \Phi( x+ tu)-\nabla \Phi( x)}{t} \quad \forall u \in \mathbb{R}^n.$$
 Hence, for a scalar mapping $f:\mathbb{R}^n\to \mathbb{R}$, $\nabla^2 f(x)$ can be identified with the Hessian and for a mapping $\Phi:\mathbb{R}^n\to \mathbb{R}^m$, we have
 $$\nabla^2 \Phi(x)(d,d):=d^T \nabla^2 \Phi(x) d=(d^T \nabla^2 \Phi_1(x)d, \dots, d^T \nabla^2 \Phi_m(x)d)^T \quad \forall d\in \mathbb{R}^n.$$

Let $\Phi: \mathbb{R}^n \rightrightarrows \mathbb{R}^m$ be a set-valued mapping. We denote by $\limsup_{x'\rightarrow x}\Phi(x')$ and $\liminf_{
x'\rightarrow x}\Phi(x')$ the
 Painlev\'{e}-Kuratowski upper and lower limit, i.e.,
\begin{eqnarray*}
& & \limsup_{x'\rightarrow x}\Phi(x) :=\left  \{v\in \mathbb{R}^m\Big| \exists x_k \rightarrow x, v_k\rightarrow v \mbox{ with } v_k\in \Phi(x_k) \right\}\\
   && \liminf_{x'\rightarrow x}\Phi(x):=\left  \{v\in \mathbb{R}^m\Big| \forall  x_k \rightarrow x, \exists v_k\rightarrow v \mbox{ with } v_k\in \Phi(x_k) \right\},
\end{eqnarray*}
respectively.

 \begin{definition}[Tangent cones] \cite[Definitions 2.54 and 3.28]{BR05} Given $S\subseteq \mathbb{R}^n$ and $x\in S$, the regular/Clarke and
 (Bouligand-Severi) {\em tangent/contingent cone} to $S$ at $x$ are defined respectively by
 \begin{eqnarray*}
  \widehat{T}_S(x)&:=& { \liminf\limits_{{x' \stackrel{S}{\to}x} \atop {t\downarrow 0}}\frac{S-x'}{t}=\Big\{ d\in \Re^n\,\Big|\, \forall \, t_k\downarrow 0,\, {x_k \stackrel{S}{\to}x}, \; \exists d_k\to d \
  \ {\rm with}\ \ x_k+t_kd_k\in S  \Big\}},\\
 T_S(x)&:=& \limsup\limits_{t\downarrow 0}\frac{S-x}{t}
 = \Big\{d\in\Re^n \, \Big| \, \exists \ t_k\downarrow 0,\;d_k\to d \ \ {\rm with}
 \ \ x+t_k d_k\in S \Big\}.
 \end{eqnarray*}
For $x\in S$ and $d\in T_S(x)$,
the  outer second-order tangent set to $S$ in the direction $d$ is defined by
  \begin{eqnarray*}
 T_S^2(x; d) &:=&\limsup\limits_{t\downarrow 0}\frac{S-x-td}{\frac{1}{2}t^2}\\
& =& \left \{ w \in \Re^n \, | \, \exists \ t_k \downarrow 0, v_k \rightarrow w \
 {\rm such \ that}\  x+t_kd+\frac{1}{2}t^2_k v_k \in S
  \right \}.
 \end{eqnarray*}
 \end{definition}

 Alternatively, the contingent  cone and the second-order tangent set  can be written in the form
 \begin{eqnarray}
 T_S(x)&=&  \Big\{d\in\Re^n \, \big| \, \exists \ t_k\downarrow 0, {\rm dist}(x+t_kd, S)=o(t_k) \Big\},\label{dist1}\\
 T_S^2(x;d)&=&   \Big\{{w}\in\Re^n \, \big| \, \exists \ t_k\downarrow 0, {\rm dist}(x+t_kd+\frac{1}{2}t_k^2 w, S)=o(t_k^2) \Big\}, \label{dist2}
 \end{eqnarray}
 respectively; see \cite[(2.87) and (3.50)]{BS}.
 The regular tangent cone is always a closed convex cone.  The tangent cone is always a closed cone and it is a closed convex cone provided that the set $S$ is convex. However the outer second-order tangent set  may be a non-convex set even when the set $S$ is convex (see \cite[Example 3.35]{BS}).  While the tangent cone contains zero always, the second-order tangent set may not be a cone and it may be empty (see e.g. \cite[Example 3.29]{BS}).


We now introduce a concept of directional regular/Clarke tangent  cone which we will need later.
{The following definition is motivated by the formula
$\widehat T_S(x)=\displaystyle \liminf_{x' \overset{S}  \to x} T_S(x'),$
whenever $S$ is locally closed at $x$, cf. \cite[Theorem 6.26]{RW98}.}
 \begin{definition}[Directional regular/Clarke tangent cone] Given $S\subseteq \mathbb{R}^n$,  $x \in S$ and $d \in \mathbb{R}^n$, the regular/Clarke tangent cone to $S$ at $x$ in direction $d$ is defined by
 \begin{eqnarray*}
 \lefteqn{ \widehat{T}_{S}(x;d):=}
&&\qquad \qquad  \liminf\limits_{{t\downarrow 0, d'\to d}\atop {{x}+td'\in S}}T_S({x}+td')\\
&&\qquad =\Big\{v\in \Re^n \big| \, \forall t_k\downarrow 0, d_k \to d,   x+t_k d_k \in S, \exists v_k\rightarrow v \mbox{ with } v_k\in T_S( x+t_k d_k) \Big\}.
 \end{eqnarray*}
\end{definition}

It is easy to see from definition that for a closed set $S$ the directional version of the regular tangent cone contains the non-directional one and it coincides with the non-directional one when the direction is equal to zero, i.e., $\widehat{T}_{S}(x;d)\supseteq \widehat{T}_{S}(x)$  and $\widehat{T}_{S}(x;0)=\widehat{T}_{S}(x)$.

\if{
 \begin{proposition}\label{Prop2.2}
 Given a set $S\subseteq \mathbb{R}^n$ and $x\in S$, one has \begin{eqnarray*}T_S(x)+\widehat{T}_S(x)=T_S(x).\end{eqnarray*}
For any $x\in S$ and
$d\in T_S(x)$, one has
{$$T_S^2(x; d)+\widehat T_S(x;d) = T_S^2(x; d).$$}
\end{proposition}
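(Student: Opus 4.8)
The plan is to prove both identities by establishing the two inclusions in each case; since the reverse inclusions ($T_S(x) \subseteq T_S(x) + \widehat T_S(x)$ and $T_S^2(x;d) \subseteq T_S^2(x;d) + \widehat T_S(x;d)$) are trivial because $0 \in \widehat T_S(x)$ and $0 \in \widehat T_S(x;d)$, the real content lies in the forward inclusions, which say that tangent (resp. second-order tangent) directions are stable under adding directional-regular-tangent directions.

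For the first identity, I would take $d \in T_S(x)$ and $v \in \widehat T_S(x)$ and aim to show $d + v \in T_S(x)$. By the characterization via distances (\ref{dist1}), there is a sequence $t_k \downarrow 0$ with $\dist(x + t_k d, S) = o(t_k)$; pick $x_k \in S$ with $\norm{x_k - (x + t_k d)} = o(t_k)$, so that $x_k \stackrel{S}{\to} x$ and $(x_k - x)/t_k \to d$. Now invoke the definition of the \emph{regular} tangent cone $\widehat T_S(x)$ as a \emph{liminf}: since $x_k \stackrel{S}{\to} x$ and $t_k \downarrow 0$, there exist $v_k \to v$ with $x_k + t_k v_k \in S$. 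Then $x + t_k(d + v) = (x_k + t_k v_k) + \big((x + t_k d) - x_k\big) = (x_k + t_k v_k) + o(t_k)$, so $\dist(x + t_k(d+v), S) = o(t_k)$, giving $d + v \in T_S(x)$ again by (\ref{dist1}). The key mechanism is that the "for all approaching sequences" quantifier in the definition of $\widehat T_S(x)$ lets us feed in precisely the sequence $(x_k, t_k)$ produced by $d$.

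For the second identity the argument is structurally identical but one order higher. Take $w \in T_S^2(x;d)$ and $v \in \widehat T_S(x;d)$; by (\ref{dist2}) choose $t_k \downarrow 0$ with $\dist(x + t_k d + \tfrac12 t_k^2 w, S) = o(t_k^2)$, and pick $x_k \in S$ realizing this distance up to $o(t_k^2)$. Writing $x_k = x + t_k d_k + \tfrac12 t_k^2 w_k$ with $d_k \to d$ (indeed $\norm{x_k - x - t_k d} = O(t_k^2) = o(t_k)$, so $(x_k - x)/t_k \to d$) and $w_k \to w$, we have $t_k \downarrow 0$, $d_k \to d$, $x + t_k d_k \in S$, so the definition of $\widehat T_S(x;d)$ yields $v_k \to v$ with $v_k \in T_S(x + t_k d_k) = T_S(x_k)$. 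Then $x_k \in S$ together with $v_k \in T_S(x_k)$ gives, via (\ref{dist1}) applied at the point $x_k$, a way to move from $x_k$ in direction $v_k$ staying $o(\cdot)$-close to $S$; the technical point is to choose the inner step size proportional to $t_k$ so that $x_k + \tfrac12 t_k^2 (2 v_k) $-type corrections land within $o(t_k^2)$ of $S$, yielding $\dist(x + t_k d + \tfrac12 t_k^2(w + v), S) = o(t_k^2)$ and hence $w + v \in T_S^2(x;d)$.

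The main obstacle is the last step of the second-order case: unlike the first-order situation, knowing $v_k \in T_S(x_k)$ only gives a sequence of scales along which $x_k$ moves toward $S$ in direction $v_k$, and one must reconcile these "inner" scales with the "outer" scale $t_k$ so that the combined displacement is $o(t_k^2)$ relative to $S$. I would handle this by a diagonalization: for each $k$ use the definition of $T_S(x_k)$ (or, more cleanly, the fact that $v_k \in T_S(x_k)$ means $\dist(x_k + s v_k, S) = o(s)$ along some $s \downarrow 0$) to select an inner parameter $s = s_k$ small enough — in particular $s_k$ comparable to $t_k^2$ after rescaling — that the error is absorbed, then relabel. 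One should double check that $S$ being locally closed at $x$ (which is implicit, and needed for the cited identity $\widehat T_S(x) = \liminf_{x' \to x} T_S(x')$ to even make the directional definition sensible) is used only to guarantee the distance-realizing points $x_k$ exist; no convexity of $S$ is required anywhere, which is the whole point of stating the proposition in this generality.
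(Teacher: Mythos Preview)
Your argument for the first identity is correct and is actually a clean direct proof: because $\widehat T_S(x)$ is defined as a $\liminf$ of difference quotients $(S-x')/t$ along \emph{points} $x'\in S$, membership $v\in\widehat T_S(x)$ directly produces points $x_k+t_kv_k\in S$, and the rest is arithmetic.

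The second identity, however, has a genuine gap precisely at the step you flag as ``the main obstacle''.  The directional regular tangent cone $\widehat T_S(x;d)$ is defined as a $\liminf$ of \emph{tangent cones} $T_S(x+td')$, not of difference quotients.  So from $v\in\widehat T_S(x;d)$ you only obtain $v_k\in T_S(x_k)$; this tells you that for each $k$ there is \emph{some} null sequence $s^{(k)}_j\downarrow 0$ with $\dist(x_k+s^{(k)}_j v_k,S)=o(s^{(k)}_j)$.  Your diagonalization proposes to pick $s_k$ ``comparable to $t_k^2$'', but nothing guarantees that the witnessing sequence $s^{(k)}_j$ contains any term of order $t_k^2$: it could, for instance, consist entirely of terms that are $o(t_k^2)$, in which case the contribution $s_kv_k$ is swallowed by the $o(t_k^2)$ error and never moves you from $w$ to $w+v$ at the second-order scale.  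Relabelling the outer parameter does not help either, since forcing $\tfrac12\tau_k^2=s_k$ destroys the relation $x_k=x+\tau_k d+O(\tau_k^2)$.

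The paper closes exactly this gap, but by a different route: it argues by contradiction.  If $w+v\notin T_S^2(x;d)$, one gets $\epsilon>0$ and points $x_k=x+t_kd+\tfrac12 t_k^2 w_k\in S$ with $\big(x_k+\tau_k(v+\epsilon\mathbb B)\big)\cap S=\emptyset$ for $\tau_k=\tfrac12 t_k^2$.  Then one invokes the (nontrivial) lemma behind \cite[Theorem~6.26]{RW98}: this produces $\tilde x_k\in S$ with $\norm{\tilde x_k-x_k}\leq\tau_k(\norm v+\epsilon)$ and $\dist\big(v,T_S(\tilde x_k)\big)\geq\epsilon$.  Since $\tilde x_k=x+t_k d_k$ with $d_k\to d$, this contradicts $v\in\widehat T_S(x;d)$.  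The point is that this RW98 lemma is precisely what converts the failure of an approximation at the \emph{prescribed} scale $\tau_k$ into information about tangent cones at nearby points---the missing ingredient in your direct approach.
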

\begin{proof}  We omit the proof for the first equation since the proof is similar.
{The inclusion $\supseteq$ in the second equation is clear, since $0\in \widehat{T}_S(x;d)$}. Conversely, let $ w\in T_S^2(x;d), v\in \widehat T_S(x;d)$ and we prove that $w+v \in T_S^2(x; d)$ by contradiction. To the contrary, suppose that $w+v \not \in T_S^2(x; d)$. Then there exists $\epsilon>0, t_k\downarrow 0,w_k\rightarrow w$ such that $x +t_k d+\frac{1}{2} t_k^2 w_k \in S$ and
$$ {\rm dist}(x +t_k d+\frac{1}{2} t_k^2 (w_k+v), S)\geq \frac{1}{2} t_k^2\epsilon .$$ Denote by  ${x}_k:=x +t_k d+\frac{1}{2} t_k^2 w_k, \tau_k:=\frac{1}{2} t_k^2$,
the above inequality is
 equivalent to saying that
$$ \left ( x_k +\tau_k(v+\epsilon \mathbb{B})\right )\cap S=\emptyset.$$ By the proof of \cite[Theorem 2.26]{RW98}, there exists $\tilde{x}_k\in S\cap  ( x_k +\tau_k(\|v\|+\epsilon )\mathbb{B})$ such that
\begin{eqnarray*}
&& {\rm dist} (v, T_S(\tilde{x}_k)) \geq \epsilon.
\end{eqnarray*}
By definition of the directional regular tangent cone, the above  implies that $v\not \in \widehat{T}_S(x;d)$. But this is a contradiction and so the proof is complete.
\end{proof}
}\fi
We now derive some properties of first and second-order tangent sets. The formula for the second-order tangent set extends the one in \cite[Proposition 13.12]{RW98}.
\begin{proposition}\label{PropRegTanCone}
 Given a closed set $S\subseteq \mathbb{R}^n$, for every $x\in S$ and every
$d\in T_S(x)$ one has
\[T_{T_S(x)}(d)+\widehat T_S(x;d)=T_{T_S(x)}(d),\quad T_S^2(x; d)+\widehat T_S(x;d) = T_S^2(x; d).\]
\end{proposition}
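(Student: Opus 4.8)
The plan is to prove both equalities by the same contradiction scheme. In each of them the inclusion ``$\supseteq$'' is immediate, since $0\in\widehat T_S(x;d)$, so $T_{T_S(x)}(d)+\widehat T_S(x;d)\supseteq T_{T_S(x)}(d)$ and likewise $T_S^2(x;d)+\widehat T_S(x;d)\supseteq T_S^2(x;d)$. For ``$\subseteq$'' the engine will be a quantitative version of the nontrivial inclusion behind the formula $\widehat T_S(x)=\liminf_{x'\to_S x}T_S(x')$ of \cite[Theorem 6.26]{RW98}, used here as a lemma: \emph{if $C\subseteq\R^n$ is closed, $a\in C$, $\tau>0$, $\epsilon>0$ and $\dist(a+\tau v,C)\ge\tau\epsilon$, then there is $\tilde a\in C$ with $\norm{\tilde a-a}\le 2\tau\norm v$ and $\dist(v,T_C(\tilde a))\ge\epsilon$.} (Indeed, if one had $\dist(v,T_C(y))<\epsilon$ for every $y\in C\cap(a+2\tau\norm v\,\mathbb{B})$, one could ``integrate the contingent cone'': run an arc inside $C$ issuing from $a$ with velocities $\epsilon$-close to $v$, and after parameter $\tau$ reach a point of $C$ within $\tau\epsilon$ of $a+\tau v$, a contradiction.)

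For the second equality take $w\in T_S^2(x;d)$, $v\in\widehat T_S(x;d)$ and suppose $w+v\notin T_S^2(x;d)$. From $w\in T_S^2(x;d)$ there are $t_k\downarrow0$ and $w_k\to w$ with $x_k:=x+t_kd+\tfrac12 t_k^2 w_k\in S$; from $w+v\notin T_S^2(x;d)$ and \eqref{dist2}, and because $\tfrac12 t_k^2\norm{w_k-w}=o(t_k^2)$, there is $\epsilon>0$ with $\dist(x_k+\tau_k v,S)\ge\epsilon\,\tau_k$ for large $k$, where $\tau_k:=\tfrac12 t_k^2$. Applying the lemma with $a=x_k$, $C=S$ gives $\tilde x_k\in S$ with $\norm{\tilde x_k-x_k}=O(\tau_k)=O(t_k^2)$ and $\dist(v,T_S(\tilde x_k))\ge\epsilon$. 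Then $\tilde x_k=x+t_kd+O(t_k^2)=x+t_k d_k'$ with $d_k'\to d$ and $\tilde x_k\in S$, so the definition of $\widehat T_S(x;d)$ yields $v_k'\to v$ with $v_k'\in T_S(\tilde x_k)$, i.e.\ $\dist(v,T_S(\tilde x_k))\to0$, a contradiction.

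The first equality follows the same pattern but needs one extra ``descent'' layer: the relevant ambient set is now the cone $T_S(x)$, whereas $v\in\widehat T_S(x;d)$ is a statement about $S$, and $\tfrac1t(S-x)$ is \emph{not} contained in $T_S(x)$ (it only converges to it), so one must pass from $T_S(x)$ back to $S$. Let $w\in T_{T_S(x)}(d)$, $v\in\widehat T_S(x;d)$, suppose $w+v\notin T_{T_S(x)}(d)$. Since $T_S(x)$ is a cone and $d\in T_S(x)$, \eqref{dist1} applied to $T_S(x)$ at $d$ gives $\mu_k\downarrow0$, $w_k\to w$ with $d+\mu_k w_k\in T_S(x)$ and $\epsilon_0>0$ with $\dist(d+\mu_k(w+v),T_S(x))\ge\epsilon_0\mu_k$ for large $k$. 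Combining the elementary estimate $\dist(z,T_S(x))\le\liminf_{\nu\downarrow0}\tfrac1\nu\dist(x+\nu z,S)$ (a direct consequence of $T_S(x)=\limsup_{t\downarrow0}\tfrac1t(S-x)$) applied to $z=d+\mu_k(w+v)$, with \eqref{dist1} for $S$ at $x$ in the direction $d+\mu_k w_k\in T_S(x)$, and choosing $\nu_k\downarrow0$ diagonally small relative to $k$ (legitimate because $\mu_k$ is a fixed positive number once $k$ is fixed, so the $o(\nu_k)$ remainders get swamped by the scale $\mu_k\nu_k$), one produces $\hat x_k\in S$ with $\hat x_k=x+\nu_k d+O(\mu_k\nu_k)$ and $\dist(\hat x_k+\tau_k v,S)\ge c\,\tau_k$ for some $c>0$, where $\tau_k:=\mu_k\nu_k\downarrow0$. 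Applying the lemma at $\hat x_k\in S$ gives $\tilde x_k\in S$ with $\norm{\tilde x_k-\hat x_k}=O(\tau_k)$ and $\dist(v,T_S(\tilde x_k))\ge c$; writing $\tilde x_k=x+\nu_k d+O(\mu_k\nu_k)=x+\nu_k d_k'$ with $d_k'\to d$, the definition of $\widehat T_S(x;d)$ forces $\dist(v,T_S(\tilde x_k))\to0$, contradicting $\dist(v,T_S(\tilde x_k))\ge c>0$.

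The main obstacle I anticipate is this first equality, and inside it the nested-scale bookkeeping: the three scales $\mu_k\nu_k\ll\nu_k\ll1$ must be played off against the $o(\cdot)$ remainders in \eqref{dist1}, which become negligible only after the diagonal choice of $\nu_k$; and at bottom one must turn the set limit defining $T_S(x)$ into the one-sided estimate above, which is what transfers ``far from $T_S(x)$'' into ``far from $S$'' at the correct rate $\mu_k\nu_k$. The flow lemma itself is standard and can be quoted from \cite[Theorem 6.26]{RW98} and its proof, and with it the second equality is simply the directional version of \cite[Proposition 13.12]{RW98}.
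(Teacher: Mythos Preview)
Your proposal is correct and follows essentially the same route as the paper: the ``$\supseteq$'' inclusions via $0\in\widehat T_S(x;d)$, the ``$\subseteq$'' inclusions by contradiction using the flow lemma extracted from the proof of \cite[Theorem 6.26]{RW98}, and for the first equality the two-layer descent $T_{T_S(x)}(d)\to T_S(x)\to S$ with a diagonal choice of scales $\mu_k\nu_k\ll\nu_k$. The paper organises the first equality slightly differently (it derives $\dist(d+t(w+v),T_S(x))\ge 4\epsilon t$ for \emph{all} small $t$ and then $\dist(x+\alpha(d+t(w+v)),S)\ge 3\epsilon\alpha t$ for all small $\alpha$, before picking the sequences), but the content is identical to your liminf estimate plus diagonalisation. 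One small correction: the radius in your lemma should be $\tau(\norm{v}+\epsilon)$ rather than $2\tau\norm v$ (this is what the argument in \cite[Theorem 6.26]{RW98} actually yields, and what the paper quotes); your bound fails when $\epsilon>\norm v$, but since only $\norm{\tilde a-a}=O(\tau)$ is used downstream this does not affect the proof.
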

\begin{proof}
The inclusion $\supseteq$ in both equations is clear, since  $0\in \widehat{T}_S(x;d)$. In order to show the inclusion $\subseteq$ in the first equation, consider $w\in T_{T_S(x)}(d)$ and $v\in \widehat T_S(x;d)$ and we prove that $w+v \in T_{T_S(x)}(d)$ by contradiction. To the contrary, suppose that $w+v \not \in T_{T_S(x)}(d)$. Then by virtue of (\ref{dist1}) there is some $\epsilon>0$ and some $\bar t>0$ such that
\[\dist\big(d+t(w+v),T_S(x)\big)\geq 4\epsilon t\quad \forall t\in (0,\bar t).\]
Consequently, $d+t(w+v) \not \in T_S(x)$ and hence for every $t\in(0,\bar t)$ there is some $\bar\alpha_t>0$ with
\[\dist\big(x+\alpha(d+t(w+v)),S\big)\geq 3\epsilon\alpha t\quad \forall\alpha\in (0,\bar\alpha_t).\]
Since $w\in T_{T_S(x)}(d)$, by definition  there are sequences $t_k \downarrow 0$ and $w_k\to w$ such that $d+t_kw_k\in T_S(x)$ for all $k$. Thus, for every $k$ there is a sequence $\alpha^k_i\downarrow 0$ and {$d^k_i\to d$},
as $i\to\infty$ satisfying $x+\alpha^k_i({d^k_i+t_kw_k})\in S$ for all $i$. For every $k$ sufficiently large we have $t_k<\bar t$, $\norm{w_k-w}<\epsilon$ and we can find some index $i(k)$ such that $\alpha^k_{i(k)}<\min\{\frac 1k,\bar\alpha_{t_k}\}$ and $\norm{d^k_{i(k)}-d}<\epsilon t_k$. It follows together with Lipschitz property of the distance function that
\begin{eqnarray*}\lefteqn{\dist\big(x+\alpha^k_{i(k)}(d^k_{i(k)}+t_k(w_k+v)),S\big)}\\
&\geq& \dist\big(x+\alpha^k_{i(k)}(d+t_k(w+v)),S\big)-\alpha^k_{i(k)}(\norm{d^k_{i(k)}-d}+t_k\norm{w_k-w})\\
&>& 3\epsilon \alpha^k_{i(k)}t_k-2\epsilon \alpha^k_{i(k)}t_k=\epsilon\alpha^k_{i(k)}t_k\end{eqnarray*}
implying
\[ \left ( x_k +\tau_k(v+\epsilon \mathbb{B})\right )\cap S=\emptyset\]
with $x_k:=x+\alpha^k_{i(k)}(d^k_{i(k)}+t_kw_k)\in S$ and $\tau_k:=\alpha^k_{i(k)}t_k$. By the proof of \cite[Theorem 6.26]{RW98}, there exists $\tilde{x}_k\in S\cap  ( x_k +\tau_k(\|v\|+\epsilon )\mathbb{B})$ such that
\begin{eqnarray}
&& {\rm dist} (v, T_S(\tilde{x}_k)) \geq \epsilon.\label{dist3}
\end{eqnarray}
Since
\begin{eqnarray*}\norm{\tilde x_k-(x+\alpha^k_{i(k)}d)}&\leq& \norm{\tilde x_k-x_k}+\norm{x_k-(x+\alpha^k_{i(k)}d)}\\
&\leq& \alpha^k_{i(k)}\Big(t_k(\norm{v}+\epsilon)+\norm{d^k_{i(k)}-d}+t_k\norm{w_k}\Big)\\
&=&o(\alpha^k_{i(k)}),\end{eqnarray*}
we have $\tilde x_k=x+\alpha^k_{i(k)}d_k$ with some sequence $d_k \rightarrow d$.
Together with (\ref{dist3}) this  implies that $v\not \in \widehat{T}_S(x;d)$ which contradicts the assumption that $v \in \widehat{T}_S(x;d)$ and hence we have proved that $w+v\in T_{T_S(x)}(d)$. Indeed, to the contrary if $v \in \widehat{T}_S(x;d)$, then  by definition of the directional regular tangent cone, for the sequence $\alpha^k_{i(k)}\downarrow 0, d_k\rightarrow d$, $\tilde x_k\in S$, there must exists a sequence $v_k\rightarrow v$ with $v_k \in T_S(\tilde x_k)$, contradicting (\ref{dist3}).

We show the inclusion $\subseteq$ in the second equation in a similar way.
Let $ w\in T_S^2(x;d), v\in \widehat T_S(x;d)$ and we prove that $w+v \in T_S^2(x; d)$ by contradiction. To the contrary, suppose that $w+v \not \in T_S^2(x; d)$. Then by virtue of (\ref{dist2}) there exists $\epsilon>0, t_k\downarrow 0,w_k\rightarrow w$ such that $x +t_k d+\frac{1}{2} t_k^2 w_k \in S$ and
$$ {\rm dist}(x +t_k d+\frac{1}{2} t_k^2 (w_k+v), S)\geq \frac{1}{2} t_k^2\epsilon .$$
Denote by  ${x}_k:=x +t_k d+\frac{1}{2} t_k^2 w_k, \tau_k:=\frac{1}{2} t_k^2$,
the above inequality is  equivalent to saying that
$$ \left ( x_k +\tau_k(v+\epsilon \mathbb{B})\right )\cap S=\emptyset.$$
Now we can proceed similar as before to obtain the contradiction $v\not\in \widehat{T}_S(x;d)$.
\end{proof}

\begin{definition}[Normal Cones]  \label{NormalCone}(See e.g. \cite{Mor})  Given $S\subseteq \mathbb{R}^n$ and $x\in S$, the regular/Fr\'echet  normal cone to $S$ at $x$  is given by
 \[
 \widehat{N}_S(x):=\left\{v\in \Re^n \, \Big| \, \langle v, {x'}-x\rangle \le
 o\big(\|x'-x\|\big), \ \forall x'\in S\right\};
 \]
the  limiting/Mordukhovich normal cone to $S$ at $x$  is
 defined as
 \[
 N_S(x):=\limsup\limits_{x'\overset{S}{\to} x}\widehat{N}_S(x'),
 \]
and the Clarke normal cone to $S$ at $x$ is $N^c_S(x):=\cl\co N_S(x).$

 \end{definition}
 The limiting normal cone is in general non-convex whereas the Fr\'{e}chet normal cone is always convex. In the case of a convex set $S$, both the Fr\'{e}chet  normal cone and the limiting normal cone coincide with the normal cone in the sense of convex analysis, i.e.,  $$ N_S(x):=\left \{ v\in \Re^n \, \big | \, \langle v, {x'}-x\rangle \le
0, \  \forall x'\in S\right  \} .$$

Recently a directional version of limiting normal cones were introduced in \cite{GM} and extended to   general Banach spaces in \cite{Gfr13a}.
\begin{definition}[Directional Limiting Normal Cones]  Given a set
$S\subseteq\mathbb R^n$, a point $x\in S$
and  a direction $d\in \mathbb{R}^{n}$, the limiting normal  cone to $S$ in  direction $d$ at $x$ is defined by
\[
N_{S}(x; d):=\limsup\limits_{{t\downarrow 0, d'\to d}}\widehat{N}_S(x+td')=\left \{v | \exists t_{k}\downarrow 0, d_{k}\rightarrow d, v_{k}\rightarrow v \ {\rm with }\  v_{k}\in \widehat{N}_{S}(x+ t_{k}d_{k}) \right \}.
\]
\end{definition}
From definition, it is obvious that $N_{S}(x; d)=\emptyset$ if $d \not \in T_S(x)$,  $N_{S}(x;d)\subseteq N_S(x)$, and $N_{S}(x; 0)=N_S(x)$. {When $S$ is convex and $d\in T_S(x)$ there holds
\begin{equation}\label{EqDirLimNormalConeConvex}
  N_S(x; d)= N_S(x)\cap \{d\}^\perp = N_{T_S(x)}(d),
\end{equation}
cf. \cite[Lemma 2.1]{Gfr14a}.}
The following result is the directional counterpart of the fact that the limiting normal cone mapping is outer semicontinuous (see e.g.\cite[Proposition 6.6]{RW98}).
\begin{proposition}\label{LemRobustDirNormalCone}Given a set
$S\subseteq\mathbb R^n$, a point $x\in S$
and  a direction $d\in \mathbb{R}^{n}$, one has
  \[
N_{S}(x; d)=\limsup\limits_{{t\downarrow 0, d'\to d}}N_S(x+td')=\Big \{v | \exists t_{k}\downarrow 0, d_{k}\rightarrow d, v_{k}\rightarrow v \ {\rm with }\  v_{k}\in N_{S}(x+ t_{k}d_{k}) \Big \}.
\]
\end{proposition}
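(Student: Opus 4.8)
The plan is to prove the two inclusions separately. Write $\Sigma$ for the set on the right-hand side, i.e.\ $\Sigma:=\{v\mid \exists\,t_k\downarrow 0,\ d_k\to d,\ v_k\to v\ \text{with}\ v_k\in N_S(x+t_kd_k)\}$; by the definition of the Painlev\'e--Kuratowski upper limit this set equals $\limsup_{t\downarrow 0,\,d'\to d}N_S(x+td')$, so it suffices to show $N_S(x;d)=\Sigma$.

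The inclusion $N_S(x;d)\subseteq\Sigma$ is immediate: since $\widehat N_S(y)\subseteq N_S(y)$ for every $y\in S$, any triple of sequences $t_k\downarrow 0$, $d_k\to d$, $v_k\to v$ realizing a point of $N_S(x;d)=\limsup_{t\downarrow 0,\,d'\to d}\widehat N_S(x+td')$ (so that $v_k\in\widehat N_S(x+t_kd_k)$) also realizes it as a point of $\Sigma$.

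For the reverse inclusion $\Sigma\subseteq N_S(x;d)$, I would fix $v\in\Sigma$ and pick $t_k\downarrow 0$, $d_k\to d$ and $v_k\to v$ with $v_k\in N_S(x_k)$, where $x_k:=x+t_kd_k$; note $x_k\in S$ (otherwise $N_S(x_k)=\emptyset$) and $x_k\to x$. By the very definition of the limiting normal cone, $N_S(x_k)=\limsup_{x'\overset{S}{\to}x_k}\widehat N_S(x')$, so for each fixed $k$ there are sequences $x_{k,j}\overset{S}{\to}x_k$ and $v_{k,j}\to v_k$ as $j\to\infty$ with $v_{k,j}\in\widehat N_S(x_{k,j})$. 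Now I run a diagonal argument: since $t_k>0$, for each $k$ choose an index $j(k)$ large enough that $\|x_{k,j(k)}-x_k\|\le t_k/k$ and $\|v_{k,j(k)}-v_k\|\le 1/k$. Set $\widehat t_k:=t_k$ and $\widehat d_k:=d_k+(x_{k,j(k)}-x_k)/t_k$. Then $x+\widehat t_k\widehat d_k=x_{k,j(k)}\in S$, $\widehat t_k\downarrow 0$, and $\|\widehat d_k-d_k\|\le 1/k\to 0$ so $\widehat d_k\to d$; moreover $v_{k,j(k)}\in\widehat N_S(x+\widehat t_k\widehat d_k)$ and $v_{k,j(k)}\to v$. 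Hence $v\in\limsup_{t\downarrow 0,\,d'\to d}\widehat N_S(x+td')=N_S(x;d)$, which gives $\Sigma\subseteq N_S(x;d)$ and finishes the proof.

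The only point where I would be careful is the rescaling $\widehat d_k:=d_k+(x_{k,j(k)}-x_k)/t_k$: one must choose $j(k)$ so that the perturbation $x_{k,j(k)}-x_k$ is small not merely in absolute value but \emph{relative to $t_k$}, so that the perturbed directions $\widehat d_k$ still converge to $d$; the division by $t_k$ is legitimate because $t_k>0$ for all $k$. This is precisely the directional analogue of the standard diagonalization used to show that the limiting normal cone mapping is outer semicontinuous, cf.\ \cite[Proposition 6.6]{RW98}. If $d\notin T_S(x)$, then both sides are empty and there is nothing to prove.
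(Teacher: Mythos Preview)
Your proof is correct and follows essentially the same approach as the paper's own argument: both directions are handled identically, and your diagonal construction (choosing $j(k)$ with $\|x_{k,j(k)}-x_k\|\le t_k/k$ and $\|v_{k,j(k)}-v_k\|\le 1/k$, then setting $\widehat d_k=d_k+(x_{k,j(k)}-x_k)/t_k$) is exactly the paper's, up to notation. Your added remarks about why $x_k\in S$ and about the trivial case $d\notin T_S(x)$ are fine extra commentary.
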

\begin{proof}
  The inclusion $N_{S}(x; d)\subseteq\limsup\limits_{{t\downarrow 0, d'\to d}}N_S(x+td')$ follows easily from the fact that for every $x'$ we have $\widehat N_S(x')\subseteq N_S(x')$. In order to show the reverse inclusion, consider sequences $t_k\downarrow 0$, $d_k\to d$ and $v_k\to v$ with $v_k\in N_S(x+t_kd_k)$. By the definition of limiting normals, for every $k$ there exist sequences $x_k^i\to x+t_kd_k$ and $v_k^i\to v_k$ as $i\to\infty$ with $v_k^i\in \widehat N_S(x_k^i)$. Using a standard diagonal process, for every $k$ we can find some index $i(k)$ satisfying
  \[\norm{x_k^{i(k)}-(x+t_kd_k)}\leq \frac{t_k}k,\ \norm{v_k^{i(k)}-v_k}\leq \frac 1k.\]
  Setting $d_k':= (x_k^{i(k)}-x)/t_k$, it follows that $\norm{d_k'-d_k}\leq \frac 1k$ and consequently $d_k'\to d$. Since $\lim_{k\to\infty} v_k^{i(k)}=\lim_{k\to\infty} v_k= v$ and $v_k^{i(k)}\in \widehat N_S(x+t_kd_k')$, $v\in N_S(x;d)$ follows. Hence, the inclusion $N_{S}(x; d)\supseteq\limsup\limits_{{t\downarrow 0, d'\to d}}N_S(x+td')$ is also established and the proof is complete.
\end{proof}

From the definition of the Clarke normal cone in Definition \ref{NormalCone}, it is natural to define the directional Clarke normal cone as follows.
\begin{definition}[Directional Clarke Normal Cones]  Given a set
$S\subseteq\mathbb R^n$, a point $x\in S$
and  a direction $d\in \mathbb{R}^{n}$, the Clarke normal  cone to $S$ in  direction $d$ at $x$ is defined by
$$N^c_S(x;d):=\cl\co N_S(x;d).$$
\end{definition}
Similarly to the directional limiting normal cone, we also have $N_{S}^c(x; d)=\emptyset$ if $d \not \in T_S(x)$,  $N_{S}^c(x;d)\subseteq N_S^c(x)$ and $N_{S}^c(x; 0)=N_S^c(x)$.

Similar to the standard tangent-normal polarity (see \cite[Theorem 6.28]{RW98}, \cite{Clarke1983}), we have the following directional tangent-normal polarity.
 \begin{proposition}[Directional Tangent-Normal Polarity]\label{polarity}
 For a closed set $S$, $ x\in S$, and $d\in \Re^n$, one has $$\widehat{T}_S({x};d)=N_S({x};d)^\circ=N_S^c({x};d)^\circ, \quad \widehat{T}_S({x};d)^\circ=N_S^c({x};d).$$
{In particular, the directional regular tangent cone $\widehat{T}_S({x};d)$ is closed and convex.}
 \end{proposition}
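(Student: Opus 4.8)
The plan is to reduce all four assertions to the single identity $\widehat T_S(x;d)=N_S(x;d)^\circ$. Granting this, the rest is formal: $N_S(x;d)$ is a cone (it is an outer limit of the cones $\widehat N_S(\cdot)$), so the bipolar theorem gives $\widehat T_S(x;d)^\circ=(N_S(x;d)^\circ)^\circ=\cl\co N_S(x;d)=N_S^c(x;d)$ and, polars being insensitive to passing to the closed convex conic hull, $N_S^c(x;d)^\circ=N_S(x;d)^\circ=\widehat T_S(x;d)$; in particular $\widehat T_S(x;d)$ is a polar cone, hence closed and convex. The easy inclusion $\widehat T_S(x;d)\subseteq N_S(x;d)^\circ$ is immediate from the definitions: given $v$ in the left-hand side and $\eta\in N_S(x;d)$, pick $t_k\downarrow 0$, $d_k\to d$ and $\eta_k\to\eta$ with $\eta_k\in\widehat N_S(x+t_kd_k)$ (so $x+t_kd_k\in S$); the definition of $\widehat T_S(x;d)$ applied to this very sequence gives $v_k\to v$ with $v_k\in T_S(x+t_kd_k)$, and the standard polarity $\widehat N_S(y)=T_S(y)^\circ$ forces $\langle v_k,\eta_k\rangle\le 0$, hence $\langle v,\eta\rangle\le 0$. (If $d\notin T_S(x)$ then $N_S(x;d)=\emptyset$ and $\widehat T_S(x;d)=\R^n$, so the identity holds trivially; assume henceforth $d\in T_S(x)$.)

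The substance is the reverse inclusion $N_S(x;d)^\circ\subseteq\widehat T_S(x;d)$, which I would establish by contradiction. Suppose $v\in N_S(x;d)^\circ$ but $v\notin\widehat T_S(x;d)$. Negating the $\liminf$ in the definition of $\widehat T_S(x;d)$ produces $\epsilon>0$ and $y_k:=x+t_kd_k\in S$ with $t_k\downarrow 0$, $d_k\to d$ and $\dist(v,T_S(y_k))\ge\epsilon$ for all $k$; since $0\in T_S(y_k)$ this also gives $\norm v\ge\epsilon>0$. The geometric core is a forbidden-cone estimate: for each $k$ there is $\bar\tau_k>0$ with $y_k+\tau w\notin S$ for all $\tau\in(0,\bar\tau_k]$ and all $w$ with $\norm{w-v}\le\epsilon/4$ --- otherwise, passing to the limit along suitable $\tau_i\downarrow 0$ would exhibit a vector of $T_S(y_k)$ within distance $<\epsilon$ of $v$. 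Decomposing $s-y_k=\alpha v+u$ with $u\perp v$, this estimate forces $\norm u>\tfrac\epsilon4\alpha$ whenever $s\in S$, $\alpha>0$ and $\norm{s-y_k}\le\rho_k:=\bar\tau_k\norm v$; elementary algebra then yields $\langle v,s-y_k\rangle\le c\,\norm{s-y_k}$ for all such $s$, where $c:=\norm v^2/\sqrt{\norm v^2+\epsilon^2/16}<\norm v$.

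Now fix $\kappa\in(c,\norm v)$. The displayed bound says precisely that $y_k$ is a strict local minimizer of $s\mapsto -\langle v,s\rangle+\kappa\norm{s-y_k}+\delta_S(s)$. By Fermat's rule and the limiting-subdifferential sum rule (the term $\kappa\norm{\cdot-y_k}$ is globally Lipschitz), $0\in -v+\kappa\mathcal B+N_S(y_k)$, so $\eta_k:=v-b_k\in N_S(y_k)$ for some $\norm{b_k}\le\kappa$. Then $\langle v,\eta_k\rangle\ge\norm v^2-\kappa\norm v=\norm v(\norm v-\kappa)>0$ and $\norm{\eta_k}\le\norm v+\kappa$; passing to a subsequence, $\eta_k\to\eta$ with $\langle v,\eta\rangle>0$. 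Since $\eta_k\in N_S(x+t_kd_k)$ with $t_k\downarrow 0$, $d_k\to d$, Proposition \ref{LemRobustDirNormalCone} gives $\eta\in N_S(x;d)$, which contradicts $v\in N_S(x;d)^\circ$.

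The main obstacle is exactly the reverse inclusion, and within it the requirement that the normals $\eta_k$ pair with $v$ bounded away from $0$ uniformly in $k$, so that positivity survives the limit. A single projection of $y_k+\rho v$ onto $S$ does not do the job, since the nearest point can overshoot in the direction $v$; the forbidden-cone estimate is what provides the constant $c<\norm v$, hence the uniform slack $\norm v(\norm v-\kappa)>0$. Further care is needed in choosing $\rho_k\le\bar\tau_k\norm v$ so that the points entering the estimate remain in the region where the cone-avoidance holds, and in invoking the subdifferential sum rule and Proposition \ref{LemRobustDirNormalCone} correctly for the passage to the directional limit.
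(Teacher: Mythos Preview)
Your proof is correct and shares the paper's overall architecture: reduce everything to the single identity $\widehat T_S(x;d)=N_S(x;d)^\circ$; obtain the easy inclusion from $\widehat N_S(y)=T_S(y)^\circ$ exactly as the paper does; and for the reverse inclusion argue by contradiction, producing at each approaching point $y_k=x+t_kd_k\in S$ with $\dist(v,T_S(y_k))\ge\epsilon$ a limiting normal that pairs positively with $v$, then passing to the directional limit via Proposition~\ref{LemRobustDirNormalCone}.

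The only substantive difference is in how that normal is manufactured. The paper does it in a single line by invoking \cite[Proposition 6.27(b)]{RW98}, which directly supplies $v_k\in N_S(y_k)\cap\mathbb S$ with $\langle v,v_k\rangle=\dist(v,T_S(y_k))>\epsilon$. You instead build it from first principles: the forbidden-cone estimate gives $\langle v,s-y_k\rangle\le c\,\norm{s-y_k}$ for $s\in S$ near $y_k$ with $c<\norm v$, so $y_k$ locally minimizes $-\langle v,\cdot\rangle+\kappa\norm{\cdot-y_k}+\delta_S$ for $\kappa\in(c,\norm v)$, and Fermat's rule plus the Lipschitz sum rule yield $\eta_k\in N_S(y_k)$ with $\langle v,\eta_k\rangle\ge\norm v(\norm v-\kappa)>0$ uniformly in $k$. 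Your route is self-contained and avoids the specific Rockafellar--Wets reference, at the cost of several extra paragraphs; the paper's is much shorter but leans on a ready-made lemma. In effect you are re-proving the piece of \cite[Proposition 6.27(b)]{RW98} that is needed here.
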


 \begin{proof}
 Firstly we show $\widehat{T}_S({x};d)\subseteq N_S({x};d)^\circ$. For any given $w\in \widehat{T}_S({x};d)$, take
 $v\in N_S({x};d)$. By the definition of directional normal cone, there exist $v_n\to v$ with
 $v_n\in \widehat{N}_S(x+t_nd_n)$ for some $t_n\downarrow 0$ and $d_n\to d$ and $x+t_nd_n\in S$. Since $w\in \widehat{T}_S({x};d)$, for this sequence, by the definition of directional regular tangent cone, there exists $w_n\to w$ with $w_n\in T_S(x+t_nd_n)$. It follows that
 $\langle w_n,v_n \rangle\leq 0$ since $v_n\in \widehat{N}_S(x+t_nd_n)=(T_S(x+t_nd_n))^\circ$. Taking the limit yields $\langle w,v \rangle\leq 0$, which implies that $w\in  N_S( x;d)^\circ$. Hence $\widehat{T}_S(x;d)\subseteq N_S(x;d)^\circ$.

 Secondly we  show $\widehat{T}_S(x;d)\supseteq N_S(x;d)^\circ$.
  Suppose that $w\notin \widehat{T}_S(x;d)$.
  Then there exist $t_n\downarrow 0$, $d_n\to d$, $x+t_nd_n\in S$ such that $w\notin \liminf\limits_{n\to \infty}T_{S}(x+t_nd_n)$. Hence by \cite[Exercise 4.2(a)]{RW98}, $\displaystyle \limsup_{n\rightarrow \infty} d(w, T_{S}(x+t_nd_n))>0$,
   i.e.,  there exists $\epsilon>0$ and some subsequence $ \mathcal{N}\subseteq \{1,2,\dots,\}$ such that
 ${\rm dist}(w, T_{S}(x+t_nd_n))>\epsilon$ for all $n\in  \mathcal{N}$.
According to \cite[Proposition 6.27(b)]{RW98}, there exists $v_n\in N_S(\bar{x}+t_nd_n)\cap \mathbb{S}$ such that
 \begin{equation}\label{com-3-1}
 \langle w,v_n \rangle {=} {\rm dist}(w, T_{S}(\bar{x}+t_nd_n))>\epsilon
 \end{equation}
 for all $n\in \mathcal{N}$. Since $v_n$ is bounded, we can assume by further taking subsequence if necessary that $v_n\to v$ (as $n\to \infty$ and $n\in\mathcal{N}$). Clearly by Proposition \ref{LemRobustDirNormalCone},  $v\in N_{S}(\bar{x};d)$.
\if{ According to \cite[Proposition 6.27(b)]{RW98}, there exists $v_n\in N_S(x+t_nd_n)\cap \mathbb{S}$ such that
 \begin{equation}\label{com-3-1}
 \langle w,v_n \rangle {=} {\rm dist}(w, T_{S}(x+t_nd_n))>\epsilon
 \end{equation}
 for all $n\in \mathcal{N}$. Since
 $v_n\in N_S(x+t_nd_n),$
 by definition of the limiting normal cone,  for any fixed $n$  there exists $z_n^k \overset{S} \to x+t_nd_n$  and $v^k_n\to v_n$ with $v^k_n\in \widehat{N}_S(z_n^k)$ as $k\to \infty$.
 Thus there exists $k(n)$ such that $\|z_n^{k(n)}-(x+t_nd_n)\|\leq t_n/2^n$ and
 $\|v_n^{k(n)}-v_n\|\leq \epsilon/2(\|w\|+1)$. It follows that
 $\langle w, v_n^{k(n)} \rangle=\langle w, v_n \rangle+\langle w,v_n^{k(n)}-v_n \rangle \geq \epsilon -\|w\|\|v_n^{k(n)}-v_n\| >\frac{\epsilon}{2}$. The boundedness of the sequence $\{v_n\}$ ensures that of $\{v^{k(n)}_n\}$.
 Hence we can assume by further taking subsequence if necessary that $v^{k(n)}_n\to v$ as $n\to \infty$. Let $d'_n:=d_n+\frac{z^{k(n)}_n-(x+t_nd_n)}{t_n}$.
 Then $d'_n\to d$ since o $d_n\to d$ and $\frac{z^{k(n)}_n-(x+t_nd_n)}{t_n}\to 0$. Note that $z^{k(n)}_n=(x+t_nd_n)+(z^{k(n)}_n-(x+t_nd_n))=
 x+t_n(d_n+\frac{z^{k(n)}_n-(x+t_nd_n)}{t_n})=x+t_nd'_n$.
 Since $v^{k(n)}_n\in \widehat{N}_S(z^{k(n)}_n)=\widehat{N}_S(x+t_nd'_n)$ and $d'_n\to d$, we have that  $v\in N_S(x;d)$ by definition of directional normal cone.
 }\fi
Hence we obtain $\langle w,v \rangle\geq \epsilon$ by (\ref{com-3-1}). So $w\notin N_{S}(x;d)^\circ$.

 Thus we have shown $\widehat{T}_S(x;d)^{\rm comp}\subseteq (N_S(x;d)^\circ)^{\rm comp}$ and the inclusion $\widehat{T}_S(x;d)\supseteq N_S(x;d)^\circ$ follows. We conclude $\widehat{T}_S(x;d)=N_S(x;d)^\circ$ and therefore the directional regular tangent cone is closed and convex as a polar cone.

The rest of proofs follow from the fact that  the  directional Clarke normal cone  is closed and convex as well.
  \end{proof}

%

In this paper, we rely on the following stability property of a set-valued map in developing our results.
\begin{definition}\cite[Definition 1]{Gfr13a}
Let $\varphi :\Re^n\rightarrow  \Re^m$,  ${C} \subseteq \Re^m$ and $\varphi(\bar x) \in C$.
 We say that the  set-valued map
 $M(x):=\varphi(x)-C$ is metrically subregular (MS) at $(\bar{x},0)$ { in direction $d\in \mathbb{R}^n$}, if there exist
 $\kappa, \rho, \delta>0$ such that
 \begin{equation}\label{EqDirMS}
 {\rm dist}(x,M^{-1}(0)) \leq \kappa \, {\rm dist}(\varphi(x), C), \quad \forall x\in \bar{x}+V_{\rho,\delta}(d),
 \end{equation}
 where
 \begin{eqnarray*}
 V_{\rho,\delta}(d)&:=&\left\{w\in \rho \mathbb{B}\left|  \big\|\|d\|w-\|w\|d\big\|\leq \delta \|w\|\|d\| \right. \right\}\\
 &=& \left \{\begin{array}{ll}
 \rho \mathbb{B} & \mbox{ if } d=0\\
{\{0\}\cup}\{w\in  \rho \mathbb{B}\backslash \{0\} | \| \frac{w}{\|w  \|} -\frac{d}{\|d\|}\| \leq \delta \} & \mbox{ if } d\not =0
 \end{array} \right .
 \end{eqnarray*} is a directional neighborhood of the direction $d$.
 In the case where  $d=0$, we simple say that $M$ is metrically subregular at $(\bar{x},0)$ or metric subregularity constraint qualification (MSCQ) holds at $\bar x$.

{The infimum of  $\kappa$  over all such combinations of $\kappa$, $\rho$ and $\delta$ fulfilling \eqref{EqDirMS} is called the modulus of the respective property.}
  \end{definition}
  It is well-known that the metric subregularity of a set-valued map $M$ at $(\bar{x},0)$ is equivalent to the property of calmness/pseudo upper-Lipschitz continuity  of the inverse map $M^{-1}$ at $(0,\bar x)$; see  \cite{RW98,yy} for definition and \cite{DoRo14} for discussions about the equivalence.
\begin{proposition}\cite[Theorem 1]{HelmutKlatte} \label{FOSCMS} Let $\varphi :\Re^n\rightarrow  \Re^m$ be continuously differentiable,  $C\subseteq \Re^m$ be closed  and $\varphi(\bar x) \in C$. The  set-valued map
 $M(x):=\varphi(x)-C$ is metrically subregular  at $(\bar{x},0)$ in direction $d$ satisfying $\nabla \varphi(\bar x)d \in T_C(\varphi(\bar x))$ if the first order sufficient condition for metric subregularity (FOSCMS) for direction $d$ holds:
 \[ \nabla \varphi(\bar x)^T \lambda=0,\ \lambda \in N_C(\varphi(\bar x);\nabla \varphi(\bar x)d)\ \Longrightarrow\ \lambda=0.\]
\end{proposition}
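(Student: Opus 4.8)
The plan is to argue by contradiction, in the spirit of the standard (non-directional) first-order sufficient condition for metric subregularity, but carrying along the directional neighborhoods $V_{\rho,\delta}(d)$ and using the directional limiting normal cone $N_C(\varphi(\bar x);\nabla\varphi(\bar x)d)$ in place of $N_C(\varphi(\bar x))$. Suppose $M(x)=\varphi(x)-C$ is \emph{not} metrically subregular at $(\bar x,0)$ in direction $d$. Then for every $k$ (taking $\kappa=\rho=\delta=k$, say, or rather $\rho=\delta=1/k$ and $\kappa=k$) there is a point $x_k\in\bar x+V_{1/k,1/k}(d)$ violating the estimate, i.e.
\[
\dist(x_k,M^{-1}(0)) > k\,\dist(\varphi(x_k),C).
\]
In particular $x_k\neq\bar x$ (since $\varphi(\bar x)\in C$ forces the right side to be $0$ there and $M^{-1}(0)$ is nonempty), $x_k\to\bar x$, and writing $x_k=\bar x+t_kd_k$ with $t_k:=\|x_k-\bar x\|\downarrow 0$, the geometry of $V_{1/k,1/k}(d)$ gives $d_k\to d/\|d\|$ (rescale so that $d$ is a unit direction, or carry $\|d\|$ through). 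One must also observe $x_k\notin M^{-1}(0)$, so $\dist(\varphi(x_k),C)>0$.

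Next I would introduce projections: let $c_k\in C$ with $\|\varphi(x_k)-c_k\|=\dist(\varphi(x_k),C)=:r_k>0$, so $c_k\in\Pi_C(\varphi(x_k))$ and hence $\varphi(x_k)-c_k\in\widehat N_C(c_k)$ (proximal normals are regular normals for closed sets; cf.\ \cite[Example 6.16]{RW98}). Set $\lambda_k:=(\varphi(x_k)-c_k)/r_k\in\widehat N_C(c_k)\cap\mathbb S$. The key quantitative step, exactly as in Ioffe's / Gfrerer--Klatte argument, is to estimate $\nabla\varphi(\bar x)^T\lambda_k$ and show it tends to $0$: one uses that $x_k$ minimizes (to within the failure of subregularity) the function $x\mapsto\dist(\varphi(x),C)$ along the feasible region, applies the mean value / Taylor expansion $\varphi(x_k)-\varphi(\bar x)=\nabla\varphi(\bar x)(x_k-\bar x)+o(t_k)$ together with continuous differentiability, and exploits the fact that $\dist(\varphi(x_k),C)$ decreases faster than $t_k/k$. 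This forces, in the limit, $\|\nabla\varphi(\bar x)^T\lambda_k\|\to 0$. By compactness of $\mathbb S$ pass to a subsequence with $\lambda_k\to\lambda$, $\|\lambda\|=1$, so $\nabla\varphi(\bar x)^T\lambda=0$.

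It remains to identify $\lambda$ as an element of $N_C(\varphi(\bar x);\nabla\varphi(\bar x)d)$, which then contradicts FOSCMS since $\lambda\neq 0$. For this I note $c_k\to\varphi(\bar x)$ (because $\varphi(x_k)\to\varphi(\bar x)$ and $r_k\to 0$), and I must produce the correct \emph{direction} of approach: writing $c_k=\varphi(\bar x)+s_k e_k$ with $s_k:=\|c_k-\varphi(\bar x)\|$, the estimate $\|c_k-\varphi(\bar x)\|\le\|c_k-\varphi(x_k)\|+\|\varphi(x_k)-\varphi(\bar x)\|\le r_k+\|\nabla\varphi(\bar x)\|t_k+o(t_k)$ together with $r_k\le\dist(\varphi(\bar x+t_kd_k),C)\le\|\nabla\varphi(\bar x)d_k\|t_k+o(t_k)$ (using $\varphi(\bar x)\in C$) shows $s_k=O(t_k)$; more carefully, $c_k-\varphi(\bar x)=(\varphi(x_k)-\varphi(\bar x))-r_k\lambda_k=t_k\nabla\varphi(\bar x)d_k-r_k\lambda_k+o(t_k)$, and since $r_k=o(t_k)$ is \emph{not} guaranteed I instead argue that any subsequential limit of $(c_k-\varphi(\bar x))/t_k$ is of the form $\nabla\varphi(\bar x)d-\mu\bar\lambda$ for the limiting ratio $\mu=\lim r_k/t_k\ge 0$ — but crucially $\lambda_k\in\widehat N_C(c_k)=\widehat N_C(\varphi(\bar x)+s_ke_k)$ with $s_k\downarrow 0$ and $e_k$ converging along a subsequence, so by the \emph{definition} of the directional limiting normal cone, $\lambda\in N_C(\varphi(\bar x);e)$ where $e=\lim e_k$. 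The remaining point is that this limiting direction $e$ is a positive multiple of $\nabla\varphi(\bar x)d$: since $\nabla\varphi(\bar x)d\in T_C(\varphi(\bar x))$ is assumed and $r_k/t_k$ stays bounded, the component of $c_k-\varphi(\bar x)$ transverse to $\nabla\varphi(\bar x)d$ is $o(t_k)$, so $e\in\mathbb R_+\nabla\varphi(\bar x)d$ and $N_C(\varphi(\bar x);e)=N_C(\varphi(\bar x);\nabla\varphi(\bar x)d)$ by positive homogeneity of the directional normal cone in its direction argument. This yields $0\neq\lambda\in N_C(\varphi(\bar x);\nabla\varphi(\bar x)d)$ with $\nabla\varphi(\bar x)^T\lambda=0$, contradicting FOSCMS.

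The main obstacle I anticipate is the bookkeeping in the last paragraph: controlling simultaneously the two scales $t_k=\|x_k-\bar x\|$ and $r_k=\dist(\varphi(x_k),C)$, showing the ratio $r_k/t_k$ is bounded and that the projection points $c_k$ approach $\varphi(\bar x)$ precisely along direction $\nabla\varphi(\bar x)d$ (up to positive scaling) so that the limiting multiplier lands in the \emph{directional} normal cone rather than merely the full limiting normal cone — that is exactly where the directional refinement over the classical FOSCMS is bought, and it requires carefully using the hypothesis $\nabla\varphi(\bar x)d\in T_C(\varphi(\bar x))$. A clean way to package this is to cite the non-directional quantitative estimate from \cite{HelmutKlatte} verbatim on the relevant subsequence and only add the directional identification of the limit; alternatively one can invoke \cite[Theorem 6.26/6.27]{RW98} as was done in Proposition~\ref{PropRegTanCone} to relate distances to normals.
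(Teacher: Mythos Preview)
The paper does not prove this proposition; it is quoted verbatim from \cite[Theorem 1]{HelmutKlatte} and stated without argument, so there is no in-paper proof to compare against. I can only comment on your sketch on its own terms.

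Your outline (contradiction; build $x_k=\bar x+t_kd_k$ in shrinking directional neighborhoods; extract a unit limit $\lambda\in N_C(\varphi(\bar x);\nabla\varphi(\bar x)d)\cap\ker\nabla\varphi(\bar x)^T$) is the right one, but there is a genuine gap at what you call ``the key quantitative step''. Taking $\lambda_k=(\varphi(x_k)-c_k)/r_k$ with $c_k\in\Pi_C(\varphi(x_k))$ gives a unit regular normal to $C$ at $c_k$, yet carries \emph{no} stationarity information in $x$; your claim that ``$x_k$ minimizes (to within the failure of subregularity) $\dist(\varphi(\cdot),C)$'' is not justified---$x_k$ is merely a point at which the subregularity estimate fails, not an approximate minimizer of anything. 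A concrete counterexample: $\varphi(x_1,x_2)=(x_1+x_2^2,\,x_2^2)$, $C=\{(0,0)\}$, $\bar x=0$, $d=(0,1)$, $x_k=(0,1/k)$. Then $\lambda_k=(1,1)/\sqrt{2}$ for all $k$, and $\nabla\varphi(0)^T\lambda_k=(1/\sqrt{2},0)\not\to 0$, even though directional MS fails and FOSCMS is violated by $\lambda=(0,1)$. What is missing is Ekeland's variational principle: from $h(x):=\dist(\varphi(x),C)$, $h(x_k)=r_k$ and $\dist(x_k,h^{-1}(0))>kr_k$, one passes to $\tilde x_k$ with $\norm{\tilde x_k-x_k}\leq\sqrt{k}\,r_k$ minimizing $h(\cdot)+\tfrac{1}{\sqrt{k}}\norm{\cdot-\tilde x_k}$; only then does subdifferential calculus deliver a unit $\mu_k$ normal to $C$ at a nearby point with $\norm{\nabla\varphi(\tilde x_k)^T\mu_k}\leq 1/\sqrt{k}$. (The cleanest implementation, since $\varphi(\tilde x_k)\notin C$, is to decouple via $(x,y)\mapsto\norm{\varphi(x)-y}+\delta_C(y)$ before applying Ekeland.)

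Conversely, the obstacle you flag---showing the base points approach $\varphi(\bar x)$ in direction $\nabla\varphi(\bar x)d$---is lighter than you believe. From $kr_k<\dist(x_k,M^{-1}(0))\leq\norm{x_k-\bar x}=t_k$ you get $r_k<t_k/k=o(t_k)$ directly; your parenthetical ``$r_k=o(t_k)$ is not guaranteed'' is simply wrong. With the Ekeland parameter above, $\norm{\tilde x_k-x_k}\leq\sqrt{k}\,r_k<t_k/\sqrt{k}=o(t_k)$, hence $\tilde x_k=\bar x+t_k\tilde d_k$ with $\tilde d_k\to d/\norm{d}$, and the base points in $C$ satisfy $\tilde c_k-\varphi(\bar x)=t_k\nabla\varphi(\bar x)\tilde d_k+o(t_k)$; the directional-normal identification then follows immediately by positive homogeneity of $N_C(\varphi(\bar x);\cdot)$ in its direction argument.
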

Recently, weaker sufficient conditions than FOSCMS such as the directional quasi/pseudo-normality was introduced in \cite{BaiYe}. More sufficient conditions based on directional normal cones {or/and} for specific systems can be found e.g. in {\cite{HelmutKlatte,GY17,YZ17}}.

Classical sufficient conditions for metric subregularity include the case where $\varphi$ is affine and $C$ is the union of finitely many polyhedral sets by Robinson's polyhedral multifunction theory \cite{Robinson81} and the no nonzero abnormal multiplier constraint qualification (NNAMCQ) holds:
\begin{equation*}
\nabla \varphi(\bar x)^T \lambda=0, \ \lambda\in N_C(\varphi (\bar x))\Longrightarrow \lambda=0,\end{equation*}
by Mordukhovich criteria for metric regularity (see e.g., \cite[Theorem 9.40]{RW98}).
Note that when $C$ is convex,
 by \cite[Corollary 2.98]{BS},  NNAMCQ is equivalent to  Robinson's constraint qualification \cite[(2.178)]{BS}
$$0\in {\rm int}\{ \varphi(\bar x)+\nabla \varphi(\bar x)\mathbb{R}^n -C\},$$
{which in turn is equivalent to
\[\nabla \varphi(\bar x)\R^n+T_C(\varphi(\bar x))=\R^m\]
in finite dimensions.}

In the following result, we show that the directional metric subregularity of the mapping $M(x):=\varphi(x)-C$ is carried over to its linearized mapping, the so-called graphical derivative. Recall that for a set-valued mapping $M:\mathbb{R}^n\rightrightarrows\mathbb{R}^m$ the {\em graphical derivative} to $M$ at a point $(\bar x, \bar y)\in {\rm gph\,} M$ is the mapping $DM(\bar x,\bar y):\mathbb{R}^n\rightrightarrows\mathbb{R}^m$ satisfying
\[{\rm gph\,}DM(\bar x,\bar y)=T_{{\rm gph\,}M}(\bar x,\bar y),\]
resulting in $D(\varphi(\cdot)-C)(\bar x,\bar y)(w)=\nabla \varphi(\bar x)w-T_C(\varphi(\bar x)-\bar y).$
\begin{lemma}\label{LemSubregDM}
  Let $\varphi :\Re^n\rightarrow  \Re^m$ be continuously differentiable,  ${C} \subseteq \Re^m$ be closed  and $\varphi(\bar x) \in C$. If $M(x):=\varphi(x)-C$ is metrically subregular  at $(\bar x,0)$ in direction $$d\in\nabla \varphi(\bar x)^{-1}(T_C(\varphi(\bar x)):=\{d| \nabla \varphi(\bar x)d \in T_C(\varphi(\bar x))\}$$ with modulus $\bar\kappa$, then the graphical derivative $DM(\bar x,0)(w)=\nabla \varphi(\bar x)w-T_C(\varphi(\bar x))$ is metrically subregular at $(d,0)$ with modulus no larger than $\bar\kappa$.
\end{lemma}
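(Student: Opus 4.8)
The plan is to unwind both metric subregularity statements to their definitions and transfer the estimate from $M$ to $DM(\bar x,0)$ by exploiting the differentiability of $\varphi$ to linearize $M$ along a short ray. Write $z:=\varphi(\bar x)$, $A:=\nabla\varphi(\bar x)$, and $L:=DM(\bar x,0)$, so $L(w)=Aw-T_C(z)$ and $L^{-1}(0)=A^{-1}(T_C(z))$. We are given $\bar\kappa$ and, for each $\kappa>\bar\kappa$, constants $\rho,\delta>0$ with $\dist(x,M^{-1}(0))\le\kappa\,\dist(\varphi(x),C)$ for all $x\in\bar x+V_{\rho,\delta}(d)$. The goal is to produce, for each such $\kappa$, constants $\rho',\delta'>0$ so that $\dist(w,L^{-1}(0))\le\kappa\,\dist(L(w),\{0\})=\kappa\,\dist(Aw,T_C(z))$ for all $w\in d+V_{\rho',\delta'}(0')$ — but note the direction at which we test subregularity of $L$ is $0$ wait, rather we test at $(d,0)$, meaning $w$ ranges over $d+V_{\rho',\delta'}$ of the ambient direction; here the ``base point'' is $d$ and we need the estimate on a full directional neighborhood of the trivial direction, i.e. an honest ball $d+\rho'\mathbb B$.

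First I would reduce $\dist(Aw,T_C(z))$ to a statement about points of $C$: since $T_C(z)=\limsup_{t\downarrow 0}(C-z)/t$, for any $w$ there are $t_k\downarrow 0$ and $v_k\to$ some point of $T_C(z)$ realizing the distance, with $z+t_kv_k$ ``almost'' in $C$; more usefully, from \eqref{dist1}-type reasoning and the definition of the contingent cone one gets that $\dist(z+tAw,C)=\dist(tAw,C-z)\le t\,\dist(Aw,T_C(z))+o(t)$ along a suitable sequence $t\downarrow 0$. Then, using $\varphi(\bar x+tw)=z+tAw+o(t)$ and the $1$-Lipschitz continuity of $\dist(\cdot,C)$, I get $\dist(\varphi(\bar x+tw),C)\le t\,\dist(Aw,T_C(z))+o(t)$. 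The key point is to check that for $w$ close to $d$ and $t>0$ small, the point $\bar x+tw$ lies in $\bar x+V_{\rho,\delta}(d)$: indeed $tw\in\rho\mathbb B$ for small $t$, and the angular condition $\|\,\|d\|\,tw-\|tw\|d\,\|\le\delta\|tw\|\|d\|$ reduces to $\|\,\|d\|w-\|w\|d\,\|\le\delta\|w\|\|d\|$, which holds once $w$ is sufficiently near $d$ (for $d\ne 0$; if $d=0$ the condition is vacuous). So applying \eqref{EqDirMS} to $x=\bar x+tw$ gives $\dist(\bar x+tw,M^{-1}(0))\le\kappa\,\dist(\varphi(\bar x+tw),C)\le\kappa t\,\dist(Aw,T_C(z))+o(t)$.

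Next I would convert the left-hand side into a statement about $L^{-1}(0)=A^{-1}(T_C(z))$. Pick $x_t\in M^{-1}(0)$ nearly attaining $\dist(\bar x+tw,M^{-1}(0))$, so $\|x_t-\bar x\|\le\|x_t-(\bar x+tw)\|+t\|w\|=O(t)$; writing $x_t=\bar x+t w_t$ with $\|w_t-w\|=\dist(\bar x+tw,M^{-1}(0))/t+o(1)$, wait — more precisely $w_t:=(x_t-\bar x)/t$ satisfies $\|w_t-w\|\le\dist(\bar x+tw,M^{-1}(0))/t+o(1)$. Since $\varphi(x_t)\in C$ and $\varphi(x_t)=z+tAw_t+o(t)$, we get $\dist(tAw_t,C-z)=o(t)$, hence (passing to a subsequence so $w_t\to\bar w$) $A\bar w\in T_C(z)$, i.e. $\bar w\in L^{-1}(0)$, and $\|w-\bar w\|=\lim\|w-w_t\|\le\limsup\dist(\bar x+tw,M^{-1}(0))/t\le\kappa\,\dist(Aw,T_C(z))$. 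Therefore $\dist(w,L^{-1}(0))\le\|w-\bar w\|\le\kappa\,\dist(Aw,T_C(z))$, which is the desired inequality; taking the infimum over admissible $\kappa>\bar\kappa$ gives modulus at most $\bar\kappa$. I expect the main obstacle to be the bookkeeping of the two interleaved limits — the $o(t)$ from the Taylor expansion of $\varphi$ and the $o(t)$/infimizing slack in the two distance functions — and making sure the sequence $t\downarrow 0$ can be chosen simultaneously to realize $\dist(Aw,T_C(z))$ via the contingent cone and to keep $\bar x+tw$ inside the directional neighborhood; a clean way around it is to fix $w$, choose $t_k\downarrow 0$ realizing the outer limit defining $\dist(\cdot,T_C(z))$, run the whole argument along $t_k$, and only at the end note that the resulting bound on $\dist(w,L^{-1}(0))$ does not depend on the choice of sequence. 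One should also separate the trivial case $Aw\in T_C(z)$ (where the right side may be $0$ and one must produce $\bar w=w\in L^{-1}(0)$ directly, using that $T_C(z)$ is closed) from the generic case, and handle $d=0$ by noting $V_{\rho,\delta}(0)=\rho\mathbb B$ so the angular constraint disappears.
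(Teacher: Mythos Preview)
Your proposal is correct and follows essentially the same route as the paper: fix $\kappa>\bar\kappa$, pick a projection $v\in T_C(z)$ of $Aw$ and a sequence $t_k\downarrow 0$ with $\dist(z+t_k v,C)=o(t_k)$, apply the directional subregularity of $M$ at $\bar x+t_k w$ (which lies in the directional neighborhood since $V_{\rho,\delta}(d)$ is star-shaped and contains a ball around $d$), extract a limit $w'$ of $(x_k-\bar x)/t_k$ lying in $L^{-1}(0)$, and read off $\|w-w'\|\le\kappa\,\dist(Aw,T_C(z))$. The only cosmetic differences are that the paper works with one fixed triple $(\kappa,\rho,\delta)$ rather than quantifying over $\kappa>\bar\kappa$, and it notes explicitly that $tw\in V_{\rho,\delta}(d)$ for all $t\in[0,1]$ whenever $w\in V_{\rho,\delta}(d)$, which slightly streamlines your neighborhood bookkeeping.
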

\begin{proof}
  Choose $\kappa,\rho,\delta>0$ such that \eqref{EqDirMS} holds and consider $w\in V_{\rho,\delta}(d)$ together with $v\in T_C(\varphi(\bar x))$ satisfying $\dist\big(\nabla \varphi(\bar x)w,T_C(\varphi(\bar x))\big)=\norm{\nabla \varphi(\bar x)w-v}$. Then $tw\in V_{\rho,\delta}(d)$ for all $t\in[0,1]$ and, by picking a sequence $t_k\downarrow 0$ with $\dist(\varphi(\bar x)+t_k v,C)=o(t_k)$, we have
  \begin{eqnarray*} \dist(\bar x+t_k w,M^{-1}(0))
  & \leq & \kappa\dist(\varphi(\bar x+t_kw),C)\\
  & =& \kappa\big(\dist(\varphi(\bar x)+t_k\nabla \varphi(\bar x)w+o(t_k),C)\big)\\
  & =& \kappa\big(\dist(\varphi(\bar x)+t_k\nabla \varphi(\bar x)w,C)+o(t_k)\big)\\
  & \leq &\kappa\big(t_k\norm{\nabla \varphi(\bar x)w-v}+o(t_k)\big),\end{eqnarray*}
  where the second equality follows from the Lipschitz property of the distance function.

  Thus we can find a sequence $x_k$ satisfying $\varphi(x_k)\in C$ and
  \begin{equation}\label{EqAuxDirMS}\norm{x_k-(\bar x+t_k w)}\leq\kappa\big(t_k\norm{\nabla \varphi(\bar x)w-v}+o(t_k)\big).\end{equation}
  It follows that $(x_k-\bar x)/t_k$ is bounded and, by possibly passing to a subsequence, we may assume that $(x_k-\bar x)/t_k$ converges to some $w'$. Dividing \eqref{EqAuxDirMS} by $t_k$ and passing to the limit we obtain
  $\norm{w-w'}\leq\kappa\norm{\nabla \varphi(\bar x)w-v}$. Further, $$\dist(\varphi(\bar x)+t_k\nabla\varphi(\bar x)w',C)\leq \norm{\varphi(\bar x)+t_k\nabla\varphi(\bar x)w'-\varphi(x_k)}=o(t_k)$$ showing $\nabla\varphi(\bar x)w'\in T_C(\varphi(\bar x))$ and thus $w'\in DM(\bar x,0)^{-1}(0)$. Since the directional neighborhood $V_{\rho,\delta}(d)$ is
also a neighborhood of $d$ in the classical sense,  we can find some $\rho'>0$ such that $d+V_{\rho',\delta}(0)=d+\rho'\mathbb{B}\subseteq V_{\rho,\delta}(d)$. Thus we have shown that for all $w\in d+V_{\rho',\delta}(0)$ there holds
  \[\dist(w,DM(\bar x,0)^{-1}(0))\leq \|w-w'\| \leq \kappa \norm{\nabla \varphi(\bar x)w-v}=\kappa \dist\big(\nabla \varphi(\bar x)w,T_C(\varphi(\bar x))\big)\]
  showing metric subregularity of $DM(\bar x,0)$ at $(d,0)$
  \end{proof}
\subsection{Uniform MSCQ for the  second-order linearized  mapping}
From now on we denote by $\mathcal{F}$  the feasible region of problem (P), i.e., $\mathcal{F}:=\{x\,|\, g(x)\in \Lambda\}=M^{-1}(0)$, where $M(x):=g(x)-\Lambda$ is the feasible mapping.
If   the feasible mapping $M(x)$ is metrically subregular at $(\bar{x},0)$,  then
\[T_{\mathcal{F}}(\bar x)=DM(\bar x,0)^{-1}(0)=\{d\,|\, \nabla g(\bar x)d\in T_\Lambda(g(\bar x))\},\]
see, e.g., \cite[Proposition 1]{HO05} or \cite[Corollary {4.2}]{GO16-2}, where $DM(\bar x,0)(d)=\nabla g(\bar x)d-T_\Lambda(g(\bar x))$ denotes the  graphical derivative of $M$ at $(\bar x,0)$. Moreover,
by  \cite[Lemmas 3 and  4]{Helmut18} (a variant of \cite[Proposition 2.1]{Gfr11}), there is some $\kappa>0$ such that
\[\dist(d, T_{\mathcal{F}}(\bar x))\leq \kappa\dist(0, DM(\bar x,0)(d))=\kappa\dist(\nabla g(\bar x)d,T_\Lambda(g(\bar x)))\ \ \forall d\in\Re^n,\]
which is some kind of uniform metric subregularity of the graphical derivative. We will now show that an analogous relation holds for the second-order tangent set $T^2_{\mathcal{F}}(\bar x;d)$ and the second-order linearized mapping
\begin{equation}\label{EqSecOrdLin}
  D^2M(\bar x,0;d)(w):=\nabla g(\bar x)w+\nabla^2g(\bar x)(d,d)-T^2_\Lambda\big(g(\bar x);\nabla g(\bar x)d \big).
\end{equation}

To prove the result, we need the following lemma.
 \begin{lemma}\label{lem-distance}
 For any $x\in \Re^n$ and a set-valued mapping $C:\Re^m\rightrightarrows \Re^n$, one has
 \[
 \liminf_{u'\to u} {\rm dist}(x,C(u'))={\rm dist} (x,\limsup_{u'\to u}C(u')).
 \]
 \end{lemma}

 \begin{proof}
 Let $\{u_n\}$ be a sequence  satisfying
 \[ \liminf_{n\to \infty} {\rm dist}(x,C(u_n))= \liminf_{u'\to u} {\rm dist}(x,C(u')).\]
  Then according to \cite[Exercise 4.8]{RW98} we have
 \[\liminf_{n\to \infty} {\rm dist}(x,C(u_n))={\rm dist} (x,\limsup_{n\to \infty}C(u_n))\geq {\rm dist} (x,\limsup_{u'\to u}C(u')).\]
  Hence
 \[\liminf_{u'\to u} {\rm dist}(x,C(u'))\geq {\rm dist} (x,\limsup_{u'\to u}C(u')).\]

 Conversely, take $r$ satisfying $r>{\rm dist} (x,\limsup_{u'\to u}C(u'))$. Then there exists $x'\in \limsup_{u'\to u}C(u')$ such that $\|x-x'\|<r$. Since $x'\in \limsup_{u'\to u}C(u')$, then there exists $u_n\to u$ and $x'_n\in C(u_n)$ and $x'_n\to x'$. So $\|x-x'_n\|<r$ as $n$ large enough. Hence
 ${\rm dist}(x,C(u_n))\leq \|x-x'_n\|<r$. So
 \[r\geq \liminf_{n\to \infty}{\rm dist}(x,C(u_n))\geq \liminf_{u'\to u}{\rm dist}(x,C(u')).\]
 Due to the arbitrariness of $r>{\rm dist} (x,\limsup_{u'\to u}C(u'))$, we obtain
 \[{\rm dist} (x,\limsup_{u'\to u}C(u'))\geq \liminf_{u'\to u}{\rm dist}(x,C(u')).\]
 \end{proof}
\if{ \begin{lemma}\label{Subproblem-MSCQ}
Suppose that  the set-valued map $M(x):=g(x)-\Lambda$ is metrically subregular at $(\bar{x},0)$ in direction $d$ with $\bar{x}\in \mathcal{F}$ and $\nabla g(\bar x)d\in T_{\Lambda}(g(\bar{x}))$. Then the set-valued map
 \[
 M_d:w\rightrightarrows \nabla g(\bar{x})w+\nabla g^2(\bar{x})(d,d)- T^2_{\Lambda}\big(g(\bar{x}); \nabla g(\bar{x})d\big)
 \]
is uniformly metric subregular on $\mathbb{R}^n$, i.e., there exists  $\kappa>0, \rho>0, \delta>0$ such that
 \begin{equation*}\label{ms-2-tangent}
 {\rm dist}(w',T_{\cal F}^2(x;d))\leq \tau {\rm dist}\left(\nabla g(\bar{x})w'+\nabla g^2(\bar{x})(d,d), T^2_{\Lambda}\Big(g(\bar{x});\nabla g(\bar{x})d\Big) \right), \ \ \forall w'\in  \mathbb{R}^n.
 \end{equation*}
 \end{lemma}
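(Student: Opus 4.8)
The plan is to carry the directional metric subregularity of the feasible mapping $M$ over to its second-order linearization by rescaling along the parabola $t\mapsto\bar x+td+\tfrac{t^2}{2}w$ and Taylor-expanding $g$ to second order, using Lemma~\ref{lem-distance} to turn the rescaled distances into distances to the outer second-order tangent sets. Fix $\kappa,\rho,\delta>0$ witnessing the directional metric subregularity of $M$ at $(\bar x,0)$ in direction $d$, i.e.\ $\dist(x,\mathcal F)\le\kappa\,\dist(g(x),\Lambda)$ for all $x\in\bar x+V_{\rho,\delta}(d)$; I will prove the estimate with this $\kappa$, which does not depend on the point $w$. Adopting the convention $\dist(\cdot,\emptyset)=+\infty$, the asserted inequality is trivial when $T^2_\Lambda\big(g(\bar x);\nabla g(\bar x)d\big)=\emptyset$, so assume this set is nonempty; fix $w\in\Re^n$ and put $v:=\nabla g(\bar x)w+\nabla^2g(\bar x)(d,d)$.

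First I would reformulate the two distances in the claim. Using $\dist\big(y,\tfrac{2}{t^2}(A-a)\big)=\tfrac{2}{t^2}\,\dist\big(a+\tfrac{t^2}{2}y,A\big)$ for $t>0$ together with $T^2_{\mathcal F}(\bar x;d)=\limsup_{t\downarrow 0}\tfrac{2}{t^2}(\mathcal F-\bar x-td)$, Lemma~\ref{lem-distance} applied to the set-valued map $t\mapsto\tfrac{2}{t^2}(\mathcal F-\bar x-td)$ (set equal to $\emptyset$ for $t\le 0$) gives
\[\dist\big(w,T^2_{\mathcal F}(\bar x;d)\big)=\liminf_{t\downarrow 0}\tfrac{2}{t^2}\,\dist\big(\bar x+td+\tfrac{t^2}{2}w,\mathcal F\big),\]
and the identical computation with $(\mathcal F,\bar x,d,w)$ replaced by $\big(\Lambda,g(\bar x),\nabla g(\bar x)d,v\big)$ yields
\[\dist\big(v,T^2_\Lambda(g(\bar x);\nabla g(\bar x)d)\big)=\liminf_{t\downarrow 0}\tfrac{2}{t^2}\,\dist\big(g(\bar x)+t\nabla g(\bar x)d+\tfrac{t^2}{2}v,\Lambda\big).\]

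Next I would bound the first integrand by the second for small $t>0$. The displacement $td+\tfrac{t^2}{2}w$ tends to $0$ and, when $d\neq 0$, its direction $\tfrac{d+(t/2)w}{\|d+(t/2)w\|}$ tends to $\tfrac{d}{\|d\|}$ as $t\downarrow 0$, so $\bar x+td+\tfrac{t^2}{2}w\in\bar x+V_{\rho,\delta}(d)$ for all small $t>0$ (trivially so if $d=0$, since $V_{\rho,\delta}(0)=\rho\mathbb B$). For such $t$, metric subregularity of $M$, the expansion $g(\bar x+td+\tfrac{t^2}{2}w)=g(\bar x)+t\nabla g(\bar x)d+\tfrac{t^2}{2}v+o(t^2)$, and the $1$-Lipschitz continuity of $\dist(\cdot,\Lambda)$ give
\[\dist\big(\bar x+td+\tfrac{t^2}{2}w,\mathcal F\big)\le\kappa\,\dist\big(g(\bar x+td+\tfrac{t^2}{2}w),\Lambda\big)=\kappa\,\dist\big(g(\bar x)+t\nabla g(\bar x)d+\tfrac{t^2}{2}v,\Lambda\big)+o(t^2).\]
Dividing by $t^2/2$ and taking $\liminf_{t\downarrow 0}$, where the $o(t^2)$-term becomes a vanishing $o(1)$ and $\liminf(a_t+b_t)\le\liminf a_t+\limsup b_t$ is used, the two reformulations above produce $\dist\big(w,T^2_{\mathcal F}(\bar x;d)\big)\le\kappa\,\dist\big(v,T^2_\Lambda(g(\bar x);\nabla g(\bar x)d)\big)$. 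As $w\in\Re^n$ was arbitrary this is the claimed estimate, which in turn gives $M_d^{-1}(0)=T^2_{\mathcal F}(\bar x;d)$ (``$\subseteq$'' from the estimate applied to $v\in T^2_\Lambda(g(\bar x);\nabla g(\bar x)d)$ together with closedness of the outer limit $T^2_{\mathcal F}(\bar x;d)$, and ``$\supseteq$'' by a direct second-order Taylor argument); hence $M_d$ is metrically subregular uniformly on $\Re^n$ with modulus at most $\kappa$. Note that, unlike $M$, the linearization $M_d$ enjoys the unrestricted (non-directional) estimate, the error terms of the linearization being washed out in the limit.

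The step I expect to be the crux is the reformulation in the second paragraph: one must be certain that the Painlev\'{e}-Kuratowski outer limit defining $T^2_{\mathcal F}(\bar x;d)$ (and $T^2_\Lambda$) is faithfully captured by the $\liminf$ over the \emph{continuum} $t\downarrow 0$ of the rescaled distances $\tfrac{2}{t^2}\dist(\bar x+td+\tfrac{t^2}{2}w,\mathcal F)$, which is exactly what Lemma~\ref{lem-distance} delivers, and that dividing the $o(t^2)$ Taylor remainder by $t^2/2$ leaves only a harmless $o(1)$. The rest --- membership in the directional neighborhood, the Lipschitz estimate, and the elementary $\liminf$ inequality --- is routine.
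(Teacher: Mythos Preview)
Your proof is correct and follows essentially the same route as the paper's argument (which appears as the proof of Proposition~\ref{PropSecOrderTanSet}): rescale along the parabola $t\mapsto\bar x+td+\tfrac{t^2}{2}w$, Taylor-expand $g$, apply the directional metric subregularity inequality, divide by $t^2/2$, and invoke Lemma~\ref{lem-distance} on both sides to pass to the second-order tangent sets. The only cosmetic difference is that the paper works with an arbitrary $\kappa'>\kappa$ (because the modulus is defined as an infimum) and then lets $\kappa'\downarrow\kappa$, whereas you fix a single witnessing triple $(\kappa,\rho,\delta)$, which already suffices for the existence statement as formulated.
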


 \begin{lemma}\label{lem-distance}
 For any $x\in \Re^n$ and a set-valued mapping $C:\Re^m\rightrightarrows \Re^n$, one has
 \[
 \liminf_{u'\to u} {\rm dist}(x,C(u'))={\rm dist} (x,\limsup_{u'\to u}C(u')).
 \]
 \end{lemma}

 \begin{proof}
 Let $\{u_n\}$ be a sequence  satisfying
 \[ \liminf_{n\to \infty} {\rm dist}(x,C(u_n))= \liminf_{u'\to u} {\rm dist}(x,C(u')).\]
  Then according to \cite[Exercise 4.8]{RW98} we have
 \[\liminf_{n\to \infty} {\rm dist}(x,C(u_n))={\rm dist} (x,\limsup_{n\to \infty}C(u_n))\geq {\rm dist} (x,\limsup_{u'\to u}C(u')).\]
  Hence
 \[\liminf_{u'\to u} {\rm dist}(x,C(u'))\geq {\rm dist} (x,\limsup_{u'\to u}C(u')).\]

 Conversely, take $r$ satisfying $r>{\rm dist} (x,\limsup_{u'\to u}C(u'))$. Then there exists $x'\in \limsup_{u'\to u}C(u')$ such that $\|x-x'\|<r$. Since $x'\in \limsup_{u'\to u}C(u')$, then there exists $u_n\to u$ and $x'_n\in C(u_n)$ and $x'_n\to x'$. So $\|x-x'_n\|<r$ as $n$ large enough. Hence
 ${\rm dist}(x,C(u_n))\leq \|x-x'_n\|<r$. So
 \[r\geq \liminf_{n\to \infty}{\rm dist}(x,C(u_n))\geq \liminf_{u'\to u}{\rm dist}(x,C(u')).\]
 Due to the arbitrariness of $r>{\rm dist} (x,\limsup_{u'\to u}C(u'))$, we obtain
 \[{\rm dist} (x,\limsup_{u'\to u}C(u'))\geq \liminf_{u'\to u}{\rm dist}(x,C(u')).\]
 \end{proof}
  \begin{proof}
Since $g(x)-\Lambda$ is metrically subregular at
 $(\bar{x},0)$ in direction $d$, by Definition \ref{directionMS},  there exist
 $\kappa, \rho, \delta>0$ such that
 \[
 {\rm dist}(x,{\cal F}) \leq \kappa \, {\rm dist}(g( x) , \Lambda), \quad \forall x-\bar{x}\in V_{\rho,\delta}(d).
 \]Let  $w'\in A$ be fixed and $x=:\bar x+td+\frac{1}{2}t^2w'$. Then for $t>0$ sufficiently small,
 $x-\bar x=td+\frac{1}{2}t^2w' \in V_{\rho,\delta}(d)$. It follows that
 \begin{eqnarray*}
\lefteqn{{\rm dist}(\bar{x}+td+\frac{1}{2}t^2w',{\cal F})}\\
  &\leq & \kappa {\rm dist}(g(\bar{x}+td+\frac{1}{2}t^2w'),\Lambda)\\
 &=& \kappa{\rm dist}\Big(g(\bar{x})+t\nabla g(\bar{x})d+\frac{1}{2}t^2\big(\nabla g(\bar{x})w'+\nabla^2g(\bar{x})(d,d)\big)+o(t^2),\Lambda\Big)\\
 &=& \kappa{\rm dist}\Big(g(\bar{x})+t\nabla g(\bar{x})d+\frac{1}{2}t^2\big(\nabla g(\bar{x})w'+\nabla^2g(\bar{x})(d,d)\big),\Lambda\Big)+o(t^2),
 \end{eqnarray*}
 where the first and second equalities follow  from Taylor expansion and the Lipschitz continuity of the distance function. Dividing both sides of the above inequality by $\frac{1}{2} t^2$ we obtain
 \[
 {\rm dist}\left(w',\frac{{\cal F}-\bar{x}-t d}{\frac{1}{2}t^2}\right)\leq \kappa {\rm dist} \left(\nabla g(\bar{x})w'+\nabla^2 g(\bar{x})(d,d),\frac{\Lambda-g(\bar{x})-t \nabla g(\bar{x})d}{\frac{1}{2}t^2}\right)+\frac{o(t^2)}{\frac{1}{2}t^2}.
 \]
 Taking the inf-limits on the both sides of the above inequality and using Lemma \ref{lem-distance} yields
 \begin{eqnarray*}
\lefteqn{{\rm dist}(w',T_{\cal F}^2(\bar x;d))}\\
 &=&{\rm dist}\left(w',\limsup_{t\downarrow 0}\frac{{\cal F}-\bar{x}-t d}{\frac{1}{2}t^2}\right)=
 \liminf_{t\downarrow 0}{\rm dist}\left(w',\frac{{\cal F}-\bar{x}-t d}{\frac{1}{2}t^2}\right)\\
 & \leq & \kappa\, \liminf_{t\downarrow 0}{\rm dist} \left(\nabla g(\bar{x})w'+\nabla^2g(\bar{x})(d,d),\frac{\Lambda-g(\bar{x})-t \nabla g(\bar{x})d}{\frac{1}{2}t^2}\right)\\
 &=& \kappa\, {\rm dist}\left(\nabla g(\bar{x})w'+\nabla^2g(\bar{x})(d,d),\limsup_{t\downarrow 0}\frac{\Lambda-g(\bar{x})-t \nabla g(\bar{x})d}{\frac{1}{2}t^2}\right)\\
 &=& \kappa\, {\rm dist}\Big(\nabla g(\bar{x})w'+\nabla^2g(\bar{x})(d,d),T_{\Lambda}^2(g(\bar{x});\nabla g(\bar{x})d)\Big).
 \end{eqnarray*}
 \end{proof}
}\fi

\begin{proposition}\label{PropSecOrderTanSet}
  Let  $\bar{x}\in \mathcal{F}$ and suppose that  the set-valued map $M(x):=g(x)-\Lambda$ is metrically subregular at $(\bar{x},0)$ in direction $d$ with modulus $\kappa$. Then
  \begin{equation}\label{EqTanF}d\in T_{\mathcal F}(\bar x)\ \Leftrightarrow\ \nabla g(\bar x)d\in T_\Lambda(g(\bar x))\end{equation}
  and for any $d$ satisfying $g(\bar x)d\in T_\Lambda(g(\bar x))$, one has
  \begin{equation}\label{EqSecOrdTanSetF}T^2_{\mathcal{F}}(\bar x;d)=\{w\,|\, \nabla g(\bar x)w+\nabla^2g(\bar x)(d,d)\in T^2_\Lambda\big(g(\bar x);\nabla g(\bar x)d\big)\}=D^2M(\bar x,0;d)^{-1}(0)\end{equation}
  with $D^2M(\bar x,0;d)$ given by \eqref{EqSecOrdLin}. Moreover,
  \begin{align}
    \label{EqMetrSubrSecOrdLinMap}\dist(w ,T_{\cal F}^2(x;d))&\leq \kappa \dist\Big(\nabla g(\bar{x})w+\nabla g^2(\bar{x})(d,d), T^2_{\Lambda}\big(g(\bar{x});\nabla g(\bar{x})d\big) \Big)\\
    \nonumber &=\kappa\dist\big(0,D^2M(\bar x,0;d)(w)\big)\ \forall w\in  \mathbb{R}^n. \end{align}
\end{proposition}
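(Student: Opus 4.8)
The plan is to prove the three assertions in turn, relying only on the distance characterizations \eqref{dist1} and \eqref{dist2} of the contingent cone and the outer second-order tangent set, the directional metric subregularity estimate \eqref{EqDirMS}, and the interchange rule of Lemma \ref{lem-distance}. The last identity in \eqref{EqSecOrdTanSetF} is immediate from the definition \eqref{EqSecOrdLin} of $D^2M(\bar x,0;d)$, so what remains is the equivalence \eqref{EqTanF}, the first equality in \eqref{EqSecOrdTanSetF}, and the quantitative bound \eqref{EqMetrSubrSecOrdLinMap}. For \eqref{EqTanF} the implication ``$\Rightarrow$'' needs no constraint qualification: if $d\in T_{\mathcal F}(\bar x)$ pick $t_k\downarrow 0$, $d_k\to d$ with $g(\bar x+t_kd_k)\in\Lambda$, expand $g(\bar x+t_kd_k)=g(\bar x)+t_k\nabla g(\bar x)d+o(t_k)$, and deduce $\dist(g(\bar x)+t_k\nabla g(\bar x)d,\Lambda)=o(t_k)$, i.e. $\nabla g(\bar x)d\in T_\Lambda(g(\bar x))$ via \eqref{dist1}. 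For ``$\Leftarrow$'' note that the ray point $\bar x+td$ obeys $(\bar x+td)-\bar x=td\in V_{\rho,\delta}(d)$ for all small $t>0$, so \eqref{EqDirMS} together with the Lipschitz-with-constant-one property of $\dist(\cdot,\Lambda)$ gives $\dist(\bar x+td,\mathcal F)\le\kappa\,\dist(g(\bar x)+t\nabla g(\bar x)d,\Lambda)+o(t)$; choosing by \eqref{dist1} a sequence $t_k\downarrow 0$ along which the right-hand distance is $o(t_k)$ forces $\dist(\bar x+t_kd,\mathcal F)=o(t_k)$, i.e. $d\in T_{\mathcal F}(\bar x)$.

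The core is \eqref{EqMetrSubrSecOrdLinMap}. Fix $w\in\mathbb{R}^n$ and set $x(t):=\bar x+td+\tfrac12t^2w$. The first step is to check that $x(t)-\bar x\in V_{\rho,\delta}(d)$ for all sufficiently small $t>0$: for $d=0$ this is just $\tfrac12t^2w\in\rho\mathbb{B}$, while for $d\ne0$ one writes $x(t)-\bar x=t(d+\tfrac12tw)$, observes that $\|d+\tfrac12tw\|\to\|d\|$ and $(d+\tfrac12tw)/\|d+\tfrac12tw\|\to d/\|d\|$ as $t\downarrow0$, so both defining conditions of $V_{\rho,\delta}(d)$ hold for $t$ small. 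Hence \eqref{EqDirMS} applies at $x(t)$, and combining it with the second-order Taylor expansion $g(x(t))=g(\bar x)+t\nabla g(\bar x)d+\tfrac12t^2\big(\nabla g(\bar x)w+\nabla^2g(\bar x)(d,d)\big)+o(t^2)$ and the Lipschitz property of $\dist(\cdot,\Lambda)$ yields
\[\dist\big(x(t),\mathcal F\big)\le\kappa\,\dist\Big(g(\bar x)+t\nabla g(\bar x)d+\tfrac12t^2\big(\nabla g(\bar x)w+\nabla^2g(\bar x)(d,d)\big),\Lambda\Big)+o(t^2).\]
Dividing by $\tfrac12t^2$, rewriting both distances as distances to the difference-quotient sets $\big(\mathcal F-\bar x-td\big)/(\tfrac12t^2)$ and $\big(\Lambda-g(\bar x)-t\nabla g(\bar x)d\big)/(\tfrac12t^2)$, taking $\liminf_{t\downarrow0}$, and applying Lemma \ref{lem-distance} to each side (with the one-sided limit $t\downarrow0$, and recalling that the outer limits of these difference quotients are by definition $T^2_{\mathcal F}(\bar x;d)$ and $T^2_\Lambda(g(\bar x);\nabla g(\bar x)d)$) produces \eqref{EqMetrSubrSecOrdLinMap}; the term $o(t^2)/(\tfrac12t^2)\to0$ and so does not affect the $\liminf$.

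Finally, \eqref{EqSecOrdTanSetF} follows. The inclusion ``$\subseteq$'' again requires no constraint qualification: for $w\in T^2_{\mathcal F}(\bar x;d)$ take $t_k\downarrow0$, $v_k\to w$ with $\bar x+t_kd+\tfrac12t_k^2v_k\in\mathcal F$, use the Taylor expansion of $g$ above, and read off from \eqref{dist2} that $\nabla g(\bar x)w+\nabla^2g(\bar x)(d,d)\in T^2_\Lambda(g(\bar x);\nabla g(\bar x)d)$. The reverse inclusion ``$\supseteq$'' is a consequence of \eqref{EqMetrSubrSecOrdLinMap}: if $\nabla g(\bar x)w+\nabla^2g(\bar x)(d,d)\in T^2_\Lambda(g(\bar x);\nabla g(\bar x)d)$, the right-hand side of \eqref{EqMetrSubrSecOrdLinMap} is zero, hence $\dist(w,T^2_{\mathcal F}(\bar x;d))=0$, and since $T^2_{\mathcal F}(\bar x;d)$ is closed as an outer limit, $w\in T^2_{\mathcal F}(\bar x;d)$. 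This also gives the identification with $D^2M(\bar x,0;d)^{-1}(0)$ by \eqref{EqSecOrdLin}.

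The main obstacle here is not conceptual but organizational: one must carefully verify that the parabolic arc $x(t)=\bar x+td+\tfrac12t^2w$ stays inside the directional neighborhood $V_{\rho,\delta}(d)$ so that directional metric subregularity may be invoked, and then legitimize the exchange of $\liminf_{t\downarrow0}$ with the distance function, which is exactly the content of Lemma \ref{lem-distance}; a final point of care is that the $o(t^2)$ remainder from the second-order Taylor expansion of $g$ is absorbed into the estimate, which uses that $\dist(\cdot,\Lambda)$ is Lipschitz with constant one. Everything else is a routine translation of the defining sequences of the tangent objects.
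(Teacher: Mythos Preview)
Your approach is essentially identical to the paper's: both establish \eqref{EqMetrSubrSecOrdLinMap} by inserting the parabolic arc $\bar x+td+\tfrac12t^2w$ into the directional metric subregularity inequality, Taylor-expanding $g$, dividing by $\tfrac12t^2$, and invoking Lemma~\ref{lem-distance} to pass to the outer limits; both then read off the inclusion $\supseteq$ in \eqref{EqSecOrdTanSetF} from \eqref{EqMetrSubrSecOrdLinMap} and prove $\subseteq$ directly. The only difference is that the paper cites an external reference for \eqref{EqTanF}, whereas you supply a short direct argument, which is fine.

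There is one small technical slip. By definition the modulus $\kappa$ is the \emph{infimum} of all constants for which \eqref{EqDirMS} holds with some $\rho,\delta$; this infimum need not be attained. So when you write ``Hence \eqref{EqDirMS} applies at $x(t)$'' with the constant $\kappa$, you are implicitly assuming the infimum is achieved. The paper handles this by fixing an arbitrary $\kappa'>\kappa$, running the argument with $\kappa'$, and then letting $\kappa'\downarrow\kappa$ at the end to recover \eqref{EqMetrSubrSecOrdLinMap} with the sharp constant. Your proof of \eqref{EqTanF} is unaffected (only $o(t)$ behavior matters there), but for the quantitative bound \eqref{EqMetrSubrSecOrdLinMap} you should insert this $\kappa'$-device.
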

\begin{proof}
The equivalence \eqref{EqTanF} follows from \cite[Proposition {4.1}]{GO16-2}. Next we will prove the inequality \eqref{EqMetrSubrSecOrdLinMap}. Let  $w\in\Re^n$ be fixed and consider $\kappa'>\kappa$. It follows that for $t>0$ sufficiently small
 \begin{eqnarray*}
\lefteqn{{\rm dist}(\bar{x}+td+\frac{1}{2}t^2w,{\cal F})}\\
  &\leq & \kappa' {\rm dist}(g(\bar{x}+td+\frac{1}{2}t^2w),\Lambda)\\
 &=& \kappa'{\rm dist}\Big(g(\bar{x})+t\nabla g(\bar{x})d+\frac{1}{2}t^2\big(\nabla g(\bar{x})w+\nabla^2g(\bar{x})(d,d)\big)+o(t^2),\Lambda\Big)\\
 &=& \kappa'{\rm dist}\Big(g(\bar{x})+t\nabla g(\bar{x})d+\frac{1}{2}t^2\big(\nabla g(\bar{x})w+\nabla^2g(\bar{x})(d,d)\big),\Lambda\Big)+o(t^2),
 \end{eqnarray*}
 where the first and second equalities follow  from Taylor expansion and the Lipschitz continuity of the distance function. Dividing both sides of the above inequality by $\frac{1}{2} t^2$ we obtain
 \[
 {\rm dist}\left(w,\frac{{\cal F}-\bar{x}-t d}{\frac{1}{2}t^2}\right)\leq \kappa' {\rm dist} \left(\nabla g(\bar{x})w+\nabla^2 g(\bar{x})(d,d),\frac{\Lambda-g(\bar{x})-t \nabla g(\bar{x})d}{\frac{1}{2}t^2}\right)+\frac{o(t^2)}{\frac{1}{2}t^2}.
 \]
 Taking the inf-limits on the both sides of the above inequality and using Lemma \ref{lem-distance} yields
 \begin{eqnarray*}
\lefteqn{{\rm dist}(w,T_{\cal F}^2(\bar x;d))}\\
 &=&{\rm dist}\left(w,\limsup_{t\downarrow 0}\frac{{\cal F}-\bar{x}-t d}{\frac{1}{2}t^2}\right)=
 \liminf_{t\downarrow 0}{\rm dist}\left(w,\frac{{\cal F}-\bar{x}-t d}{\frac{1}{2}t^2}\right)\\
 & \leq & \kappa'\, \liminf_{t\downarrow 0}{\rm dist} \left(\nabla g(\bar{x})w+\nabla^2g(\bar{x})(d,d),\frac{\Lambda-g(\bar{x})-t \nabla g(\bar{x})d}{\frac{1}{2}t^2}\right)\\
 &=& \kappa'\, {\rm dist}\left(\nabla g(\bar{x})w+\nabla^2g(\bar{x})(d,d),\limsup_{t\downarrow 0}\frac{\Lambda-g(\bar{x})-t \nabla g(\bar{x})d}{\frac{1}{2}t^2}\right)\\
 &=& \kappa'\, {\rm dist}\Big(\nabla g(\bar{x})w+\nabla^2g(\bar{x})(d,d),T_{\Lambda}^2(g(\bar{x});\nabla g(\bar{x})d)\Big).
 \end{eqnarray*}
 Since $\kappa'>\kappa$ can be chosen arbitrarily close to $\kappa$, the bound \eqref{EqMetrSubrSecOrdLinMap} follows. From \eqref{EqMetrSubrSecOrdLinMap} we may conclude
 \[T^2_{\mathcal{F}}(\bar x;d)\supseteq \{w\,|\, \nabla g(\bar x)w+\nabla^2g(\bar x)(d,d)\in T^2_\Lambda\big(g(\bar x);\nabla g(\bar x)d\big)\}.\]
 There remains to show the reverse inclusion. Consider $w\in T^2_{\mathcal{F}}(\bar x;d)$ together with sequences $t_k\downarrow 0$ and $w_k\to w$ with $\bar x+t_k d+\frac 12t_k^2w_k\in \mathcal{F}$. Then
 \[g(\bar x+t_kd+\frac 12t_k^2w_k)=g(\bar x)+ t_k\nabla g(\bar x)d+\frac 12t_k^2\big(\nabla g(\bar x)w+\nabla^2g(\bar x)(d,d)\big)+o(t_k^2)\in\Lambda\]
 implying $\nabla g(\bar x)w+\nabla^2g(\bar x)(d,d)\in T^2_\Lambda\big(g(\bar x);\nabla g(\bar x)d\big)$. This shows the inclusion $\subseteq$ in \eqref{EqSecOrdTanSetF} and
  the proof of the proposition is complete.
\end{proof}
The chain rule \eqref{EqSecOrdTanSetF} was derived in \cite[Proposition 13.13]{RW98} under the assumption of metric regularity of $M$ and in \cite{Meh19} under (non-directional) metric subregularity, see also \cite{MoMoSa19}. For a related result under directional metric subregularity we refer to \cite[Proposition 4.1]{Pen98}.
\if{
Proposition \ref{PropSecOrderTanSet} tells us that the chain rule for second-order tangents derived in \cite[Proposition 13.13]{RW98} under the assumption of metric regularity of $M$ remains valid under the weaker assumption of directional metric subregularity.}
\fi


 It is known that the second-order tangent set may be empty even if the set considered is a convex set \cite{BS}. As a consequence of Proposition \ref{PropSecOrderTanSet}, we can show  that $T_{\cal F}^2$ and $T^2_{\Lambda}$ are either empty or nonempty at the same time under the metric subregularity.
 \begin{corollary}\label{CorNonEmpty}
  Suppose that  the set-valued map $M(x):=g(x)-\Lambda$ is metrically subregular at $(\bar{x},0)$ in direction $d$ with $\bar{x}\in \mathcal{F}$ and $\nabla g(\bar x)d\in T_{\Lambda}(g(\bar{x}))$. Then $T_{\cal F}^2(x;d)\neq \emptyset$ if and only if $T^2_{\Lambda}\big(g(\bar{x});\nabla g(\bar{x})d\big)\neq \emptyset$.
 \end{corollary}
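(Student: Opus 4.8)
The plan is to deduce Corollary \ref{CorNonEmpty} directly from Proposition \ref{PropSecOrderTanSet}, which has just been established and whose hypotheses are exactly those assumed in the corollary. Since $M$ is metrically subregular at $(\bar x,0)$ in direction $d$ and $\nabla g(\bar x)d\in T_\Lambda(g(\bar x))$, equation \eqref{EqSecOrdTanSetF} gives the explicit description
\[T^2_{\mathcal F}(\bar x;d)=\{w\,|\,\nabla g(\bar x)w+\nabla^2 g(\bar x)(d,d)\in T^2_\Lambda\big(g(\bar x);\nabla g(\bar x)d\big)\},\]
so the whole statement reduces to a set-theoretic observation about when such a preimage-type set is nonempty.

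The \textbf{easy direction} is ``$T^2_{\mathcal F}(\bar x;d)\neq\emptyset\ \Rightarrow\ T^2_\Lambda\big(g(\bar x);\nabla g(\bar x)d\big)\neq\emptyset$'': if $w\in T^2_{\mathcal F}(\bar x;d)$, then by \eqref{EqSecOrdTanSetF} the vector $\nabla g(\bar x)w+\nabla^2 g(\bar x)(d,d)$ lies in $T^2_\Lambda\big(g(\bar x);\nabla g(\bar x)d\big)$, so the latter is nonempty. This needs no subregularity beyond what yields \eqref{EqSecOrdTanSetF}; it is just membership chasing.

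The \textbf{converse direction} is where the metric subregularity does real work, and this is the main (though still modest) obstacle: from $T^2_\Lambda\big(g(\bar x);\nabla g(\bar x)d\big)\neq\emptyset$ I must produce some $w\in\Re^n$ with $\nabla g(\bar x)w+\nabla^2 g(\bar x)(d,d)\in T^2_\Lambda\big(g(\bar x);\nabla g(\bar x)d\big)$. The clean way is to invoke the estimate \eqref{EqMetrSubrSecOrdLinMap}: pick any $v\in T^2_\Lambda\big(g(\bar x);\nabla g(\bar x)d\big)$, and since $\nabla g(\bar x)$ has full... — more precisely, choose $w$ so that $\nabla g(\bar x)w$ approximates $v-\nabla^2g(\bar x)(d,d)$ as well as possible; then \eqref{EqMetrSubrSecOrdLinMap} bounds $\dist(w,T^2_{\mathcal F}(\bar x;d))$ by $\kappa$ times $\dist\big(\nabla g(\bar x)w+\nabla^2g(\bar x)(d,d),T^2_\Lambda(g(\bar x);\nabla g(\bar x)d)\big)$. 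The subtlety is that this right-hand distance need not be zero if $v-\nabla^2g(\bar x)(d,d)$ is not in the range of $\nabla g(\bar x)$. However, the cleanest route avoids range considerations entirely: simply observe that \eqref{EqMetrSubrSecOrdLinMap} forces $T^2_{\mathcal F}(\bar x;d)=\emptyset$ to imply $\dist(w,\emptyset)=+\infty\leq\kappa\,\dist\big(0,D^2M(\bar x,0;d)(w)\big)$ for all $w$, hence $D^2M(\bar x,0;d)(w)=\emptyset$ for every $w$, i.e. $\nabla g(\bar x)w+\nabla^2g(\bar x)(d,d)\notin T^2_\Lambda\big(g(\bar x);\nabla g(\bar x)d\big)$ for all $w$; taking $w=0$ already shows $\nabla^2g(\bar x)(d,d)\notin T^2_\Lambda(\cdots)$, but we need that the whole set is empty. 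Using the convention $\dist(\cdot,\emptyset)=+\infty$, the inequality $+\infty\leq\kappa\dist\big(\nabla g(\bar x)w+\nabla^2g(\bar x)(d,d),T^2_\Lambda(\cdots)\big)$ holding for all $w$ forces $T^2_\Lambda\big(g(\bar x);\nabla g(\bar x)d\big)\cap\big(\nabla g(\bar x)\Re^n+\nabla^2g(\bar x)(d,d)\big)=\emptyset$ — but more to the point, it forces $T^2_\Lambda(\cdots)=\emptyset$ because otherwise the right side would be finite for at least one choice of $w$... that still requires the range to meet the set.

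So the honest plan is: prove the contrapositive of the converse, namely assume $T^2_{\mathcal F}(\bar x;d)=\emptyset$ and show $T^2_\Lambda\big(g(\bar x);\nabla g(\bar x)d\big)=\emptyset$. By \eqref{EqMetrSubrSecOrdLinMap}, for every $w$ we have $+\infty=\dist(w,\emptyset)\leq\kappa\,\dist\big(0,D^2M(\bar x,0;d)(w)\big)$, which is only possible if $D^2M(\bar x,0;d)(w)=\emptyset$ for every $w$, i.e. $\nabla g(\bar x)w+\nabla^2g(\bar x)(d,d)\notin T^2_\Lambda\big(g(\bar x);\nabla g(\bar x)d\big)$ for all $w\in\Re^n$. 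Now suppose toward a contradiction that some $v\in T^2_\Lambda\big(g(\bar x);\nabla g(\bar x)d\big)$; by the recession property of Proposition \ref{PropRegTanCone}, $v+\widehat T_\Lambda\big(g(\bar x);\nabla g(\bar x)d\big)\subseteq T^2_\Lambda\big(g(\bar x);\nabla g(\bar x)d\big)$, and since $\nabla g(\bar x)d\in T_\Lambda(g(\bar x))$ one has $T_\Lambda(g(\bar x))$-directional structure... — actually the simplest contradiction: the displayed fact ``$\nabla g(\bar x)w+\nabla^2g(\bar x)(d,d)\notin T^2_\Lambda(\cdots)$ for all $w$'' does not immediately contradict $v\in T^2_\Lambda(\cdots)$ unless $v-\nabla^2g(\bar x)(d,d)\in\range\nabla g(\bar x)$. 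I will therefore present the argument in the form actually needed: I claim \eqref{EqMetrSubrSecOrdLinMap} together with the finiteness of $\dist(w,T^2_{\mathcal F}(\bar x;d))$ whenever $T^2_{\mathcal F}(\bar x;d)\neq\emptyset$ gives both implications at once once one notes that a nonempty $T^2_\Lambda(\cdots)$ combined with surjectivity-free reasoning via \eqref{EqSecOrdTanSetF} — the correct and shortest writeup is: $(\Leftarrow)$ holds because if $T^2_\Lambda\big(g(\bar x);\nabla g(\bar x)d\big)\neq\emptyset$ then the right-hand side of \eqref{EqMetrSubrSecOrdLinMap} is finite for $w=0$, forcing $\dist(0,T^2_{\mathcal F}(\bar x;d))<\infty$, hence $T^2_{\mathcal F}(\bar x;d)\neq\emptyset$; and $(\Rightarrow)$ is the membership-chase above. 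I expect the write-up to be short; the only point requiring care is the convention $\dist(\cdot,\emptyset)=+\infty$ and noting that \eqref{EqMetrSubrSecOrdLinMap} remains valid (as an inequality in $[0,\infty]$) with that convention, which is exactly how Proposition \ref{PropSecOrderTanSet} was proved.

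\begin{proof}
By Proposition \ref{PropSecOrderTanSet}, inequality \eqref{EqMetrSubrSecOrdLinMap} holds, where we use the convention $\dist(x,\emptyset)=+\infty$ for every $x$; note that with this convention \eqref{EqMetrSubrSecOrdLinMap} was in fact established for all $w\in\Re^n$ regardless of whether the sets involved are empty.

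Suppose first that $T^2_\Lambda\big(g(\bar{x});\nabla g(\bar{x})d\big)\neq\emptyset$ and pick any $v\in T^2_\Lambda\big(g(\bar{x});\nabla g(\bar{x})d\big)$. Taking $w=0$ in \eqref{EqMetrSubrSecOrdLinMap} we obtain
\[\dist\big(0,T^2_{\cal F}(\bar x;d)\big)\leq\kappa\,\dist\Big(\nabla^2 g(\bar{x})(d,d),T^2_\Lambda\big(g(\bar{x});\nabla g(\bar{x})d\big)\Big)\leq\kappa\,\big\|\nabla^2 g(\bar x)(d,d)-v\big\|<+\infty,\]
so $T^2_{\cal F}(\bar x;d)\neq\emptyset$.

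Conversely, suppose that $T^2_{\cal F}(\bar x;d)\neq\emptyset$ and pick $w\in T^2_{\cal F}(\bar x;d)$. By \eqref{EqSecOrdTanSetF},
\[\nabla g(\bar x)w+\nabla^2 g(\bar x)(d,d)\in T^2_\Lambda\big(g(\bar{x});\nabla g(\bar{x})d\big),\]
so $T^2_\Lambda\big(g(\bar{x});\nabla g(\bar{x})d\big)\neq\emptyset$. This completes the proof.
\end{proof}
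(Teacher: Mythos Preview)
Your final proof is correct and takes essentially the same approach as the paper: the forward direction uses the finite right-hand side of \eqref{EqMetrSubrSecOrdLinMap} to force $T^2_{\cal F}(\bar x;d)\neq\emptyset$, and the reverse direction is membership-chasing via \eqref{EqSecOrdTanSetF}. The paper phrases the first direction as a contradiction (left side infinite, right side finite) whereas you plug in $w=0$ and bound explicitly, but this is a cosmetic difference only.
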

 \begin{proof}
 Suppose  first that   $T^2_{\Lambda}\big(g(\bar{x});\nabla g(\bar{x})d\big)\neq \emptyset$. Then $T_{\cal F}^2(x;d)\neq \emptyset$
 by virtue of  \eqref{EqMetrSubrSecOrdLinMap}, because otherwise the left-hand side of \eqref{EqMetrSubrSecOrdLinMap} must be equal to infinity while the right-hand side is finite which is impossible.
 The reverse statement  follows  immediately from \eqref{EqSecOrdTanSetF}.
 \end{proof}

 \subsection{On  estimates of the  normal cone of  the   tangent  sets}

In this subsection we give some estimates on the limiting normal cone to the first and the second-order tangent set  which will be used in the necessary optimality condition we are developing.

 \begin{lemma}\label{relationship-normal-cone} Let $S$ be a closed subset  in $\Re^n$, $x\in S$, $d\in T_S(x)$ and $w\in T^2_{S}(x,d)$. Then
   \begin{eqnarray}\label{eq:1new}
 && N_{T_S(x)}(d)\subseteq N_{S}(x;d), \\
&&  N_{{T^2_{S}(x;d)}}(w)\subseteq N_{S}(x;d). \label{eq:2new}
 \end{eqnarray}
 \end{lemma}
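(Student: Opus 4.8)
Both inclusions have the same structure, and I would first reduce them to two core claims: (a) $\widehat N_{T_S(x)}(d')\subseteq N_S(x;d')$ for every $d'\in T_S(x)$, and (b) $\widehat N_{T^2_S(x;d)}(w')\subseteq N_S(x;d)$ for every $w'\in T^2_S(x;d)$. Granting these, the lemma follows by a limiting argument. By the definition of the limiting normal cone, $N_{T_S(x)}(d)=\limsup_{d'\to d,\,d'\in T_S(x)}\widehat N_{T_S(x)}(d')$, and a routine diagonal argument (a companion of Proposition \ref{LemRobustDirNormalCone}) shows that $d'\mapsto N_S(x;d')$ is outer semicontinuous; hence, using (a), $N_{T_S(x)}(d)\subseteq\limsup_{d'\to d}N_S(x;d')\subseteq N_S(x;d)$. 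Similarly $N_{T^2_S(x;d)}(w)=\limsup_{w'\to w,\,w'\in T^2_S(x;d)}\widehat N_{T^2_S(x;d)}(w')$, and since $N_S(x;d)$ is closed (being an outer limit), (b) gives $N_{T^2_S(x;d)}(w)\subseteq N_S(x;d)$.

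The two core claims are proved by the same blow-up argument; I would describe it for (b). Let $v\in\widehat N_{T^2_S(x;d)}(w')$. Using \eqref{dist2}, pick $t_k\downarrow 0$ and $w_k\to w'$ with $x+t_kd+\frac12 t_k^2w_k\in S$, and put $S_k:=\frac{S-x-t_kd}{t_k^2/2}$, a closed subset of $\mathbb{R}^n$. Since the space of closed subsets of $\mathbb{R}^n$ is compact in the Painlev\'{e}--Kuratowski topology (\cite[Ch.~4]{RW98}), after passing to a subsequence we may assume $S_k\to S_\infty$. Then $w'\in S_\infty$ (as $w_k\in S_k$, $w_k\to w'$) and $S_\infty=\limsup_k S_k\subseteq\limsup_{t\downarrow 0}\frac{S-x-td}{t^2/2}=T^2_S(x;d)$; since $w'\in S_\infty$, the inclusion $S_\infty\subseteq T^2_S(x;d)$ yields $v\in\widehat N_{T^2_S(x;d)}(w')\subseteq\widehat N_{S_\infty}(w')$. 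Now I would invoke the classical fact that the limiting normal cone is the outer limit of proximal normal cones (\cite{RW98,Mor}): there are $w^{(i)}\to w'$ in $S_\infty$, $v^{(i)}\to v$ and $\lambda_i>0$ with $w^{(i)}$ the \emph{unique} nearest point of $S_\infty$ to $p_i:=w^{(i)}+\lambda_i v^{(i)}$ (uniqueness is obtained by halving $\lambda_i$). For each $i$ choose $q^{(i)}_k\in\mathrm{proj}_{S_k}(p_i)$; these are bounded (since $\|q^{(i)}_k-p_i\|=\mathrm{dist}(p_i,S_k)\to\mathrm{dist}(p_i,S_\infty)$), every cluster point of $(q^{(i)}_k)_k$ lies in $S_\infty$ and realizes $\mathrm{dist}(p_i,S_\infty)$, hence equals $w^{(i)}$, so $q^{(i)}_k\to w^{(i)}$ as $k\to\infty$, while $\lambda_i^{-1}(p_i-q^{(i)}_k)\in\widehat N_{S_k}(q^{(i)}_k)$ (a proximal, hence regular, normal) and $\lambda_i^{-1}(p_i-q^{(i)}_k)\to v^{(i)}$. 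A diagonal choice $k=k(i)\uparrow\infty$ then gives $w'_{k(i)}\in S_{k(i)}$ with $w'_{k(i)}\to w'$ and $\nu_{k(i)}\in\widehat N_{S_{k(i)}}(w'_{k(i)})=\widehat N_S\big(x+t_{k(i)}d+\frac12 t_{k(i)}^2 w'_{k(i)}\big)$ with $\nu_{k(i)}\to v$; writing $x+t_{k(i)}d+\frac12 t_{k(i)}^2 w'_{k(i)}=x+t_{k(i)}d_{k(i)}$ with $d_{k(i)}:=d+\frac12 t_{k(i)}w'_{k(i)}\to d$, the definition of the directional limiting normal cone gives $v\in N_S(x;d)$. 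For claim (a) I would repeat the argument with the first-order blow-up $S_k:=\frac{S-x}{t_k}$ built from $t_k\downarrow 0$, $d_k\to d'$, $x+t_kd_k\in S$: one gets $d'\in S_\infty\subseteq T_S(x)$, $v\in\widehat N_{S_\infty}(d')$, and undoing the rescaling produces regular normals of $S$ at points $x+t_{k(i)}w'_{k(i)}$ with $w'_{k(i)}\to d'$, so $v\in N_S(x;d')$.

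The reductions in the first paragraph are routine. I expect the decisive difficulty to be the blow-up step, and within it the transfer of normality information from the limit set $S_\infty$ back to the approximating sets $S_k$: Fr\'{e}chet normals are unstable under set convergence, so this cannot be done with them directly, and the device is to use \emph{proximal} normals of the \emph{single} set $S_\infty$, which are produced by metric projections and hence survive the passage to the limit because the projection of a fixed point onto $S_k$ converges to its projection onto $S_\infty$. The remaining care is in checking that the rescaled copies of $S$ do converge (compactness of the hyperspace) and that their limit is trapped inside the relevant first- or second-order tangent set.
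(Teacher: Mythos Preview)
Your argument is correct, and the core idea---rescaling $S$ to $S_k=\frac{S-x-t_kd}{t_k^2/2}$ and identifying $\widehat N_{S_k}(\cdot)$ with $\widehat N_S(\cdot)$ via change of coordinates---is exactly the one the paper uses. The difference is only in length: the paper dispatches the whole thing in two lines by invoking \cite[Exercise~6.18]{RW98}, which states that whenever $C=\limsup_k C_k$ and $v\in N_C(w)$, there exist $w_k\in C_k$, $w_k\to w$, and $v_k\in\widehat N_{C_k}(w_k)$ with $v_k\to v$. Applied with $C_k=S_k$ and $C=T^2_S(x;d)$ (respectively $C_k=(S-x)/t_k$ and $C=T_S(x)$ for the first inclusion), this immediately produces regular normals to $S$ at points $x+t_k(d+\tfrac12 t_kw_k)$ converging to $v$, whence $v\in N_S(x;d)$. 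Your preliminary reduction to Fr\'echet normals, the passage to a Painlev\'e--Kuratowski convergent subsequence, and the proximal-normal transfer together amount to a self-contained proof of that exercise; you gain independence from the black-box citation, at the cost of brevity.
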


 \begin{proof}
  We only prove (\ref{eq:2new}) since  (\ref{eq:1new}) can be proved similarly.
  Take $v\in N_{{T^2_{S}(x,d)}}(w)$. Note that $T_S^2(x;d)=\limsup\limits_{t\downarrow 0}\frac{S-x-td}{\frac{1}{2}t^2}$. It follows from \cite[Exercise 6.18]{RW98} that there exists $t_k\downarrow 0$, $w_k\in T_k:=\frac{S-x-t_kd}{\frac{1}{2}t_k^2}$ and $v_k\in \widehat N_{T_k}(w_k)$ with $v_k\to v$ and $w_k\to w$. Since
 $T_k=\{w|x+t_kd+\frac{1}{2}t_k^2w\in S\}$, by the change of coordinates formula in \cite[Exercise 6.7]{RW98} we have
$v_k\in \widehat{N}_{T_k}(w_k)=\widehat{N}_S\big(x+t_kd+\frac{1}{2}t_k^2w_k\big)=
 \widehat{N}_S\big(x+t_k(d+\frac{1}{2}t_kw_k)\big).$
 Hence $v\in N_{S}(x;d)$.
 \end{proof}

 The inclusion  (\ref{eq:1new}) can be strict. For example, take $S:=\{0,1,\frac{1}{2},\frac{1}{n},\dots\}$. It is easy to see that  $T_{S}(0)=\Re_+$ and $N_{T_{S}(0)}(0)=\Re_-$. Take $d=1\in T_{S}(0)$. Then
 $N_{S}(0;d)\supseteq \limsup_{n\to \infty}\widehat{N}_{S}(\frac{1}{n})=\Re$,  since $\frac{1}{n}$ is an isolated point in $S$. So $N_{S}(0;d)=\Re.$ Hence
 $\{0\}=N_{T_{S}(x)}(d)\varsubsetneq N_{S}(x;d)=\Re$ at $x=0$.

{By \eqref{EqDirLimNormalConeConvex}, the inclusion (\ref{eq:1new}) holds as an equality whenever $S$ is convex.}
  However (\ref{eq:2new}) can fail to be an equality even if $S$ is  polyhedral; see e.g. Example \ref{ex2.2}.
   \begin{example}\label{ex2.2}
Let  $S=\Re^2_+$ and  $x=(0,0)$. Then $T_S(x)=S$. Take $d=(1,0)\in T_S(x)$. Then $T_S^2(x;d)=T_{T_S(x)}(d)=T_S(d)=\Re\times \Re_+$. Take $w=(0,1)\in T^2_S(x;d)$. Then
 $
 N_{T^2_S(x;d)}(w)=
{\{(0,0)\}}
 $
 and
 $N_{T_S(x)}(d)=N_S(d)=\{0\}\times \Re_-.$
 So
 $  N_{T^2_S(x;d)}(w)\varsubsetneq  N_{T_S(x)}(d).$
 \end{example}

 \section{Primal form of second-order necessary optimality conditions}

In this section we derive the primal form of second-order necessary optimality conditions for the general problem (P) under directional metric subregularity.
\if{Such conditions are derived in \cite[Lemma 4.4]{BS} under Robinson's constraint qualification for the case where $\Lambda$ is  convex.

 Define the critical cone at $\bar x$ as:
 \[
 C(\bar{x}):= \Big\{ d \, | \, \nabla g(\bar x)d \in T_{\Lambda}(g(\bar x)),
 \nabla f(\bar x) d \leq 0 \Big\}.
 \]
 \begin{lemma} \label{Lem3.1}Let  $\bar{x}$ be a locally optimal solution of (P). Suppose that the feasible set mapping $g(x)-\Lambda$ is metrically subregular at $(\bar x, 0)$ in direction $d\in C(\bar x)$. Then $\nabla f(\bar x) d =0$.
 \end{lemma}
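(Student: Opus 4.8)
The plan is to show that the inequality $\nabla f(\bar x)d\le 0$, which is part of the definition of $d\in C(\bar x)$, cannot be strict. Since $d\in C(\bar x)$ we have both $\nabla g(\bar x)d\in T_\Lambda(g(\bar x))$ and $\nabla f(\bar x)d\le 0$, so it suffices to establish the reverse inequality $\nabla f(\bar x)d\ge 0$. The idea is the classical one: produce a sequence of feasible points approaching $\bar x$ tangentially along $d$, and compare objective values using local optimality and a first-order Taylor expansion. The role of the directional metric subregularity hypothesis is exactly to guarantee that such a feasible sequence exists.

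First I would invoke \eqref{EqTanF} of Proposition \ref{PropSecOrderTanSet}: because $M(x)=g(x)-\Lambda$ is metrically subregular at $(\bar x,0)$ in direction $d$ and $\nabla g(\bar x)d\in T_\Lambda(g(\bar x))$, we get $d\in T_{\mathcal F}(\bar x)$. By the distance characterization \eqref{dist1} of the contingent cone, there is a sequence $t_k\downarrow 0$ with ${\rm dist}(\bar x+t_kd,\mathcal F)=o(t_k)$. Since $\mathcal F$ is closed, for each $k$ we may pick $x_k\in\mathcal F$ realizing this distance, so that $x_k=\bar x+t_kd+e_k$ with $\|e_k\|=o(t_k)$; in particular $x_k\to\bar x$. (Alternatively, one can bypass the citation and argue directly: a Taylor expansion of $g$ together with the $1$-Lipschitz property of ${\rm dist}(\cdot,\Lambda)$ turns $\nabla g(\bar x)d\in T_\Lambda(g(\bar x))$ into ${\rm dist}(g(\bar x+t_kd),\Lambda)=o(t_k)$ along a suitable $t_k\downarrow 0$, and then the estimate \eqref{EqDirMS} applied to the points $\bar x+t_kd\in\bar x+V_{\rho,\delta}(d)$ gives ${\rm dist}(\bar x+t_kd,\mathcal F)=o(t_k)$; this is essentially the computation in the proof of Proposition \ref{PropSecOrderTanSet}.)

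Finally, since $\bar x$ is a local minimizer and $x_k\to\bar x$ with $x_k\in\mathcal F$, we have $f(x_k)\ge f(\bar x)$ for all $k$ large. Expanding,
\[f(x_k)=f(\bar x+t_kd+e_k)=f(\bar x)+t_k\nabla f(\bar x)d+o(t_k),\]
because $\langle\nabla f(\bar x),e_k\rangle=o(t_k)$ and the Taylor remainder is $o(\|t_kd+e_k\|)=o(t_k)$. Thus $t_k\nabla f(\bar x)d+o(t_k)\ge 0$; dividing by $t_k$ and letting $k\to\infty$ gives $\nabla f(\bar x)d\ge 0$, and combined with $\nabla f(\bar x)d\le 0$ this yields $\nabla f(\bar x)d=0$. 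The argument has no real obstacle; the only point requiring a little care is the existence of the feasible curve $x_k=\bar x+t_kd+o(t_k)$, which is precisely where the directional metric subregularity assumption is used, and the bookkeeping that the several $o(t_k)$ remainders (from the subregularity estimate and from the linearizations of $g$ and $f$) combine into a single $o(t_k)$ term, which follows from continuity of $\nabla g,\nabla f$ and Lipschitz continuity of the distance function.
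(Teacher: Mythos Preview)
Your proof is correct and follows essentially the same route as the paper: both use Proposition \ref{PropSecOrderTanSet} (specifically \eqref{EqTanF}) to deduce $d\in T_{\mathcal F}(\bar x)$ from $\nabla g(\bar x)d\in T_\Lambda(g(\bar x))$, and then combine the resulting first-order necessary inequality $\nabla f(\bar x)d\ge 0$ with $\nabla f(\bar x)d\le 0$ from the definition of $C(\bar x)$. The only difference is that the paper simply cites the standard fact $\nabla f(\bar x)d\ge 0$ for all $d\in T_{\mathcal F}(\bar x)$ at a local minimizer, whereas you unpack that step explicitly via a feasible sequence and a Taylor expansion.
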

  \begin{proof} Since $\bar x$ is a locally optimal solution of (P), we have
 $$\nabla f(\bar x)d\geq 0 \qquad \forall d \in T_{\cal F}(\bar x).$$
By Proposition \ref{PropSecOrderTanSet} we have
 $d \in T_{\cal F}(\bar x) \Longleftrightarrow \nabla g(\bar x) d\in T_\Lambda(g(\bar x))$ and hence $\nabla f(\bar x)d\geq 0$. By definition of the critical cone, it follows that $\nabla f(\bar x)d= 0$.
 \end{proof}
 The following second-order necessary optimality condition in primal form improves \cite[Lemma 3.44]{BS} in that $\Lambda$ does not need to be convex and the result holds under the {directional} metric subregularity instead of Robinson's constraint qualification.
 \begin{theorem} \label{Thm3.1}
Let  $\bar{x}$ be a locally optimal solution of (P). Suppose that  the feasible set mapping $g(x)-\Lambda$ is metrically subregular at $(\bar x, 0)$ in direction  $d\in C(\bar{x})$.
 Then
 \[
 \nabla f(\bar x)w+\nabla^2f(\bar x)(d,d) \geq 0, \quad \forall w\in T^2_{\cal F}(\bar{x};d).
 \]
 \end{theorem}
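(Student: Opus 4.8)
The plan is to combine the sequential (parabolic) description of the outer second-order tangent set with a second-order Taylor expansion of $f$ along a feasible parabolic arc, using Lemma \ref{Lem3.1} to eliminate the first-order term.

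If $T^2_{\cal F}(\bar x;d)=\emptyset$ there is nothing to prove, so fix $w\in T^2_{\cal F}(\bar x;d)$. By the definition of the outer second-order tangent set there are sequences $t_k\downarrow 0$ and $w_k\to w$ with $x_k:=\bar x+t_kd+\frac12 t_k^2 w_k\in\mathcal F$ for all $k$. Since $x_k\to\bar x$ and $\bar x$ is a local minimizer of (P), we have $f(x_k)\ge f(\bar x)$ for all $k$ large. Twice continuous differentiability of $f$ gives
\[f(x_k)=f(\bar x)+\nabla f(\bar x)\big(t_kd+\tfrac12 t_k^2 w_k\big)+\tfrac12\big(t_kd+\tfrac12 t_k^2 w_k\big)^T\nabla^2 f(\bar x)\big(t_kd+\tfrac12 t_k^2 w_k\big)+o(t_k^2),\]
where the first-order part equals $t_k\nabla f(\bar x)d+\frac12 t_k^2\nabla f(\bar x)w_k$ and the quadratic part equals $\frac12 t_k^2\, d^T\nabla^2 f(\bar x)d+o(t_k^2)$, the cross and pure-$w_k$ contributions being of order $O(t_k^3)$.

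Next I would invoke Lemma \ref{Lem3.1}: since $M(x)=g(x)-\Lambda$ is metrically subregular at $(\bar x,0)$ in the direction $d\in C(\bar x)$, it yields $\nabla f(\bar x)d=0$, so the first-order part collapses to $\frac12 t_k^2\nabla f(\bar x)w_k$. Hence
\[0\le f(x_k)-f(\bar x)=\tfrac12 t_k^2\big(\nabla f(\bar x)w_k+\nabla^2 f(\bar x)(d,d)\big)+o(t_k^2).\]
Dividing by $\frac12 t_k^2$ and letting $k\to\infty$ (using $w_k\to w$) gives $\nabla f(\bar x)w+\nabla^2 f(\bar x)(d,d)\ge 0$, which is the assertion.

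The argument is essentially routine; the points requiring care are: (i) one must use the sequential/parabolic characterization of $T^2_{\cal F}(\bar x;d)$, noting that the arc $x_k$ lies \emph{exactly} in $\mathcal F$, so — in contrast to the estimate in Proposition \ref{PropSecOrderTanSet} — no distance-to-$\mathcal F$ error term enters; (ii) the directional metric subregularity hypothesis is used only indirectly, through Lemma \ref{Lem3.1}, to guarantee $\nabla f(\bar x)d=0$, for otherwise the divided inequality would carry the unbounded term $t_k\nabla f(\bar x)d/(\tfrac12 t_k^2)$ and the conclusion would break down. I do not expect a genuine obstacle beyond the bookkeeping of the $o(t_k^2)$ remainder.
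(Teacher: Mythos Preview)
Your proof is correct and follows essentially the same approach as the paper's own argument: pick a parabolic sequence in $\mathcal F$ realizing $w\in T^2_{\cal F}(\bar x;d)$, use local optimality, Taylor-expand $f$ to second order, invoke Lemma \ref{Lem3.1} to kill the $t_k\nabla f(\bar x)d$ term, divide by $\tfrac12 t_k^2$ and pass to the limit. The only cosmetic difference is that the paper writes the expansion directly in the compressed form $f(\bar x)+t_n\nabla f(\bar x)d+\tfrac12 t_n^2(\nabla f(\bar x)w_n+\nabla^2 f(\bar x)(d,d))+o(t_n^2)$ rather than first expanding the full quadratic in $t_kd+\tfrac12 t_k^2 w_k$ and then absorbing the higher-order pieces.
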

 \begin{proof}
 There is nothing to prove if $T^2_{\cal F}(\bar{x};d)$ is empty. Hence let us consider the case that
 $T^2_{\cal F}(\bar{x};d)\neq \emptyset$.
Let $w\in T^2_{\cal F}(\bar x;d)$. Then by definition there exists $t_n\downarrow 0$ and $w_n\to w$
 satisfying $\bar x+t_nd+\frac{1}{2}t_n^2w_n\in {\cal F}$. The local optimality of $\bar x$
 ensures that
 $
 f(\bar x+t_nd+\frac{1}{2}t_n^2w_n) \geq f(\bar x)
 $
 whenever $n$ is large enough. Hence by using the second order Taylor expansion of $f$ at $\bar x$ we have
 \begin{eqnarray}
 f(\bar x)
 & \leq & f(\bar x+t_nd+\frac{1}{2}t_n^2w_n) \nonumber \\
 &=& f(\bar x)+t_n \nabla f(\bar x)d + \frac{1}{2}t^2_n
     \Big( \nabla f(\bar x) {w_n}+\nabla^2f(\bar{x})(d,d)\Big)+o(t^2_n) \nonumber \\
 &=& f(\bar{x})+\frac{1}{2}t^2_n\Big({\nabla} f(\bar{x}) {w_n} + {\nabla}^2f(\bar{x})(d,d)\Big)
     + o(t^2_n), \label{eq-1}
 \end{eqnarray}
 where in the second equality we have used the fact that $\nabla f(\bar{x})d=0$ by Lemma \ref{Lem3.1}.
 Dividing by $t_n^2$ on both sides of
 (\ref{eq-1}) and taking the limit yield the desired result.
 \end{proof}
}\fi
{Recall the following basic second-order necessary condition.
\begin{theorem}[{cf. \cite[Corollary 1.3]{Pen94}}]\label{ThBasicSecOrd}Let  $\bar{x}$ be a locally optimal solution of (P).
 Then for all $d\in T_{\cal F}(\bar x)$ with $\nabla f(\bar x)d=0$ one has
 \[
 \nabla f(\bar x)w+\nabla^2f(\bar x)(d,d) \geq 0, \quad \forall w\in T^2_{\cal F}(\bar{x};d).
 \]
 \end{theorem}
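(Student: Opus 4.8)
The plan is to argue directly from the definition of the outer second-order tangent set together with a second-order Taylor expansion of $f$; no constraint qualification is needed here, since the statement refers only to the feasible set $\mathcal F$ and its tangent sets, not to the defining system $g(x)\in\Lambda$. First I would dispose of the trivial case: if $T^2_{\cal F}(\bar x;d)=\emptyset$ there is nothing to prove, so assume $w\in T^2_{\cal F}(\bar x;d)$. By the definition of $T^2_{\cal F}(\bar x;d)$ there are sequences $t_k\downarrow 0$ and $v_k\to w$ with $x_k:=\bar x+t_kd+\tfrac12 t_k^2 v_k\in\mathcal F$ for all $k$. Since $\bar x$ is a local minimizer and $x_k\to\bar x$, we have $f(x_k)\ge f(\bar x)$ for all $k$ sufficiently large.

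Next I would expand $f$ around $\bar x$. Writing $h_k:=x_k-\bar x=t_kd+\tfrac12 t_k^2v_k$, we have $\norm{h_k}=O(t_k)$, and the second-order Taylor expansion gives
\[f(x_k)=f(\bar x)+\nabla f(\bar x)h_k+\tfrac12 h_k^T\nabla^2 f(\bar x)h_k+o(\norm{h_k}^2).\]
Because $v_k$ is bounded, $\nabla f(\bar x)h_k=t_k\nabla f(\bar x)d+\tfrac12 t_k^2\nabla f(\bar x)v_k$, while $h_k^T\nabla^2f(\bar x)h_k=t_k^2\nabla^2 f(\bar x)(d,d)+o(t_k^2)$ and $o(\norm{h_k}^2)=o(t_k^2)$, so
\[f(x_k)=f(\bar x)+t_k\nabla f(\bar x)d+\tfrac12 t_k^2\big(\nabla f(\bar x)v_k+\nabla^2 f(\bar x)(d,d)\big)+o(t_k^2).\]
Invoking the hypothesis $\nabla f(\bar x)d=0$, the first-order term drops out.

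Finally, combining $f(x_k)\ge f(\bar x)$ with the last display yields
\[0\le \tfrac12 t_k^2\big(\nabla f(\bar x)v_k+\nabla^2 f(\bar x)(d,d)\big)+o(t_k^2);\]
dividing by $\tfrac12 t_k^2$ and letting $k\to\infty$, using $v_k\to w$, gives $\nabla f(\bar x)w+\nabla^2 f(\bar x)(d,d)\ge 0$, which is the asserted inequality. The only point requiring a little care is the bookkeeping in the Taylor expansion — in particular, that the cubic and quartic contributions coming from the $\tfrac12 t_k^2 v_k$ part of $h_k$ are absorbed into $o(t_k^2)$ because $v_k$ remains bounded — but this is entirely routine, so there is no genuine obstacle. (I would also remark that whenever $T^2_{\cal F}(\bar x;d)\neq\emptyset$ the assumption $d\in T_{\cal F}(\bar x)$ is automatic, and that local optimality forces $\nabla f(\bar x)d\ge 0$ for every $d\in T_{\cal F}(\bar x)$, so the critical-direction hypothesis $\nabla f(\bar x)d=0$ is precisely what is needed to annihilate the linear term.)
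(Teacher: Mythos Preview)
Your proof is correct. The paper does not give its own proof of this theorem; it simply records it as a known result from Penot \cite[Corollary 1.3]{Pen94}. Your direct argument---take sequences $t_k\downarrow 0$, $v_k\to w$ with $\bar x+t_kd+\tfrac12 t_k^2v_k\in\mathcal F$, expand $f$ to second order, use $\nabla f(\bar x)d=0$ to kill the linear term, divide by $\tfrac12 t_k^2$ and pass to the limit---is the standard proof and matches the argument the authors themselves had written (it appears, essentially verbatim, in a commented-out block in the paper's source for a closely related formulation).
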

 We shall now apply this theorem under the assumption of directional metric subregularity.
 Define the critical cone at $\bar x$ as:
 \[
 C(\bar{x}):= \Big\{ d \, | \, \nabla g(\bar x)d \in T_{\Lambda}(g(\bar x)),
 \nabla f(\bar x) d \leq 0 \Big\}.
 \]
 \begin{lemma} \label{Lem3.1}Let  $\bar{x}$ be a locally optimal solution of (P). Suppose that the feasible set mapping $g(x)-\Lambda$ is metrically subregular at $(\bar x, 0)$ in direction $d\in C(\bar x)$. Then $\nabla f(\bar x) d =0$.
 \end{lemma}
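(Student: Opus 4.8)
The plan is to show that the directional metric subregularity at $\bar x$ in direction $d$ forces the first-order term $\nabla f(\bar x)d$ to vanish whenever $d$ lies in the critical cone. The only inequality available a priori is $\nabla f(\bar x)d\le 0$, which is built into the definition of $C(\bar x)$; so the whole content of the lemma is the reverse inequality $\nabla f(\bar x)d\ge 0$, and for that I must produce feasible points of the form $\bar x+t_kd+o(t_k)$ approaching $\bar x$ along the direction $d$.

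First I would invoke Proposition \ref{PropSecOrderTanSet}, in particular the equivalence \eqref{EqTanF}: since $M(x)=g(x)-\Lambda$ is metrically subregular at $(\bar x,0)$ in direction $d$ and $\nabla g(\bar x)d\in T_\Lambda(g(\bar x))$ holds by the definition of $C(\bar x)$, we get $d\in T_{\mathcal F}(\bar x)$. (Alternatively one can get this directly from the metric subregularity estimate \eqref{EqDirMS}: picking $t_k\downarrow 0$ with $\dist(g(\bar x)+t_k\nabla g(\bar x)d,\Lambda)=o(t_k)$, the points $\bar x+t_kd$ lie in the directional neighborhood $V_{\rho,\delta}(d)$ for $k$ large, so $\dist(\bar x+t_kd,\mathcal F)\le\kappa\,\dist(g(\bar x+t_kd),\Lambda)=\kappa\,\dist(g(\bar x)+t_k\nabla g(\bar x)d,\Lambda)+o(t_k)=o(t_k)$, whence there are feasible $x_k$ with $x_k=\bar x+t_kd+o(t_k)$.)

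Next I would use local optimality of $\bar x$. Writing $x_k\in\mathcal F$ with $x_k\to\bar x$ and $(x_k-\bar x)/t_k\to d$, for $k$ large we have $f(x_k)\ge f(\bar x)$, and the first-order Taylor expansion gives $f(x_k)=f(\bar x)+\nabla f(\bar x)(x_k-\bar x)+o(\|x_k-\bar x\|)=f(\bar x)+t_k\nabla f(\bar x)d+o(t_k)$. Dividing the inequality $f(x_k)-f(\bar x)\ge 0$ by $t_k$ and letting $k\to\infty$ yields $\nabla f(\bar x)d\ge 0$. Combined with $\nabla f(\bar x)d\le 0$ from membership in $C(\bar x)$, this gives $\nabla f(\bar x)d=0$.

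There is essentially no hard step here — the lemma is a direct corollary of the tangent-cone characterization under directional metric subregularity plus a one-term Taylor expansion. The only point requiring a little care is confirming that $\bar x+t_kd$ genuinely lands in the directional neighborhood $V_{\rho,\delta}(d)$ so that \eqref{EqDirMS} applies; this is immediate since $t_kd$ is a positive multiple of $d$ and hence lies on the ray through $d$, which is contained in $V_{\rho,\delta}(d)$ for all small $t_k$. I would present the proof via the clean route of citing \eqref{EqTanF} of Proposition \ref{PropSecOrderTanSet} to get $d\in T_{\mathcal F}(\bar x)$, then apply the standard argument that $\nabla f(\bar x)d\ge 0$ for all $d\in T_{\mathcal F}(\bar x)$ when $\bar x$ is locally optimal.
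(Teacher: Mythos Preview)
Your proposal is correct and takes essentially the same approach as the paper: invoke \eqref{EqTanF} from Proposition~\ref{PropSecOrderTanSet} to get $d\in T_{\mathcal F}(\bar x)$, use local optimality to conclude $\nabla f(\bar x)d\ge 0$, and combine with $\nabla f(\bar x)d\le 0$ from the definition of $C(\bar x)$. The paper's proof is even terser than yours, simply stating the standard fact $\nabla f(\bar x)d\ge 0$ for all $d\in T_{\mathcal F}(\bar x)$ without spelling out the Taylor expansion.
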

  \begin{proof} Since $\bar x$ is a locally optimal solution of (P), we have
 $$\nabla f(\bar x)d\geq 0 \qquad \forall d \in T_{\cal F}(\bar x).$$
By Proposition \ref{PropSecOrderTanSet} we have
 $d \in T_{\cal F}(\bar x) \Longleftrightarrow \nabla g(\bar x) d\in T_\Lambda(g(\bar x))$ and hence $\nabla f(\bar x)d\geq 0$. By definition of the critical cone, it follows that $\nabla f(\bar x)d= 0$.
 \end{proof}
 The following second-order necessary optimality condition in primal form  follows now immediately from Lemma \ref{Lem3.1}, Theorem \ref{ThBasicSecOrd} and Proposition \ref{PropSecOrderTanSet}. It improves \cite[Lemma 3.44]{BS} in that $\Lambda$ does not need to be convex and the result holds under the {directional} metric subregularity instead of Robinson's constraint qualification.
 \begin{corollary} \label{Cor3.1}
Let  $\bar{x}$ be a locally optimal solution of (P). Suppose that  the feasible set mapping $g(x)-\Lambda$ is metrically subregular at $(\bar x, 0)$ in direction  $d\in C(\bar{x})$.
 Then for all $w$ satisfying $\nabla g(\bar x)w +\nabla^2g(\bar x)(d,d)\in T^2_\Lambda(g(\bar x);\nabla g(\bar x)d)$ one has
 \[
 \nabla f(\bar x)w+\nabla^2f(\bar x)(d,d) \geq 0.
 \]
 \end{corollary}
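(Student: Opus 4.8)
The plan is to derive Corollary \ref{Cor3.1} directly by combining three ingredients already established in the excerpt: Lemma \ref{Lem3.1}, the basic second-order necessary condition Theorem \ref{ThBasicSecOrd}, and the characterization of the second-order tangent set of $\mathcal F$ in Proposition \ref{PropSecOrderTanSet}. First I would observe that under the hypothesis $d\in C(\bar x)$, we have $\nabla g(\bar x)d\in T_\Lambda(g(\bar x))$, so by the equivalence \eqref{EqTanF} in Proposition \ref{PropSecOrderTanSet} it follows that $d\in T_{\mathcal F}(\bar x)$. Then Lemma \ref{Lem3.1} applies and gives $\nabla f(\bar x)d=0$, so $d$ is a critical direction in the sense required by Theorem \ref{ThBasicSecOrd}.

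Next I would invoke Theorem \ref{ThBasicSecOrd} with this $d$: since $\bar x$ is locally optimal, $d\in T_{\mathcal F}(\bar x)$, and $\nabla f(\bar x)d=0$, we conclude that
\[
\nabla f(\bar x)w+\nabla^2 f(\bar x)(d,d)\geq 0\qquad\forall\,w\in T^2_{\mathcal F}(\bar x;d).
\]
Finally I would translate the quantifier "$\forall w\in T^2_{\mathcal F}(\bar x;d)$" into the constraint-system language of the corollary using the chain rule \eqref{EqSecOrdTanSetF} from Proposition \ref{PropSecOrderTanSet}, which (thanks to the assumed directional metric subregularity of $M=g(\cdot)-\Lambda$ at $(\bar x,0)$ in direction $d$) identifies
\[
T^2_{\mathcal F}(\bar x;d)=\{w\mid \nabla g(\bar x)w+\nabla^2 g(\bar x)(d,d)\in T^2_\Lambda(g(\bar x);\nabla g(\bar x)d)\}.
\]
Substituting this description into the inequality above yields exactly the assertion of the corollary.

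There is essentially no obstacle here: the corollary is a straightforward corollary in the literal sense, stitching together the preceding lemma, theorem, and proposition. The only point requiring a word of care is that Theorem \ref{ThBasicSecOrd} is stated for directions $d\in T_{\mathcal F}(\bar x)$ with $\nabla f(\bar x)d=0$, whereas the corollary's hypothesis is phrased via the critical cone $C(\bar x)$ and the linearized condition $\nabla g(\bar x)d\in T_\Lambda(g(\bar x))$; bridging this gap is precisely the content of Lemma \ref{Lem3.1} together with \eqref{EqTanF}, so one just needs to cite them in the right order. The proof is therefore a short three-line argument.

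\begin{proof}
By assumption $d\in C(\bar x)$, so $\nabla g(\bar x)d\in T_\Lambda(g(\bar x))$, and hence $d\in T_{\mathcal F}(\bar x)$ by \eqref{EqTanF} in Proposition \ref{PropSecOrderTanSet}. Lemma \ref{Lem3.1} then yields $\nabla f(\bar x)d=0$. Applying Theorem \ref{ThBasicSecOrd} we obtain $\nabla f(\bar x)w+\nabla^2f(\bar x)(d,d)\geq 0$ for all $w\in T^2_{\mathcal F}(\bar x;d)$. Finally, by \eqref{EqSecOrdTanSetF} in Proposition \ref{PropSecOrderTanSet}, the set $T^2_{\mathcal F}(\bar x;d)$ coincides with $\{w\mid \nabla g(\bar x)w+\nabla^2g(\bar x)(d,d)\in T^2_\Lambda(g(\bar x);\nabla g(\bar x)d)\}$, which gives the claimed conclusion.
\end{proof}
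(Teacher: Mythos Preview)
Your proof is correct and follows exactly the same approach as the paper, which states that the corollary ``follows now immediately from Lemma \ref{Lem3.1}, Theorem \ref{ThBasicSecOrd} and Proposition \ref{PropSecOrderTanSet}.'' Your write-up simply spells out these three steps in the natural order.
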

}

 \section{Dual form of second-order optimality conditions}

 In this section we will derive the dual form of second-order necessary optimality conditions for the general problem (P). {By Corollary \ref{Cor3.1} we know, that at a local solution $\bar x$ of (P) for every critical direction $d$ satisfying a directional metric subregularity constraint qualification, the optimal value of  the program
   \begin{eqnarray*}
 \min_w && \nabla f(\bar{x})w+\nabla^2f(\bar{x})(d,d) \\
 \mbox{s.t.} && \nabla g(\bar{x})w+\nabla^2g(\bar{x})(d,d)\in T^2_\Lambda(g(\bar x);\nabla g(\bar x)d)
 \end{eqnarray*}
 is nonnegative. The second-order necessary conditions for (P) presented below are necessary conditions and {characterizations}, respectively, of this fact.
 }


Recall that for a set $S$, its support function is defined as $\sigma_S(\lambda):=\sup_{u\in S}\lambda^Tu$.
Suppose that the supremum $\sigma_S(\lambda)$ is achieved at $\bar u\in S$. Since
$\bar u\in S$ is an optimal solution for $\sup_{u\in S}\lambda^Tu $ if and only if
$\lambda \in N_S( \bar u)$ as $S$ is convex,
the  support function in convex case can be represented as
\begin{eqnarray}
\sigma_S(\lambda)= \lambda^Tu  \ \mbox{ if } \lambda \in N_S(u) \mbox{ for some } u\in S. \label{supportfunction}
\end{eqnarray}
Inspired by the above expression for the support function when the set is convex and the supremum is achieved, we define the following  function which will play an important role for our analysis. It turns out that this function is in general smaller and coincides with the support function when the set is convex.
 \begin{definition}
   Given a nonempty closed set $S\subseteq \Re^n$ we define the {\em lower generalized support function to $S$} as the mapping $\hat\sigma_S:\Re^n\to\Re\cup\{\infty\}$ by
   \[
     \hat\sigma_S(\lambda):=\liminf_{\tilde\lambda \to\lambda}\inf_u\{\tilde  \lambda^Tu \mid \tilde\lambda\in N_S(u)\}.
   \]
   {Further, for every subset $A\subseteq \Re^n$ we define the {\em lower generalized support function to $S$ with respect to $A$} as the mapping $\hat\sigma_{S,A}:\Re^n\to\Re\cup\{\infty\}$ by
   \[
     \hat\sigma_{S,A}(\lambda):=\liminf_{\tilde\lambda \to\lambda}\inf_u\{\tilde  \lambda^Tu \mid u\in N_S^{-1}(\tilde\lambda)\cap A\}.
   \]}
 \end{definition}
By convention,  $\hat\sigma_S(\lambda):=+\infty$ if $S=\emptyset$.
By the definition, we have
$\hat \sigma_S\leq \hat\sigma_{S,A}$
 for every subset $A\subseteq\Re^n$
 and $\hat\sigma_{S,B}\leq \hat\sigma_{S,A}$ whenever $A\subseteq B\subseteq\Re^n$.

{We} can show that the limiting normal cone in the above definition {of $\hat\sigma_S$} can be replaced by the regular normal cone.
  \begin{lemma}\label{LemHatSigma}Let $S\subseteq\Re^n$ be closed. Then
   \[
     \hat\sigma_S(\lambda)=\liminf_{\tilde\lambda\to\lambda}\inf_u\{ \tilde\lambda^Tu \mid \tilde\lambda\in \widehat N_S(u)\}, \ \ \ \forall \lambda\in\Re^n.
   \]
 \end{lemma}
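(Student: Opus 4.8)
The plan is to show the two quantities agree by establishing inequality in both directions. Write
\[
  \hat\sigma_S(\lambda)=\liminf_{\tilde\lambda\to\lambda}\inf_u\{\tilde\lambda^Tu\mid\tilde\lambda\in N_S(u)\},\qquad
  \tau(\lambda):=\liminf_{\tilde\lambda\to\lambda}\inf_u\{\tilde\lambda^Tu\mid\tilde\lambda\in\widehat N_S(u)\}.
\]
Since $\widehat N_S(u)\subseteq N_S(u)$ for every $u\in S$, every pair $(u,\tilde\lambda)$ admissible for the infimum defining $\tau$ is also admissible for the one defining $\hat\sigma_S$; hence $\inf_u\{\tilde\lambda^Tu\mid\tilde\lambda\in N_S(u)\}\le\inf_u\{\tilde\lambda^Tu\mid\tilde\lambda\in\widehat N_S(u)\}$ pointwise in $\tilde\lambda$, and taking $\liminf_{\tilde\lambda\to\lambda}$ gives $\hat\sigma_S(\lambda)\le\tau(\lambda)$. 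This direction is immediate.

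The substantive direction is $\tau(\lambda)\le\hat\sigma_S(\lambda)$. First I would dispose of the trivial case $S=\emptyset$, where both sides are $+\infty$ by convention, and also the case $\hat\sigma_S(\lambda)=+\infty$. So assume $\hat\sigma_S(\lambda)<+\infty$ and fix any $r>\hat\sigma_S(\lambda)$; it suffices to produce, for arbitrary $\varepsilon>0$, a point $u'\in S$ and a vector $\lambda'$ with $\norm{\lambda'-\lambda}<\varepsilon$ and $\lambda'\in\widehat N_S(u')$ and $(\lambda')^Tu'<r+\varepsilon$, since then $\tau(\lambda)\le r+\varepsilon$ and letting $\varepsilon\downarrow0$ and then $r\downarrow\hat\sigma_S(\lambda)$ finishes. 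By definition of $\hat\sigma_S$ as a $\liminf$, there is a sequence $\tilde\lambda_n\to\lambda$ with $\inf_u\{\tilde\lambda_n^Tu\mid\tilde\lambda_n\in N_S(u)\}<r$ for all large $n$; hence for each such $n$ there is $u_n\in S$ with $\tilde\lambda_n\in N_S(u_n)$ and $\tilde\lambda_n^Tu_n<r$. Now fix $n$ large enough that $\norm{\tilde\lambda_n-\lambda}<\varepsilon/2$ and moreover $r+\tfrac12\varepsilon$ still dominates $\tilde\lambda_n^Tu_n$ with a margin to absorb a small perturbation (choose $n$ so large that $\varepsilon/2\cdot(\norm{u_n})$ is controlled — see the next step). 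Since $\tilde\lambda_n\in N_S(u_n)=\Limsup_{u\overset{S}\to u_n}\widehat N_S(u)$, there exist sequences $u_n^k\overset{S}\to u_n$ and $\mu_n^k\to\tilde\lambda_n$ (as $k\to\infty$) with $\mu_n^k\in\widehat N_S(u_n^k)$. For $k$ large, $u_n^k$ is close to $u_n$ and $\mu_n^k$ is close to $\tilde\lambda_n$, so $\norm{\mu_n^k-\lambda}<\varepsilon$ and, by continuity of the bilinear pairing, $(\mu_n^k)^T u_n^k\to\tilde\lambda_n^Tu_n<r$, so $(\mu_n^k)^Tu_n^k<r+\varepsilon$ for $k$ large. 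Taking $\lambda':=\mu_n^k$, $u':=u_n^k$ for such a $k$ gives the required point, completing the proof.

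The main obstacle is purely bookkeeping: one must be careful that the diagonal extraction over $n$ (approximating $\lambda$) and $k$ (approximating the limiting normal by regular normals) is carried out so that both the perturbation of $\lambda$ and the value of the pairing stay within the prescribed tolerances simultaneously — in particular the factor $\norm{u_n}$ appearing when one estimates $\abs{(\mu_n^k)^Tu_n^k-\tilde\lambda_n^Tu_n}$ must be kept finite, which is automatic for each fixed $n$ since $u_n$ is a fixed point and $u_n^k\to u_n$, $\mu_n^k\to\tilde\lambda_n$ make the product converge. No compactness or boundedness of $S$ is needed because we never pass to a limit over $n$ inside the pairing; we only need that for \emph{some} large $n$ and \emph{some} large $k$ the two tolerances are met, which the nested $\liminf$/$\Limsup$ structure guarantees.
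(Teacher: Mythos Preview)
Your proposal is correct and follows essentially the same approach as the paper's proof: one inequality is immediate from $\widehat N_S(u)\subseteq N_S(u)$, and the other uses the very definition of the limiting normal cone to approximate any pair $(u,\tilde\lambda)$ with $\tilde\lambda\in N_S(u)$ by a nearby pair $(u_\epsilon,\tilde\lambda_\epsilon)$ with $\tilde\lambda_\epsilon\in\widehat N_S(u_\epsilon)$ and $\tilde\lambda_\epsilon^Tu_\epsilon$ close to $\tilde\lambda^Tu$. The paper simply states this approximation fact in one line, whereas you spell out the diagonal extraction and the $\liminf$ bookkeeping explicitly, but the underlying argument is identical.
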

 \begin{proof}
   It follows easily  that by the definition of the limiting normal cone that  we have $\widehat N_S(u)\subseteq N_S(u)$ and for every $u\in S$,  every $\tilde\lambda\in N_S(u)$ and every $\epsilon>0$ we can find $u_\epsilon$ and $\tilde\lambda_\epsilon\in\widehat N_S(u_\epsilon)$ such that
   \[\Vert u-u_\epsilon\Vert\leq\epsilon,\ \Vert\tilde\lambda-\tilde\lambda_\epsilon\Vert\leq\epsilon,\ \vert \tilde\lambda^Tu-\tilde\lambda_\epsilon^Tu_\epsilon\vert\leq\epsilon.\]
 \end{proof}
In the following proposition we show that the lower generalized support function is always less than or equal to the support function and that both functions coincide when the underlying set is convex.
 \begin{proposition}\label{PropHatSigma}
 \begin{enumerate}
 \item For every nonempty closed set $S\subseteq \Re^n$ one has
   \[\hat\sigma_S(\lambda)\leq \sigma_S(\lambda), \ \forall\lambda.\]
 \item
   For every nonempty closed convex set $S\subseteq \Re^n$ one has
   \[\hat\sigma_S(\lambda)=\sigma_S(\lambda), \ \forall\lambda.\]


    \end{enumerate}
 \end{proposition}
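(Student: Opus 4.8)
The plan is to prove the two assertions separately, starting with the inequality $\hat\sigma_S(\lambda)\le\sigma_S(\lambda)$, which is the easier one. By Lemma \ref{LemHatSigma} we may work with regular normals. For the first part, fix $\lambda$ and a sequence $\tilde\lambda\to\lambda$; for each such $\tilde\lambda$ and each $u$ with $\tilde\lambda\in\widehat N_S(u)$ we want to bound $\tilde\lambda^Tu$ from above by something converging to $\sigma_S(\lambda)$. The direct route: if $u$ satisfies $\tilde\lambda\in\widehat N_S(u)$, it need not follow from a global maximization property (that is exactly the non-convex subtlety), so instead I would argue that if the infimum in the definition were strictly below $\sigma_S(\lambda)$ we would still need to exhibit the claimed bound. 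Actually the cleanest argument is: for any $u\in S$ and any $\tilde\lambda\in\widehat N_S(u)$, by definition of the regular normal cone, $\tilde\lambda^T(u'-u)\le o(\|u'-u\|)$ for $u'\in S$; this gives a \emph{local} statement only, so $\tilde\lambda^Tu$ is not immediately $\le\sigma_S(\lambda)$. Hence I expect part (1) to require the standing hypothesis more carefully — but note $\sigma_S(\lambda)=+\infty$ is allowed, and if $\sigma_S(\lambda)=+\infty$ there is nothing to prove; if $\sigma_S(\lambda)<\infty$, I would instead observe that the infimum defining $\hat\sigma_S$ is an infimum (so adding more constraints on $u$ only raises it), and compare with $\sigma_S$ via the fact that $\hat\sigma_S(\lambda)\le\inf_u\{\lambda^Tu\mid\lambda\in\widehat N_S(u)\}$ when this set is nonempty, and is $+\infty$ when it is empty — wait, that would give the wrong direction. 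The actual mechanism must be: $\hat\sigma_S(\lambda)$ is a $\liminf$ of infima, hence $\le$ any particular value $\tilde\lambda^Tu$ with $\tilde\lambda$ near $\lambda$ and $\tilde\lambda\in\widehat N_S(u)$; and for the \emph{convex} comparison in part (2) one uses that the support function's supremum, when finite and attained at $\bar u$, satisfies $\lambda\in N_S(\bar u)$. So for part (1) in the general case, I would take a maximizing sequence $u_k\in S$ for $\sigma_S(\lambda)$ and perturb: the key is that one cannot guarantee $\lambda\in\widehat N_S(u_k)$, so the honest proof of (1) likely uses that $\hat\sigma_S\le\hat\sigma_{S,A}$ is irrelevant here and instead appeals to a direct estimate showing the defining set of the $\liminf$-infimum, when nonempty, has elements with value $\le\sigma_S(\lambda)+\epsilon$.

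Let me restate the realistic plan. For part (1): if $\sigma_S(\lambda)=+\infty$ the inequality is trivial. Otherwise, I claim that for every $u$ with $\lambda\in\widehat N_S(u)$ (if any exist) one in fact has $\lambda^Tu\le\sigma_S(\lambda)$ trivially since $u\in S$; so $\inf_u\{\lambda^Tu\mid\lambda\in\widehat N_S(u)\}\le\sigma_S(\lambda)$ whenever the constraint set is nonempty. The difficulty is the $\liminf_{\tilde\lambda\to\lambda}$: for $\tilde\lambda\ne\lambda$, an element $u$ with $\tilde\lambda\in\widehat N_S(u)$ gives $\tilde\lambda^Tu = \lambda^Tu + (\tilde\lambda-\lambda)^Tu\le\sigma_S(\lambda)+\|\tilde\lambda-\lambda\|\,\|u\|$, and $\|u\|$ need not be bounded along the sequence. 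So the main obstacle is controlling $\|u\|$: I would handle this by noting that if no bounded minimizing $u$ exists the infimum tends to $-\infty$ and the inequality holds trivially, and if the relevant $u$'s stay bounded the perturbation term vanishes. This case analysis (bounded vs.\ unbounded branch of the minimizing configuration) is the technical heart of part (1).

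For part (2), I would use the already-established inequality $\hat\sigma_S\le\sigma_S$ and prove the reverse $\hat\sigma_S(\lambda)\ge\sigma_S(\lambda)$ for convex $S$. If $\sigma_S(\lambda)=+\infty$: I must show $\hat\sigma_S(\lambda)=+\infty$, i.e., the defining infimum-liminf diverges; here the recession/unboundedness of $S$ in direction $\lambda$ forces $N_S^{-1}(\tilde\lambda)=\emptyset$ for $\tilde\lambda$ near $\lambda$ (by convex analysis, $\tilde\lambda\in N_S(u)$ for some $u$ iff $\sigma_S(\tilde\lambda)<\infty$ and is attained), making the inner infimum $+\infty$ by the convention $\inf\emptyset=+\infty$; this needs a small argument that $\sigma_S(\tilde\lambda)=+\infty$ persists for $\tilde\lambda$ near $\lambda$, which follows from lower semicontinuity... actually $\sigma_S$ is lsc as a sup of linear functions, so $\{\sigma_S=+\infty\}$ need not be open — I would instead argue directly that for $\lambda$ with $\sigma_S(\lambda)=\infty$, even if some nearby $\tilde\lambda$ have finite attained support value at some $u_{\tilde\lambda}$, we must have $\tilde\lambda^Tu_{\tilde\lambda}=\sigma_S(\tilde\lambda)\ge\sigma_S(\lambda) - $ (error) which blows up. If $\sigma_S(\lambda)<\infty$ but not attained: again $N_S^{-1}(\lambda)=\emptyset$, but nearby $\tilde\lambda$ may have attained suprema; I would show $\liminf_{\tilde\lambda\to\lambda}\sigma_S(\tilde\lambda)\ge\sigma_S(\lambda)$ by lower semicontinuity of $\sigma_S$, and that each inner infimum equals $\sigma_S(\tilde\lambda)$ exactly (in the convex case, $\tilde\lambda\in N_S(u)\Rightarrow\tilde\lambda^Tu=\sigma_S(\tilde\lambda)$, so the inner infimum is the constant $\sigma_S(\tilde\lambda)$ over the nonempty constraint set, or $+\infty$ if empty) — hence $\hat\sigma_S(\lambda)\ge\liminf_{\tilde\lambda\to\lambda}\sigma_S(\tilde\lambda)\ge\sigma_S(\lambda)$. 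If $\sigma_S(\lambda)$ is attained at $\bar u$: then $\lambda\in N_S(\bar u)$, so $\lambda$ itself is admissible with value $\lambda^T\bar u=\sigma_S(\lambda)$, and combined with the lsc bound on nearby $\tilde\lambda$ we get $\hat\sigma_S(\lambda)\ge\sigma_S(\lambda)$. The unifying observation is: for convex $S$, the inner infimum $\inf_u\{\tilde\lambda^Tu\mid\tilde\lambda\in N_S(u)\}$ equals $\sigma_S(\tilde\lambda)$ when finite-and-attained and $+\infty$ otherwise, so $\hat\sigma_S(\lambda)=\liminf_{\tilde\lambda\to\lambda}\sigma_S^{\mathrm{att}}(\tilde\lambda)\ge\liminf_{\tilde\lambda\to\lambda}\sigma_S(\tilde\lambda)\ge\sigma_S(\lambda)$ by lower semicontinuity of the support function. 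I expect the bookkeeping around non-attainment and the $+\infty$ conventions to be the only delicate point, but it is routine convex analysis once the equivalence "$\tilde\lambda\in N_S(u)\iff u$ maximizes $\tilde\lambda^T\cdot$ over $S$" is invoked.
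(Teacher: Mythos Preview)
Your plan for part (2) is essentially the paper's argument: in the convex case the inner infimum at any $\tilde\lambda$ collapses to $\sigma_S(\tilde\lambda)$ (or $+\infty$) via the equivalence $\tilde\lambda\in N_S(u)\iff \tilde\lambda^Tu=\sigma_S(\tilde\lambda)$, and then lower semicontinuity of $\sigma_S$ finishes. The case splitting you sketch (attained vs.\ not attained, finite vs.\ infinite) is more elaborate than needed---the paper simply assumes $\hat\sigma_S(\lambda)<\infty$, picks a realizing sequence $\lambda_k\to\lambda$, and observes $\hat\sigma_S(\lambda)=\liminf_k\sigma_S(\lambda_k)\ge\sigma_S(\lambda)$---but your argument is correct.

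Part (1), however, has a genuine gap. Your proposed dichotomy ``bounded vs.\ unbounded minimizing $u$'' does not produce the needed sequence. The task is to \emph{construct} some $\tilde\lambda_k\to\lambda$ together with $u_k\in N_S^{-1}(\tilde\lambda_k)$ for which $\tilde\lambda_k^Tu_k$ stays below $\sigma_S(\lambda)+o(1)$. You never explain why such a sequence exists; in particular, for a given $\lambda$ with $\sigma_S(\lambda)<\infty$ it can happen that $N_S^{-1}(\lambda)=\emptyset$ (e.g.\ $S=\{(x,e^{-x}):x\ge0\}$, $\lambda=(0,-1)$), so one cannot simply take $\tilde\lambda=\lambda$. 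And for nearby $\tilde\lambda$ with $N_S^{-1}(\tilde\lambda)\ne\emptyset$, the points $u$ realizing the normal may drift to infinity, in which case the cross term $(\tilde\lambda-\lambda)^Tu$ is uncontrolled; your claim that then ``the infimum tends to $-\infty$'' is unjustified and in general false. The paper resolves this with Ekeland's variational principle: starting from an $\epsilon$-maximizer $u_\epsilon$ of $\lambda^T\cdot$ over $S$, Ekeland yields $\tilde u_\epsilon\in S$ and $\lambda_\epsilon'\in N_S(\tilde u_\epsilon)$ with $\|\lambda_\epsilon'-\lambda\|\le\mu$ and, crucially, the product $\|\lambda_\epsilon'-\lambda\|\,\|\tilde u_\epsilon\|\le 2\epsilon$ controlled. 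This simultaneous control of the perturbation size and the base point norm is the missing idea; once you have it, $\langle\lambda_\epsilon',\tilde u_\epsilon\rangle\le\sigma_S(\lambda)+2\epsilon$ follows and the liminf bound drops out.
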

 \begin{proof}
 \begin{enumerate}
   \item 
   If $\sigma_S(\lambda)=\infty$, then there is nothing to prove. Now assume that $\sigma_S(\lambda)<\infty$.
Then for any $\epsilon>0$,
   there exists an $\epsilon$-optimal solution, say $u_\epsilon$ satisfying
   $$-\lambda^T u_\epsilon +\delta_S(u_\epsilon)< -\sigma_S(\lambda)+\epsilon,  $$
   where $\delta_S(\cdot)$ denotes the indicator function of set $S$.
   By Ekeland's variational principle, for any $\mu>0$ satisfying $\mu(\|u_\epsilon\|+1)\leq \epsilon$,
   there exists
    $\tilde{u}_\epsilon\in u_\epsilon +\epsilon/\mu \mathbb{B}$ with $-\lambda^T\tilde{u}_\epsilon  +\delta_{S}(\tilde{u}_\epsilon)\leq
  -\lambda^Tu_\epsilon  +\delta_{S}(u_\epsilon)$ and ${\rm arg}\min_u\{ -\lambda^Tu+\delta_S(u)+\mu\|u-\tilde{u}_\epsilon\| \}=\{\tilde{u}_\epsilon\}$. According to the first-order optimality conditions, we have
    \[0\in -\lambda+\partial\big(\delta_S(u)+\mu\|u-\tilde{u}_\epsilon\|\big)|_{u=\tilde{u}_\epsilon}\subseteq -\lambda+\mu {\cal B}+N_S(\tilde{u}_\epsilon),\]
    where the second inclusion follows from the subdifferential sum rule \cite[Corollary 10.9]{RW98}.
    Hence there exists $\lambda_\epsilon'$ such that
    $\|\lambda_\epsilon'-\lambda\|\leq \mu\leq \epsilon$ and $\lambda_\epsilon'\in N_S(\tilde{u}_\epsilon)$.  Note that $\lambda_\epsilon'\to \lambda$ as $\epsilon\to 0$ and
    \begin{equation}\label{2epsi}
   {\|\lambda'_\epsilon-\lambda\| \|\tilde{u}_\epsilon\|\leq}\mu(\|u_\epsilon\|+\epsilon/\mu)=\mu\|u_\epsilon\|+\epsilon\leq 2\epsilon.
    \end{equation}
    Thus
    \begin{eqnarray*}   \hat\sigma_S(\lambda)&=&\liminf_{\tilde\lambda \to\lambda}\inf_u\{\tilde  \lambda^Tu \mid \tilde\lambda\in N_S(u)\}
    \leq \liminf_{\epsilon \to 0^+}\inf_u\{(\lambda_\epsilon')^Tu \mid \lambda'_\epsilon\in N_S(u)\}\\
    &\leq& \liminf_{\epsilon \to 0^+} \langle \lambda'_\epsilon,\tilde{u}_\epsilon\rangle =
    \liminf_{\epsilon \to 0^+} \langle \lambda, \tilde{u}_\epsilon\rangle +\langle \lambda'_\epsilon-\lambda, \tilde{u}_\epsilon\rangle \\
    &\leq & \liminf_{\epsilon \to 0^+}\sigma_S(\lambda)+
      \|\lambda'_\epsilon-\lambda\| \|\tilde{u}_\epsilon\|\\
     &\leq& \liminf_{\epsilon \to 0^+} \sigma_S(\lambda)+2\epsilon\\
     &=& \sigma_S(\lambda),
    \end{eqnarray*}
    where we have used the fact (\ref{2epsi}).

\item By virtue of 1, we only need to prove the inequality $\hat\sigma_S(\lambda)\geq \sigma_S(\lambda)$. Consider an arbitrary $\lambda$. If $\hat\sigma_S(\lambda)=\infty$ there is nothing to show. Hence we can assume  $\hat\sigma_S(\lambda)<\infty$. Then we can find a sequences $\lambda_k$ converging to $\lambda$ such that
 $$   \hat\sigma_S(\lambda)=\liminf_{\tilde\lambda \to\lambda}\inf_u\{\tilde  \lambda^Tu \mid \tilde\lambda\in N_S(u)\}=\lim_{k\rightarrow \infty} \inf_u\{\lambda_k^Tu\mid \lambda_k\in N_S(u)\}.$$
       By convexity of $S$ we have by (\ref{supportfunction}) that $\lambda_k^Tu=\sigma_S(\lambda_k)$ whenever $\lambda_k\in N_S(u)$. It follows by the above and    the lower semi-continuity of the support function that
     $$   \hat\sigma_S(\lambda)=\liminf_{\tilde\lambda \to\lambda}\inf_u\{\tilde  \lambda^Tu \mid \tilde\lambda\in N_S(u)\} =\liminf_{
     \lambda_k \to\lambda} \sigma_S(\lambda_k)\geq \sigma_S(\lambda).$$
 \end{enumerate}
\end{proof}
Let the Lagrange function of problem (P) be
$$L(x,\lambda):=f(x)+g(x)^T \lambda.$$
 Consider
the following directional Mordukhovich (M-) multiplier  set:
\begin{eqnarray*}
 \Lambda (\bar x;d) := \{\lambda \, | \, \nabla_x L(\bar x,\lambda)=0,
      \ \lambda\in {N}_{\Lambda}(g(\bar x);\nabla g(\bar x)d)\}.
 \end{eqnarray*}
The following directional first-order necessary optimality condition holds at a local minimizer under the directional metric subregularity.
 \begin{proposition}\label{PropDirFirstOrder}
 Let  $\bar{x}$ be a local optimal solution of problem (P). Suppose that the set-valued map $M(x):=g(x)-\Lambda$ is metrically subregular  at $(\bar{x},0)$ in direction $d$ with $d\in C(\bar{x})$. Then the  directional M-multiplier set $\Lambda (\bar x;d)$ is nonempty.
 \end{proposition}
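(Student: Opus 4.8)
The plan is to convert the directional metric subregularity hypothesis into an exact penalization that holds on a directional neighbourhood of $\bar x$, and then to run a stationarity argument not at $\bar x$ itself but at points moving along the direction $d$. First I would record, via Lemma \ref{Lem3.1}, that $\nabla f(\bar x)d=0$. Since $f$ is $C^1$ it is Lipschitz near $\bar x$ with some modulus $\ell$, $g$ is Lipschitz near $\bar x$ with some modulus $\ell_g$, and directional metric subregularity supplies $\kappa,\rho,\delta>0$ with $\dist(x,{\cal F})\le\kappa\,\dist(g(x),\Lambda)$ for all $x\in\bar x+V_{\rho,\delta}(d)$. Projecting such an $x$ onto the closed set ${\cal F}$ and noting that the projection stays in the neighbourhood where $\bar x$ is optimal, the local optimality of $\bar x$ yields, after possibly shrinking $\rho$, the exact penalty inequality
\[\vartheta(x):=f(x)+L\,\dist(g(x),\Lambda)\ \ge\ f(\bar x)\qquad\forall\,x\in\bar x+V_{\rho,\delta}(d),\]
with $L:=\ell\kappa$.

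Next, since $\nabla g(\bar x)d\in T_\Lambda(g(\bar x))$, I would use \eqref{dist1} to pick $t_k\downarrow0$ with $\dist(g(\bar x)+t_k\nabla g(\bar x)d,\Lambda)=o(t_k)$; Taylor expansion of $g$ then gives $\dist(g(\bar x+t_kd),\Lambda)=o(t_k)$ and, using $\nabla f(\bar x)d=0$, $\vartheta(\bar x+t_kd)=f(\bar x)+o(t_k)$. Because $\bar x+V_{\rho,\delta}(d)$ contains the closed ball $\bar x+t_kd+\tfrac{\delta}{4} t_k\norm{d}\,{\cal B}$ for large $k$, the point $\bar x+t_kd$ is an $\eta_kt_k$-approximate minimizer of $\vartheta$ on that ball, with $\eta_k\downarrow0$. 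Ekeland's variational principle, applied with radius $\sqrt{\eta_k}\,t_k$ (which is $o(t_k)$, hence eventually smaller than the ball radius, so the Ekeland point lands in the interior), then produces $x_k$ with $\norm{x_k-(\bar x+t_kd)}\le\sqrt{\eta_k}\,t_k$ that minimizes $x\mapsto\vartheta(x)+\sqrt{\eta_k}\norm{x-x_k}$ locally.

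I would then apply Fermat's rule together with the limiting sum rule, the chain rule for $\dist(\cdot,\Lambda)\circ g$, and the standard estimate of the subdifferential of the distance function to obtain a multiplier $\mu_k\in N_\Lambda(w_k)$ with $\norm{\mu_k}\le L$, where $w_k\in\Lambda$ is a nearest point of $g(x_k)$, and a vector $r_k$ with $\norm{r_k}\le\sqrt{\eta_k}$ such that $\nabla f(x_k)+\nabla g(x_k)^T\mu_k+r_k=0$. Boundedness of $\{\mu_k\}$ lets me pass to a subsequence $\mu_k\to\lambda$ and obtain $\nabla_xL(\bar x,\lambda)=\nabla f(\bar x)+\nabla g(\bar x)^T\lambda=0$ in the limit. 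Since $\norm{w_k-g(x_k)}=\dist(g(x_k),\Lambda)\le\dist(g(\bar x+t_kd),\Lambda)+\ell_g\norm{x_k-(\bar x+t_kd)}=o(t_k)$ and $g(x_k)-g(\bar x)=t_k\nabla g(\bar x)d+o(t_k)$, one can write $w_k=g(\bar x)+t_ke_k$ with $e_k\to\nabla g(\bar x)d$; hence $\mu_k\in N_\Lambda(g(\bar x)+t_ke_k)$ and Proposition \ref{LemRobustDirNormalCone} gives $\lambda\in N_\Lambda(g(\bar x);\nabla g(\bar x)d)$. Thus $\lambda\in\Lambda(\bar x;d)$, so $\Lambda(\bar x;d)\neq\emptyset$.

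The step I expect to be most delicate is the passage from the purely directional hypothesis to a usable stationarity statement: one must localize around the \emph{moving} point $\bar x+t_kd$ and control that the Ekeland point, the nearest feasible point and the nearest point of $\Lambda$ all stay within $o(t_k)$ of it. This is essential because, in contrast to the non-directional case, the regular normal cone $\widehat N_{\cal F}(\bar x)$ need not be contained in the directional normal cone $N_{\cal F}(\bar x;d)$, so the direction $d$ must enter the argument precisely through this localization rather than through a stationarity condition read off at $\bar x$.
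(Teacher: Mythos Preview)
Your argument is correct. The exact-penalty step, the localization of Ekeland's principle in the shrinking balls $\bar x+t_kd+\tfrac{\delta}{4}t_k\norm{d}\,{\cal B}\subseteq\bar x+V_{\rho,\delta}(d)$, the extraction of a bounded multiplier $\mu_k\in N_\Lambda(w_k)$ via the joint minimization over $(x,z)$ with $z\in\Lambda$ (which makes the subdifferential calculus clean, since the chain rule for $\dist_\Lambda\circ g$ needs no qualification as $\dist_\Lambda$ is Lipschitz), and the final passage to the limit using Proposition~\ref{LemRobustDirNormalCone} all go through.

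The paper proceeds quite differently: it simply invokes \cite[Theorem~7]{Gfr13a}, which under directional metric subregularity yields $0\in\nabla f(\bar x)+D^*M((\bar x,0);(d,0))(\lambda)$ for some $\lambda$, and then computes the directional coderivative via the chain rule \cite[Corollary~3.2]{BeGfrOut19} for $N_{{\rm gph}\,M}((\bar x,0);(d,0))$, obtaining $\lambda\in N_\Lambda(g(\bar x);\nabla g(\bar x)d)$ and $\nabla_xL(\bar x,\lambda)=0$ in two lines. Your route is essentially a self-contained reproof of the special case of those two cited results that is needed here: the Ekeland--penalty scheme you run along the moving base point $\bar x+t_kd$ is precisely the mechanism underlying the abstract directional stationarity theorem. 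The advantage of your approach is that it is elementary and does not require the reader to know the directional coderivative framework; the advantage of the paper's approach is brevity and the fact that it makes transparent where the directionality enters (through the directional normal cone to the graph), whereas in your proof this is hidden in the bookkeeping that keeps $x_k$, $w_k$ within $o(t_k)$ of the ray.
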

 \begin{proof}
 By \cite[Theorem 7]{Gfr13a} there is some $\lambda$ satisfying
 \[0\in\nabla f(\bar x)+D^*M((\bar x,0);(d,0))(\lambda)\]
 where the directional limiting coderivative $D^*M((\bar x,0);(d,0))$ is defined by
 \[ u^*\in D^*M((\bar x,0);(d,0))(\lambda) \Longleftrightarrow (u^*,-\lambda )\in N_{{\rm gph\,} M}((\bar x,0);(d,0)).\]
 Since ${\rm gph\,} M=\{(x,y)| \, g(x)-y\in \Lambda\}$, we obtain from \cite[Corollary 3.2]{BeGfrOut19}
 \[N_{{\rm gph\,} M}((\bar x,0);(d,0))\subseteq\{(\nabla g(\bar x)^T\mu, -\mu)\,\mid\,\mu\in N_\Lambda\big(g(\bar x);\nabla g(\bar x)d\big)\}\]
 yielding the assertion of the proposition.
 \end{proof}

In the following theorem we give a second-order necessary optimality condition for problem (P) in terms of directional M-multipliers under the directional metric subregularity condition.

   \begin{theorem}\label{second-order-theorem-1}
Let  $\bar{x}$ be a local optimal solution of problem (P). Suppose that the set-valued map $M(x):=g(x)-\Lambda$ is metrically subregular at $(\bar{x},0)$ in direction $d$ with $d\in C(\bar{x})$ and $T^2_{\Lambda}\big(g(\bar{x});\nabla g(\bar{x})d\big)\neq \emptyset$.
  Then there exists a directional M-multiplier $\lambda\in \Lambda (\bar x;d)$ such that for every set $A$ satisfying $ A\supseteq \nabla g(\bar x) T_{\cal F}^2(\bar x;d)+\nabla^2g(\bar x)(d,d)$
  one has
\begin{equation}\label{EqSecOrderOpt}
  \nabla^2_{xx}L(\bar{x},\lambda)(d,d)-\hat\sigma_{T_\Lambda^2(g(\bar x);\nabla g(\bar x)d ),A}(\lambda)\geq 0.
\end{equation}
In particular, there exists a multiplier $\lambda\in \Lambda (\bar x;d)$ such that  \begin{equation}\label{EqSecOrderOptnew}
  \nabla^2_{xx}L(\bar{x},\lambda)(d,d)-\hat\sigma_{T^2_{\Lambda}(g(\bar x);\nabla g(\bar x)d ) }(\lambda)\geq 0.
\end{equation}
 \end{theorem}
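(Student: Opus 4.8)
### Plan for the proof of Theorem \ref{second-order-theorem-1}

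The plan is to combine the primal second-order necessary condition of Corollary \ref{Cor3.1} with a duality/linearization argument in the spirit of \cite{BCS99}, using the uniform metric subregularity of the second-order linearized mapping established in Proposition \ref{PropSecOrderTanSet}. By Corollary \ref{Cor3.1}, for the critical direction $d$ the linearized subproblem
\begin{eqnarray*}
\min_w && \nabla f(\bar x)w+\nabla^2 f(\bar x)(d,d)\\
\mbox{s.t.} && \nabla g(\bar x)w+\nabla^2 g(\bar x)(d,d)\in T^2_\Lambda(g(\bar x);\nabla g(\bar x)d)
\end{eqnarray*}
has nonnegative optimal value, and by \eqref{EqSecOrdTanSetF} its feasible set is exactly $T^2_{\cal F}(\bar x;d)$, which is nonempty by Corollary \ref{CorNonEmpty} since $T^2_\Lambda(g(\bar x);\nabla g(\bar x)d)\neq\emptyset$. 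First I would fix a minimizing sequence $w_k\in T^2_{\cal F}(\bar x;d)$, i.e. $\nabla f(\bar x)w_k+\nabla^2 f(\bar x)(d,d)\to v^*\geq 0$ where $v^*$ is the optimal value (possibly $w_k$ itself can be taken to realize points of $T^2_\Lambda$ via the chain rule); set $u_k:=\nabla g(\bar x)w_k+\nabla^2 g(\bar x)(d,d)\in T^2_\Lambda(g(\bar x);\nabla g(\bar x)d)\subseteq A$.

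Next I would produce the multiplier. The idea is to apply, for each $k$, a first-order optimality / separation argument to the point $u_k$ (or a nearby point obtained by Ekeland's principle, exactly as in the proof of Proposition \ref{PropHatSigma}(1)) to get $\lambda_k$ approximately normal to $T^2_\Lambda(g(\bar x);\nabla g(\bar x)d)$ at a point $\tilde u_k$ near $u_k$, together with the stationarity relation $\nabla f(\bar x)+\nabla g(\bar x)^T\lambda_k \approx 0$ coming from the uniform metric subregularity bound \eqref{EqMetrSubrSecOrdLinMap} (this plays the role of a constraint qualification for the linearized problem, so the abnormal multiplier is excluded and $\lambda_k$ can be normalized). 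Boundedness of $\{\lambda_k\}$ then follows because $\nabla g(\bar x)^T\lambda_k$ is bounded and any unbounded direction would, via Proposition \ref{LemRobustDirNormalCone} and Lemma \ref{relationship-normal-cone}, produce a nonzero element of $\Lambda(\bar x;d)$ with $\nabla g(\bar x)^T\lambda=0$, contradicting directional metric subregularity (Proposition \ref{FOSCMS}-type reasoning); so after passing to a subsequence $\lambda_k\to\lambda$. Since $\lambda_k\in \widehat N_{T^2_\Lambda(\cdots)}(\tilde u_k)$ with $\tilde u_k$ near $u_k\in A$, and $\lambda_k\to\lambda$, the definition of $\hat\sigma_{T^2_\Lambda(g(\bar x);\nabla g(\bar x)d),A}$ (using Lemma \ref{LemHatSigma} to pass between limiting and regular normals) gives
\[\hat\sigma_{T^2_\Lambda(g(\bar x);\nabla g(\bar x)d),A}(\lambda)\leq \liminf_k \lambda_k^T\tilde u_k = \liminf_k \lambda_k^T u_k,\]
the last equality since $\tilde u_k-u_k\to 0$ and $\lambda_k$ is bounded. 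To pass to the limit in the normal cone relation and conclude $\lambda\in N_\Lambda(g(\bar x);\nabla g(\bar x)d)$, I would invoke Lemma \ref{relationship-normal-cone}, inclusion \eqref{eq:2new}: $N_{T^2_\Lambda(g(\bar x);\nabla g(\bar x)d)}(\tilde u_k)\subseteq N_\Lambda(g(\bar x);\nabla g(\bar x)d)$ need not hold pointwise in $k$, but combined with the limiting operation and Proposition \ref{LemRobustDirNormalCone} one gets $\lambda\in N_\Lambda(g(\bar x);\nabla g(\bar x)d)$; together with $\nabla_x L(\bar x,\lambda)=0$ this yields $\lambda\in\Lambda(\bar x;d)$.

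Finally I would assemble the estimate. From $\lambda_k^T u_k=\lambda_k^T\nabla g(\bar x)w_k+\lambda_k^T\nabla^2 g(\bar x)(d,d) = (\nabla g(\bar x)^T\lambda_k)^T w_k+\lambda_k^T\nabla^2 g(\bar x)(d,d)$ and $\nabla g(\bar x)^T\lambda_k\approx -\nabla f(\bar x)$, one gets $\lambda_k^T u_k \approx -\nabla f(\bar x)w_k + \lambda_k^T\nabla^2 g(\bar x)(d,d)$; adding and subtracting $\nabla^2 f(\bar x)(d,d)$ and using $\nabla f(\bar x)w_k+\nabla^2 f(\bar x)(d,d)\to v^*\geq 0$ gives in the limit $\hat\sigma_{T^2_\Lambda(\cdots),A}(\lambda)\leq \lambda^T\nabla^2 g(\bar x)(d,d)+\nabla^2 f(\bar x)(d,d)-v^* \leq \nabla^2_{xx}L(\bar x,\lambda)(d,d)$, which is \eqref{EqSecOrderOpt}; here one must be careful that the error terms of order $\|\nabla f(\bar x)+\nabla g(\bar x)^T\lambda_k\|\cdot\|w_k\|$ vanish, which is where the modulus bound from \eqref{EqMetrSubrSecOrdLinMap} is needed to control $\|w_k\|$ against the residual. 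The statement \eqref{EqSecOrderOptnew} is the special case $A=\Re^m$, using $\hat\sigma_S\leq\hat\sigma_{S,A}$. The main obstacle I anticipate is the simultaneous control of the multiplier's boundedness and of the cross term $(\nabla f(\bar x)+\nabla g(\bar x)^T\lambda_k)^Tw_k$: one needs the Ekeland perturbation to be quantitatively tied to the metric-subregularity modulus so that the perturbed stationarity residual times $\|w_k\|$ tends to zero, exactly the delicate bookkeeping that the uniform bound \eqref{EqMetrSubrSecOrdLinMap} is designed to supply.
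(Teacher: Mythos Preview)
Your overall architecture is close to the paper's, but there is a genuine gap in the boundedness argument for the multipliers $\lambda_k$. You write that an unbounded sequence would produce a nonzero $\lambda$ with $\nabla g(\bar x)^T\lambda=0$ and $\lambda\in N_\Lambda(g(\bar x);\nabla g(\bar x)d)$, ``contradicting directional metric subregularity (Proposition \ref{FOSCMS}-type reasoning).'' This does not work: FOSCMS is a \emph{sufficient} condition for directional metric subregularity, not a necessary one, and the theorem only assumes metric subregularity. It is perfectly possible that such a singular multiplier exists while $M$ is still metrically subregular in direction $d$ (e.g.\ whenever $\nabla g(\bar x)$ fails to be surjective but the feasible region is nonetheless well-behaved). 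So you cannot exclude blow-up of $\lambda_k$ this way, and without boundedness the whole limiting argument collapses. The related issue you flag yourself---controlling the cross term $(\nabla f(\bar x)+\nabla g(\bar x)^T\lambda_k)^Tw_k$ when $\norm{w_k}$ may be unbounded---is not resolved by \eqref{EqMetrSubrSecOrdLinMap} alone, since that inequality bounds $\dist(w,T^2_{\cal F}(\bar x;d))$, not $\norm{w}$.

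The paper avoids both problems simultaneously by a Tikhonov regularization: for each $\epsilon>0$ it minimizes $\nabla f(\bar x)w+\frac\epsilon2\norm{w}^2$ over $T^2_{\cal F}(\bar x;d)$, obtaining an actual minimizer $w_\epsilon$ with $\epsilon\norm{w_\epsilon}\to0$. The first-order condition $-\nabla f(\bar x)-\epsilon w_\epsilon\in N_{T^2_{\cal F}(\bar x;d)}(w_\epsilon)$ is then unwound via the uniform MSCQ of Proposition \ref{PropSecOrderTanSet} and the normal-cone calculus of \cite[Theorem 3]{GY17}, which delivers $\lambda_\epsilon$ with the \emph{exact} stationarity $\nabla f(\bar x)+\epsilon w_\epsilon+\nabla g(\bar x)^T\lambda_\epsilon=0$ together with the a priori bound $\norm{\lambda_\epsilon}\leq\kappa\norm{\nabla f(\bar x)+\epsilon w_\epsilon}$ coming from the subregularity modulus $\kappa$. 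This is the step that replaces your FOSCMS-based boundedness argument: the modulus of metric subregularity directly bounds the multiplier, without any injectivity-type hypothesis. The cross term then becomes $\epsilon\norm{w_\epsilon}^2\geq0$, which only helps, and $\lambda_\epsilon\in N_{T^2_\Lambda(g(\bar x);\nabla g(\bar x)d)}(u_\epsilon)\subseteq N_\Lambda(g(\bar x);\nabla g(\bar x)d)$ by \eqref{eq:2new} (this inclusion does hold pointwise, contrary to your hesitation). Your Ekeland route could in principle be repaired by invoking the same MSCQ-based calculus to get the modulus bound on $\lambda_k$, but as written it relies on a converse to Proposition \ref{FOSCMS} that is false.
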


  \begin{proof} Note that by Corollary \ref{CorNonEmpty}, under the assumptions of the theorem we have $T_{\cal F}^2(\bar x;d)\not =\emptyset$.
  Consider for every $\epsilon>0$ the optimization problem
  \begin{equation} \min_w \nabla f(\bar x)w+\frac \epsilon2\norm{w}^2\mbox{ subject to }w\in T^2_{\mathcal F}(\bar x;d).\label{op1}
  \end{equation}
  Since $\epsilon>0$,  problem (\ref{op1})  has a globally optimal solution $w_\epsilon$ because the objective is coercive and the set $T^2_{\mathcal F}(\bar x;d)$ is closed and nonempty. We claim that $\lim_{\epsilon\downarrow 0}\epsilon\norm{w_\epsilon}=0$. Indeed,  by Theorem \ref{ThBasicSecOrd} together with the optimality of $w_\epsilon$ we obtain
  \[-\nabla^2 f(\bar x)(d,d)+\frac \epsilon2\norm{w_\epsilon}^2\leq \nabla f(\bar x)w_\epsilon+\frac \epsilon2\norm{w_\epsilon}^2\leq \nabla f(\bar x)\bar w+\frac \epsilon2\norm{\bar w}^2\]
  for arbitrarily chosen  $\bar w\in T^2_{\mathcal F}(\bar x;d)$, yielding
  \[
 \epsilon\|w_\epsilon\|\leq \sqrt{2 \epsilon}\Big(\nabla f(\bar{x})\bar w+\nabla^2f(\bar{x})(d,d)+\frac{\epsilon}{2}\|\bar w\|^2\Big)^{\frac{1}{2}}.
 \]
 Taking the limit on the both sides of the above inequality yields $\epsilon\|w_\epsilon\|\to 0$ as $\epsilon\downarrow 0$.
 By \cite[Theorem 6.12]{RW98}, the basic first-order optimality condition for problem (\ref{op1}) at $w_\epsilon$
 \begin{equation}
 -\nabla f(\bar x)-\epsilon w_\epsilon\in N_{T^2_{\mathcal F}(\bar x;d)}(w_\epsilon)\label{basicOC}\end{equation}
 is fulfilled. By Proposition \ref{PropSecOrderTanSet},
 $T^2_{\mathcal{F}}(\bar x;d)=\{w\,|\,  P(w)\in D\},$
 where $P(w):=\nabla g(\bar x)w+\nabla^2g(\bar x)(d,d)$, $D:= T^2_\Lambda\big(g(\bar x);\nabla g(\bar x)d\big)$, and MSCQ holds at  $w_\epsilon\in  T^2_{\mathcal{F}}(\bar x;d) $ for the system $P(w)\in D$ with modulus $\kappa$ which is  the modulus of metric subregularity of $M$ at $(\bar x,0)$ in direction $d$. It follows by
 \cite[Theorem 3]{GY17} that
 $$N_{T^2_{\mathcal F}(\bar x;d)}(w_\epsilon) \subseteq \{z| \exists \lambda_\epsilon \in \kappa \|z\| {\cal B} \cap N_D(P(w_\epsilon)) \mbox{ with } z=\nabla P(w_\epsilon)^T\lambda_\epsilon \}.$$
By virtue of  (\ref{basicOC}) and the above inclusion,  there is some multiplier
\begin{equation}
\lambda_\epsilon\in \kappa\norm{\nabla f(\bar x)+\epsilon w_\epsilon}{\mathcal B}\cap N_{T^2_\Lambda(g(\bar x);\nabla g(\bar x)d)}\big(\nabla g(\bar x) w_\epsilon+\nabla ^2g(\bar x)(d,d)\big)\label{multiplier}
\end{equation}  such that
 \begin{equation}\label{Eqfirstordercondition}\nabla f(\bar x)+\epsilon w_\epsilon+\nabla g(\bar x)^T\lambda_\epsilon=0.\end{equation}
  Since $\epsilon w_\epsilon\to 0$ as shown above,  $\lambda_\epsilon$ is bounded as $\epsilon$ sufficiently small.
   Hence we can take a sequence of positive numbers $\epsilon_k$ converging to $0$ such that the corresponding sequence of multipliers $\lambda_{\epsilon_k}$ converges to some $\lambda$.
    Taking limits as $\epsilon_k \to 0$ in \eqref{Eqfirstordercondition} we obtain
    $$\nabla f(\bar x)+\nabla g(\bar x)^T\lambda=0.$$
    By (\ref{multiplier}) and Lemma \ref{relationship-normal-cone}, we have
    $$\lambda_\epsilon\in  N_{T^2_\Lambda(g(\bar x);\nabla g(\bar x)d)}\big(\nabla g(\bar x) w_\epsilon+\nabla ^2g(\bar x)(d,d)\big)\subseteq N_\Lambda (g(\bar x); \nabla g(\bar x)d).$$ Taking limits as $\epsilon_k \to 0$, we obtain $\lambda \in N_\Lambda (g(\bar x); \nabla g(\bar x)d)$ and consequently
    $\lambda\in \Lambda(\bar x;d)$.

    Now consider any set $A\supseteq\nabla g(\bar x)T_{\cal F}^2(\bar x;d)+\nabla ^2g(\bar x)(d,d)$. Setting $u_k:=\nabla g(\bar x) w_{\epsilon_k}+\nabla ^2g(\bar x)(d,d)$ we have $\lambda_{\epsilon_k}\in N_{T^2_\Lambda(g(\bar x);\nabla g(\bar x)d)}(u_k) $ and $u_k\in A$. Taking into account (\ref{Eqfirstordercondition}) and
     $\nabla f(\bar x)w_{\epsilon_k}+\nabla^2f(\bar x)(d,d)\geq0$ by virtue of  Theorem \ref{ThBasicSecOrd} we obtain
    \begin{eqnarray*}\nabla_{xx}^2L(\bar x,\lambda)(d,d)&=&\lim_{k\to\infty}\left \{ \nabla_{xx}^2L(\bar x,\lambda_{\epsilon_k})(d,d)+\big(\nabla f(\bar x)+\epsilon_k w_{\epsilon_k}+\nabla g(\bar x)^T\lambda_{\epsilon_k} )^Tw_{\epsilon_k} \right \}\\
    &=&\lim_{k\to\infty}\Big(\nabla f(\bar x)w_{\epsilon_k}+\nabla^2f(\bar x)(d,d)+\lambda_{\epsilon_k}^Tu_k+\epsilon_k\norm{w_{\epsilon_k}}^2\Big)\\
    &\geq& \limsup_{k\to\infty}\lambda_{\epsilon_k}^Tu_k\geq \limsup_{k\to\infty}\inf \{\lambda_{\epsilon_k}^Tu\,\mid\, u\in N_{T^2_{\Lambda}(g(\bar x);\nabla g(\bar x)d )}^{-1}(\lambda_{\epsilon_k})\cap A\}\\
    &\geq& \liminf_{\tilde\lambda\to\lambda}\inf \{\tilde\lambda^Tu\,\mid\, u\in N_{T^2_{\Lambda}(g(\bar x);\nabla g(\bar x)d )}^{-1}(\tilde\lambda)\cap A\}\\
    &=&\hat\sigma_{T^2_{\Lambda}(g(\bar x);\nabla g(\bar x)d ),A}(\lambda).
    \end{eqnarray*}
   In particular if we take $A=\Re^m$, then  $\hat\sigma_{T^2_{\Lambda}(g(\bar x);\nabla g(\bar x)d ),A}=\hat \sigma_{T^2_{\Lambda}(g(\bar x);\nabla g(\bar x)d )}$ and hence (\ref{EqSecOrderOptnew}) holds.
   \end{proof}
 Among all possible choices for $A$, the second-order necessary condition \eqref{EqSecOrderOpt} is strongest for $A=A_{\rm opt}:=\nabla g(\bar x) T_{\cal F}^2(\bar x;d)+\nabla^2g(\bar x)(d,d) $
  and weakest for  $A=\Re^m$.  There is also an intermediate choice of $A_{\rm mid}:=\nabla g(\bar x)\Re^n+\nabla^2g(\bar x)(d,d)$. 
  {The optimal choice} $A_{\rm opt}$ involves the second-order tangent cone $T_{\cal F}^2(\bar x;d)$ which  is in general hard to compute and, moreover, if the second-order cone $T_{\cal F}^2(\bar x;d)$ is known we may use the primal optimality condition Theorem \ref{ThBasicSecOrd} instead.
   On the other hand, choosing $A=\R^m$ results in the weakest optimality condition but more trackable
    lower generalized support function $\hat \sigma_{T^2_{\Lambda}(g(\bar x);\nabla g(\bar x)d )}$. The intermediate choice $A_{\rm mid}:=\nabla g(\bar x)\Re^n+\nabla^2g(\bar x)(d,d)$ may result in stronger optimality conditions and  slightly harder to calculate $\hat \sigma_{T^2_{\Lambda}(g(\bar x);\nabla g(\bar x)d ),A_{\rm mid}}$  than $\hat \sigma_{T^2_{\Lambda}(g(\bar x);\nabla g(\bar x)d )}$.

Note that in Theorem  \ref{second-order-theorem-1}, even the first order optimality condition is stronger than the classical M-stationary condition since the directional limiting normal cone  is in general smaller than the nondirectional limiting normal cone.
However in the case where $\Lambda$ is convex, the directional M-stationary condition in a critical direction coincides with the classical stationary condition
{and the directional M-multiplier set $\Lambda(\bar x;d)$ coincides with the classical multiplier set
\[\Lambda(\bar x):=\{\lambda\in N_\Lambda(g(\bar x))\, |\, \nabla_x L(\bar x,\lambda)=0\}\]
for every critical direction $d\in C(\bar x)$. Indeed, the inclusion $\Lambda(\bar x;d)\subseteq\Lambda(\bar x)$ obviously holds. Now pick any $\lambda\in \Lambda(\bar x)$. Since $\nabla g(\bar x)d\in T_\Lambda(g(\bar x))$ and $\lambda\in N_\Lambda(g(\bar x))$, we have
\[0\geq \lambda^T\nabla g(\bar x)d=-\nabla f(\bar x)d\geq 0\]
implying $\lambda \in  \{\nabla g(\bar x)d\}^\perp$. Owing to \eqref{EqDirLimNormalConeConvex} we conclude $\lambda\in N_\Lambda(g(\bar x);\nabla g(\bar x)d)$ and $\lambda\in\Lambda(\bar x;d)$ follows.}
We now specialize Theorem   \ref{second-order-theorem-1} under the additional assumption that $\Lambda$ is convex and $T^2_{\Lambda}(g(\bar x);\nabla g(\bar x)d )$ is convex. In this case, we obtain the same second order necessary optimality as  the classical result of \cite[Theorem 3.45]{BS} under the directional metric subregularity which is weaker than the Robinson's constraint qualification.
\begin{corollary} Let  $\bar{x}$ be a local optimal solution of problem (P) where $\Lambda$ is convex. Suppose that the set-valued map $M(x):=g(x)-\Lambda$ is metrically subregular   at $(\bar{x},0)$ in direction $d$ with $d\in C(\bar{x})$.
If  the second-order tangent cone $T^2_{\Lambda}(g(\bar x);\nabla g(\bar x)d )$ is convex, then there exists a multiplier $\lambda\in \Lambda (\bar x)$ such that  \begin{equation*}
  \nabla^2_{xx}L(\bar{x},\lambda)(d,d)-\sigma_{T^2_{\Lambda}(g(\bar x);\nabla g(\bar x)d ) }(\lambda)\geq 0.
\end{equation*}
\end{corollary}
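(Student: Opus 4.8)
The plan is to read the corollary off directly from Theorem \ref{second-order-theorem-1}, together with Proposition \ref{PropHatSigma}(2) and the identification of the directional and classical multiplier sets that was carried out in the discussion immediately preceding the statement. First I would dispose of the degenerate case $T^2_\Lambda\big(g(\bar x);\nabla g(\bar x)d\big)=\emptyset$ (which is admissible, since the empty set is vacuously convex): because $d\in C(\bar x)$ and $M$ is metrically subregular at $(\bar x,0)$ in direction $d$, Proposition \ref{PropDirFirstOrder} guarantees $\Lambda(\bar x;d)\neq\emptyset$, and as recalled above $\Lambda(\bar x;d)=\Lambda(\bar x)$ when $\Lambda$ is convex, so there is some $\lambda\in\Lambda(\bar x)$; since $\sigma_S(\lambda)=-\infty$ for $S=\emptyset$ by the usual convention, the asserted inequality holds trivially for this $\lambda$.

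For the main case $T^2_\Lambda\big(g(\bar x);\nabla g(\bar x)d\big)\neq\emptyset$, I would invoke Theorem \ref{second-order-theorem-1} with the choice $A=\R^m$, i.e. inequality \eqref{EqSecOrderOptnew}, to obtain a directional M-multiplier $\lambda\in\Lambda(\bar x;d)$ satisfying
\[\nabla^2_{xx}L(\bar x,\lambda)(d,d)-\hat\sigma_{T^2_\Lambda(g(\bar x);\nabla g(\bar x)d)}(\lambda)\geq 0.\]
The outer second-order tangent set is always closed (being a Painlev\'e--Kuratowski upper limit), and here it is in addition nonempty and, by hypothesis, convex, so Proposition \ref{PropHatSigma}(2) applies and gives $\hat\sigma_{T^2_\Lambda(g(\bar x);\nabla g(\bar x)d)}=\sigma_{T^2_\Lambda(g(\bar x);\nabla g(\bar x)d)}$; substituting this into the displayed inequality produces the claimed estimate. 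Finally, since $\Lambda$ is convex and $d\in C(\bar x)$, the argument given just before the corollary — which uses \eqref{EqDirLimNormalConeConvex} and the critical-direction identity $\nabla f(\bar x)d=0$ — shows $\lambda\in\Lambda(\bar x;d)=\Lambda(\bar x)$, so the multiplier can be regarded as an element of the classical multiplier set, as required.

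There is essentially no obstacle: the substantive content has already been established, namely the construction of the multiplier via the regularized auxiliary problems \eqref{op1} in the proof of Theorem \ref{second-order-theorem-1}, and the coincidence of $\hat\sigma_S$ with $\sigma_S$ for nonempty closed convex $S$ in Proposition \ref{PropHatSigma}. The only points that require a bit of care are bookkeeping: not overlooking the empty-second-order-tangent-set case (which is why Proposition \ref{PropDirFirstOrder} and the support-function convention are brought in), and justifying that the multiplier furnished by Theorem \ref{second-order-theorem-1}, which a priori only lies in the possibly smaller directional set $\Lambda(\bar x;d)$, indeed belongs to $\Lambda(\bar x)$ — immediate from the equality $\Lambda(\bar x;d)=\Lambda(\bar x)$ in the convex case. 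A direct derivation from the primal condition in Theorem \ref{ThBasicSecOrd} via convex duality, in the spirit of \cite[Theorem 3.45]{BS}, is possible as well but unnecessary here.
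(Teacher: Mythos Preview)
Your proposal is correct and follows essentially the same approach as the paper's proof: dispose of the empty second-order tangent set case via the convention $\sigma_\emptyset\equiv-\infty$, and in the nonempty case combine \eqref{EqSecOrderOptnew} from Theorem \ref{second-order-theorem-1} with Proposition \ref{PropHatSigma}(2), then identify $\Lambda(\bar x;d)$ with $\Lambda(\bar x)$ via the convex-case discussion preceding the corollary. If anything, you are slightly more careful than the paper in explicitly invoking Proposition \ref{PropDirFirstOrder} to ensure a multiplier exists in the empty case.
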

{\begin{proof} If $T^2_{\Lambda}\big(g(\bar{x});\nabla g(\bar{x})d\big)= \emptyset$, then  $\sigma_{T^2_{\Lambda}(g(\bar x);\nabla g(\bar x)d ) }(\lambda)=-\infty$ by convention and so there is nothing to prove.
The case of $T^2_{\Lambda}\big(g(\bar{x});\nabla g(\bar{x})d\big)\neq \emptyset$ follows from \eqref{EqSecOrderOptnew} and Proposition \ref{PropHatSigma}(2).
\end{proof}}

Consider the following directional Clarke (C-) multiplier set:
 $${\Lambda^c(\bar x;d)} := \{\lambda \, | \, \nabla_x L(\bar x,\lambda)=0,
      \ \lambda\in N^c_{\Lambda}(g(\bar x);\nabla g(\bar x)d)\}.$$
      It is clear that the set of directional C-multipliers is {closed convex  and} in general larger than the set of directional M-multipliers.

      In what follows, we derive a second-order necessary optimality condition for problem (P) in terms of directional C-multipliers under the  constraint qualification condition \begin{equation}\label{EqClMetrReg}
        \nabla g(\bar{x})^T\lambda=0, \ \ \lambda\in {N^c_\Lambda(g(\bar{x}); \nabla g(\bar x)d) }\ \Longrightarrow\ \lambda=0
      \end{equation}
      {which we will call {\em directional Robinson's constraint qualification} (DirRCQ) in direction $d$.}
      Condition \eqref{EqClMetrReg} is stronger than FOSCMS in direction $d$, under which  by virtue of Proposition \ref{FOSCMS} the constraint mapping $M(x)=g(x)-\Lambda$ is metrically subregular in direction $d$.
\begin{lemma}\label{LemClMetrReg}
The following three statements are equivalent:
\begin{enumerate}
\item[{ (i)}]{DirRCQ in direction $d$ holds.}
\item[{ (ii)}]
\begin{equation}\label{EqRobCQ}
 \nabla g(\bar{x})\Re^n+\widehat{T}_{\Lambda}(g(\bar{x});\nabla g(\bar{x})d)=\Re^m.
\end{equation}
\item[{ (iii)}] The set $\Lambda^c(\bar x;d)$ is compact, whenever it is nonempty.
\end{enumerate}
\end{lemma}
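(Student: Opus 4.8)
The three conditions are the directional analogue of the classical equivalences for Robinson's constraint qualification, so I would mimic the standard proof, using the directional tangent–normal polarity of Proposition \ref{polarity} as the key tool. Concretely I would establish the cycle (i) $\Leftrightarrow$ (ii) and (ii) $\Leftrightarrow$ (iii).

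**Step 1: (i) $\Leftrightarrow$ (ii).** Write $K:=\nabla g(\bar x)\Re^n+\widehat T_\Lambda(g(\bar x);\nabla g(\bar x)d)$. This is a convex cone since $\widehat T_\Lambda(g(\bar x);\nabla g(\bar x)d)$ is closed and convex by Proposition \ref{polarity}. The surjectivity $K=\Re^m$ is equivalent to $K^\circ=\{0\}$, provided $K$ is closed; but even without closedness one has $K=\Re^m \iff \cl K=\Re^m \iff K^\circ=\{0\}$. Now compute the polar: using the polar calculus for a sum, $K^\circ=(\nabla g(\bar x)\Re^n)^\circ\cap \widehat T_\Lambda(g(\bar x);\nabla g(\bar x)d)^\circ=\ker(\nabla g(\bar x)^T)^{\perp\perp}$-type reasoning gives $(\nabla g(\bar x)\Re^n)^\circ=\{\lambda\mid \nabla g(\bar x)^T\lambda=0\}$, and by Proposition \ref{polarity} $\widehat T_\Lambda(g(\bar x);\nabla g(\bar x)d)^\circ=N^c_\Lambda(g(\bar x);\nabla g(\bar x)d)$. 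Hence $K^\circ=\{\lambda\mid \nabla g(\bar x)^T\lambda=0,\ \lambda\in N^c_\Lambda(g(\bar x);\nabla g(\bar x)d)\}$, and $K^\circ=\{0\}$ is exactly DirRCQ. (I should note that $d\in T_\Lambda$-type feasibility makes $\widehat T_\Lambda(g(\bar x);\nabla g(\bar x)d)$ nonempty, so these manipulations are not vacuous.)

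**Step 2: (ii) $\Leftrightarrow$ (iii).** First observe $\Lambda^c(\bar x;d)=\{\lambda\in N^c_\Lambda(g(\bar x);\nabla g(\bar x)d)\mid \nabla g(\bar x)^T\lambda=-\nabla f(\bar x)\}$, the preimage of a point under the linear map $\lambda\mapsto\nabla g(\bar x)^T\lambda$ intersected with the closed convex cone $N^c_\Lambda(g(\bar x);\nabla g(\bar x)d)$; it is always closed and convex. A nonempty closed convex set of this form is bounded (hence compact) if and only if its recession cone is $\{0\}$, and its recession cone is precisely $\{\lambda\in N^c_\Lambda(g(\bar x);\nabla g(\bar x)d)\mid \nabla g(\bar x)^T\lambda=0\}=K^\circ$. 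So (iii) says: whenever $\Lambda^c(\bar x;d)\neq\emptyset$, $K^\circ=\{0\}$. Combined with Step 1 this yields (ii) $\Rightarrow$ (iii) immediately. For the converse (iii) $\Rightarrow$ (ii), I would argue the contrapositive: if $K\neq\Re^m$, pick $0\neq b\notin K$ (or rather $0\ne\lambda_0\in K^\circ$ by Step 1) and exhibit a feasible $\lambda$, i.e.\ show $\Lambda^c(\bar x;d)\neq\emptyset$ while being unbounded — this is the delicate point, since (iii) only bites when the multiplier set is nonempty. The clean way is to instead prove "(iii) $\Rightarrow$ (ii)" as "not (ii) $\Rightarrow$ not (iii)", and "not (iii)" means "$\Lambda^c(\bar x;d)$ nonempty and unbounded", so I need: not (ii) together with $\Lambda^c(\bar x;d)\neq\emptyset$ forces unboundedness — which is exactly the recession-cone computation above ($K^\circ\neq\{0\}$ adds a nonzero recession direction to a nonempty set). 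The only missing piece is that "not (ii)" alone need not give nonemptiness of $\Lambda^c(\bar x;d)$; but (iii) as stated is the implication "$\Lambda^c(\bar x;d)\ne\emptyset \Rightarrow$ compact", whose negation requires nonemptiness, so this is consistent and the logic closes.

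**Main obstacle.** The routine linear-algebra/polarity steps are standard; the point needing care is handling the "whenever it is nonempty" qualifier in (iii) so that the equivalence is genuinely an iff and not just an implication, together with the fact that $K=\nabla g(\bar x)\Re^n+\widehat T_\Lambda(g(\bar x);\nabla g(\bar x)d)$ is a sum of a subspace and a closed convex cone and therefore automatically closed (a subspace plus a closed convex cone in $\Re^m$ is closed), which legitimizes passing freely between $K=\Re^m$, $\cl K=\Re^m$, and $K^\circ=\{0\}$. I would state that closedness fact explicitly, then the rest is bookkeeping via Proposition \ref{polarity}.
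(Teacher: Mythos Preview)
Your approach is essentially the same as the paper's: both establish (i)$\Leftrightarrow$(ii) via the directional tangent--normal polarity of Proposition \ref{polarity} and polar calculus, and both reduce (i)$\Leftrightarrow$(iii) to the recession-cone characterization of boundedness for closed convex sets (the paper cites \cite[Theorem 8.4]{Ro70}). Your handling of the ``whenever nonempty'' qualifier is also in line with what the paper does.

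One correction: your claim that ``a subspace plus a closed convex cone in $\Re^m$ is closed'' is false. A standard counterexample is the second-order cone $C=\{(x,y,z)\mid z\ge\sqrt{x^2+y^2}\}$ together with the line $L=\Re(1,0,1)$: the point $(0,1,0)$ lies in $\cl(C+L)\setminus(C+L)$. Fortunately you do not need this: you already note correctly that for a convex cone $K$ one has $K=\Re^m\iff\cl K=\Re^m\iff K^\circ=\{0\}$ regardless of closedness. The paper makes this passage explicit via relative interiors, invoking \cite[Theorem 6.3]{Ro70} to get $\ri K=\Re^m$ from $\cl K=\Re^m$, hence $K=\Re^m$. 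Replace your closedness remark with that argument and the proof is complete.
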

\begin{proof}Condition \eqref{EqClMetrReg} can be equivalently written as
\begin{equation}\label{EqClMetrReg1}
    \ker \nabla g(\bar x)^T\cap N^c_\Lambda(g(\bar{x}); \nabla g(\bar x)d)=\{0\}.
\end{equation}
By taking polars on  both sides of the equation and using the rule for polar cones \cite[Corollary 11.25]{RW98} and the fact that $\big(N^c_{\Lambda}(g(\bar{x});\nabla g(\bar{x})d)\big)^\circ=\widehat{T}_{\Lambda}(g(\bar{x});\nabla g(\bar{x})d)$, we can see that condition  \eqref{EqClMetrReg} is equivalent to saying
 $\cl\Big(\nabla g(\bar{x})\Re^n+\widehat{T}_{\Lambda}(g(\bar{x});\nabla g(\bar{x})d)\Big)=\Re^m.$ Obviously the set $\nabla g(\bar{x})\Re^n+\widehat{T}_{\Lambda}(g(\bar{x});\nabla g(\bar{x})d)$ is convex and thus $\ri(\nabla g(\bar{x})\Re^n+\widehat{T}_{\Lambda}(g(\bar{x});\nabla g(\bar{x})d))=\Re^m$ by \cite[Theorem 6.3]{Ro70}. It follows that condition \eqref{EqRobCQ} holds and thus the implication``(i)$\Rightarrow$(ii)'' is established. In order to show the reverse implication, just note that by taking polars on both sides of \eqref{EqRobCQ} we obtain \eqref{EqClMetrReg1}. Finally, the equivalence between (i) and (iii) follows from \cite[Theorem 8.4]{Ro70} together with the fact that the recession cone to $\Lambda^c(\bar x;d)$ is exactly the set on the left hand side of \eqref{EqClMetrReg1}.
\end{proof}
\begin{proposition}\label{PropSecOrdClarke}
Let $\bar x$ be feasible for the problem (P). Suppose that $d\in C(\bar{x})$ satisfies
  $T^2_{\Lambda}(g(\bar{x});\nabla g(\bar{x})d)\neq \emptyset$ and {DirRCQ.} 
  Then the following three statements are equivalent:
  \begin{enumerate}
  \item[(i)] The primal second-order necessary condition
  \[\nabla f(\bar x)w+\nabla^2f(\bar x)(d, d) \geq 0, \quad \forall w\in T^2_{\cal F}(\bar{x};d)\]
  of Theorem \ref{ThBasicSecOrd} holds.
  \item[(ii)] For every $u\in T^2_{\Lambda}(g(\bar{x});\nabla g(\bar{x})d)$, there exists $\lambda_u\in \Lambda^c(\bar{x};d)$ such that
  \[
   \nabla^2_{xx}L(\bar{x},\lambda_u)(d,d)-\lambda_u^T u \geq 0.
  \]
  \item[(iii)] For every  nonempty convex subset $C\subseteq T^2_{\Lambda}(g(\bar{x});\nabla g(\bar{x})d)$, there exists $\lambda\in \Lambda^c(\bar{x};d)$ such that
  \[
   \nabla^2_{xx}L(\bar{x},\lambda)(d,d)-\sigma_C(\lambda) \geq 0.
  \]
  \end{enumerate}
\end{proposition}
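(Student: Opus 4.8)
The plan is to prove the cycle $(iii)\Rightarrow(ii)\Rightarrow(i)\Rightarrow(iii)$; throughout I write $D:=T^2_{\Lambda}(g(\bar x);\nabla g(\bar x)d)$ and $\widehat T:=\widehat T_{\Lambda}(g(\bar x);\nabla g(\bar x)d)$. The first two implications are bookkeeping. For $(iii)\Rightarrow(ii)$ one applies $(iii)$ to the nonempty convex singleton $C=\{u\}$ and uses $\sigma_{\{u\}}(\lambda)=\lambda^Tu$. For $(ii)\Rightarrow(i)$, take $w\in T^2_{\cal F}(\bar x;d)$; by the elementary inclusion ``$\subseteq$'' in \eqref{EqSecOrdTanSetF}, $u:=\nabla g(\bar x)w+\nabla^2g(\bar x)(d,d)\in D$, so $(ii)$ gives $\lambda_u\in\Lambda^c(\bar x;d)$ with $\nabla^2_{xx}L(\bar x,\lambda_u)(d,d)-\lambda_u^Tu\ge0$. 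Using $\nabla^2_{xx}L(\bar x,\lambda_u)(d,d)=\nabla^2f(\bar x)(d,d)+\lambda_u^T\nabla^2g(\bar x)(d,d)$ and $\nabla_xL(\bar x,\lambda_u)=0$, which gives $\lambda_u^T\nabla g(\bar x)w=-\nabla f(\bar x)w$, the $\nabla^2g(\bar x)(d,d)$ terms cancel and one obtains $\nabla f(\bar x)w+\nabla^2f(\bar x)(d,d)\ge0$, i.e.\ $(i)$.

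The substance is $(i)\Rightarrow(iii)$, a directional version of the convex-duality argument of \cite{BCS99}. Since $d\in C(\bar x)$ gives $\nabla g(\bar x)d\in T_\Lambda(g(\bar x))$ and DirRCQ is stronger than FOSCMS in direction $d$, Proposition \ref{FOSCMS} shows $M(x)=g(x)-\Lambda$ is metrically subregular at $(\bar x,0)$ in direction $d$, hence $T^2_{\cal F}(\bar x;d)=\{w\mid \nabla g(\bar x)w+\nabla^2g(\bar x)(d,d)\in D\}$ by Proposition \ref{PropSecOrderTanSet}. Fix a nonempty convex $C\subseteq D$ and enlarge it to $\tilde C:=\cl(C+\widehat T)$, a closed convex set. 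By Proposition \ref{PropRegTanCone} one has $D+\widehat T=D$, and $D$ is closed (being an outer limit), so $\tilde C\subseteq D$. Consider the convex linearized problem
\[
 (LP_{\tilde C}):\quad \min_w\ \nabla f(\bar x)w+\nabla^2f(\bar x)(d,d)\qquad\text{s.t.}\qquad \nabla g(\bar x)w+\nabla^2g(\bar x)(d,d)\in\tilde C.
\]
Its feasible set lies inside $T^2_{\cal F}(\bar x;d)$ (because $\tilde C\subseteq D$), so by $(i)$ its optimal value $p^*$ satisfies $p^*\ge0$. Moreover, for any fixed $c_0\in C$ we have $\tilde C\supseteq c_0+\widehat T$, so — using that $\nabla g(\bar x)\R^n$ is a subspace and the reformulation $\nabla g(\bar x)\R^n+\widehat T=\R^m$ of DirRCQ (Lemma \ref{LemClMetrReg}) — the set $\nabla g(\bar x)\R^n+\nabla^2g(\bar x)(d,d)-\tilde C$ is all of $\R^m$; this shows simultaneously that $(LP_{\tilde C})$ is feasible (hence $p^*<\infty$) and that Robinson's constraint qualification holds for $(LP_{\tilde C})$.

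With Robinson's CQ and $p^*$ finite, standard convex Lagrangian duality (cf.\ \cite{BCS99} and \cite[Chapter 3]{BS}) yields zero duality gap and solvability of the dual: there is $\mu^*$ with $\nabla_xL(\bar x,\mu^*)=0$ and
\[
 0\le p^*=\nabla^2f(\bar x)(d,d)+\mu^{*T}\nabla^2g(\bar x)(d,d)-\sigma_{\tilde C}(\mu^*).
\]
Here $\sigma_{\tilde C}=\sigma_C+\sigma_{\widehat T}$, and since $\widehat T$ is a cone, $\sigma_{\widehat T}$ is the indicator of its polar $\widehat T^{\circ}=N^c_{\Lambda}(g(\bar x);\nabla g(\bar x)d)$ by Proposition \ref{polarity}; thus finiteness of $\sigma_{\tilde C}(\mu^*)$ (forced by finiteness of $p^*$) gives $\mu^*\in N^c_{\Lambda}(g(\bar x);\nabla g(\bar x)d)$ and $\sigma_{\tilde C}(\mu^*)=\sigma_C(\mu^*)$. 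Therefore $\mu^*\in\Lambda^c(\bar x;d)$, and the displayed relation rearranges to $\nabla^2_{xx}L(\bar x,\mu^*)(d,d)-\sigma_C(\mu^*)\ge0$, which is $(iii)$.

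I expect the main obstacle to be the enlargement from $C$ to $\tilde C=\cl(C+\widehat T)$ in $(i)\Rightarrow(iii)$: the given convex $C$ may be unbounded, not closed, and need not satisfy $c+\widehat T\subseteq C$, so it cannot be fed directly into the duality scheme; passing to $\tilde C$ is exactly what both keeps the feasible set of $(LP_{\tilde C})$ inside $T^2_{\cal F}(\bar x;d)$ (via $D+\widehat T=D$ from Proposition \ref{PropRegTanCone}) and forces the dual multiplier into $N^c_{\Lambda}(g(\bar x);\nabla g(\bar x)d)$, i.e.\ into $\Lambda^c(\bar x;d)$. The second delicate point is verifying that DirRCQ delivers precisely Robinson's CQ for $(LP_{\tilde C})$, so that strong duality and dual attainment are available; everything else reduces to routine Lagrangian duality and Taylor expansion.
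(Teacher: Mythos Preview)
Your proof is correct. The route differs from the paper's in how statement (iii) is reached. The paper proves the two equivalences $(i)\Leftrightarrow(ii)$ and $(ii)\Leftrightarrow(iii)$ separately: for $(i)\Rightarrow(ii)$ it runs, for each fixed $u\in D$, the conic linear duality argument with constraint set $u+\widehat T$ (this is exactly your argument specialized to $C=\{u\}$, where $\tilde C=u+\widehat T$ is already closed); for $(ii)\Rightarrow(iii)$ it invokes the compactness of $\Lambda^c(\bar x;d)$ from Lemma \ref{LemClMetrReg} and applies a minimax theorem to swap $\inf_{u\in C}\sup_{\lambda}$ and $\sup_{\lambda}\inf_{u\in C}$. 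You instead go directly from $(i)$ to $(iii)$ by enlarging the given convex $C$ to $\tilde C=\cl(C+\widehat T)$ and applying the duality argument once. Your route is slightly more economical in that it bypasses the minimax step and does not require compactness of $\Lambda^c(\bar x;d)$ explicitly (dual attainment comes from Robinson's CQ for $(LP_{\tilde C})$); the paper's route has the expository advantage of isolating the pointwise dual statement $(ii)$ and then making transparent that compactness of the multiplier set is the extra ingredient for the passage to arbitrary convex $C$.
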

 \begin{proof}
Since condition \eqref{EqClMetrReg} implies the metric subregularity  in direction $d$, by Proposition \ref{PropSecOrderTanSet} we have
\begin{equation}  w\in T^2_{\cal F}(\bar x;d) \Longleftrightarrow  \nabla  g(\bar x) w+{\nabla}^2 g(\bar x)(d,d)\in T^2_{\Lambda}(g(\bar x);\nabla g(\bar{x})d). \label{equivalence} \end{equation}
We first show the implication ``(i)$\Rightarrow$(ii)''.
Take $u\in T^2_\Lambda(g(\bar{x});\nabla g(\bar{x})d)$. Then $$u+\widehat{T}_\Lambda(g(\bar{x});{\nabla g(\bar x)}d)\subseteq T^2_{\Lambda}(g(\bar{x});\nabla g(\bar{x})d)$$ by virtue of Proposition \ref{PropRegTanCone}. Since $ \nabla f(\bar{x})w+\nabla^2f(\bar{x})(d,d)\geq 0$ for all $w \in T^2_{\cal F}\big(\bar x; d \big)$, the following conic linear program
   \begin{flalign}\label{conicp}
 \min_w \ \ & \nabla f(\bar{x})w+\nabla^2f(\bar{x})(d,d) \nonumber \\
 \mbox{s.t.}  \ \ & \nabla g(\bar{x})w+\nabla^2g(\bar{x})(d,d)\in u+\widehat{T}_{\Lambda}(g(\bar{x});\nabla g(\bar{x})d)
 \end{flalign}
has  nonnegative optimal value. The dual program of the  conic linear program (\ref{conicp}) is
 \begin{flalign*}
 \max_{\lambda\in (\widehat{T}_{\Lambda}(g(\bar{x});\nabla g(\bar{x})d))^\circ} \ \ &
 \nabla^2_{xx}L(\bar{x},\lambda)(d,d)-\lambda^Tu \\
 s.t.  \ \ & \nabla f(\bar{x})+\nabla g(\bar{x})^T\lambda=0.
 \end{flalign*}
Since by Proposition \ref{polarity},
 $(\widehat{T}_{\Lambda}(g(\bar{x});\nabla g(\bar{x})d))^\circ=N^c_{\Lambda}(g(\bar{x});\nabla g(\bar{x})d)$, the above dual problem can be equivalently written as
 \[
 \max_{\lambda\in\Lambda^c(\bar{x};d)} \{\nabla^2_{xx}L(\bar{x},\lambda)(d,d)-\lambda^Tu\}.
 \]
By Lemma \ref{LemClMetrReg}, condition \eqref{EqClMetrReg} is equivalent to \eqref{EqRobCQ} and it is easy to see that the latter implies
 $$ 0\in {\rm int} \{\nabla g(\bar{x})\Re^n+\nabla^2g(\bar{x})(d,d)-\widehat{T}_{\Lambda}(g(\bar{x});\nabla g(\bar{x})d)\},$$
cf. \cite[(2.354)]{BS}.
Consequently by \cite[Theorem 2.187]{BS}, there is no dual gap and the dual program has an optimal solution $\lambda_u$ such that
\[ \max_{\lambda\in\Lambda^c(\bar{x};d)}\{\nabla^2_{xx}L(\bar{x},\lambda)(d,d)-\lambda^Tu\}=\nabla^2_{xx}L(\bar{x},\lambda_u)(d,d)-\lambda_u^Tu\geq 0.\]
This proves ``(i)$\Rightarrow$(ii)''.

In order to show the reverse implication, take $w\in T^2_{\mathcal{F}}(\bar x;d)$ together with $u:=\nabla  g(\bar x) w+{\nabla}^2 g(\bar x)(d,d)\in T^2_\Lambda(g(\bar x);\nabla g(\bar x)w)$, where the existence of such $w$ is guaranteed by virtue of (\ref{equivalence}),  and $\lambda_u\in\Lambda^c(\bar x;d)$ fulfilling $ \nabla^2_{xx}L(\bar{x},\lambda_u)(d,d)-\lambda_u^T u \geq 0$. Then
\[\nabla f(\bar x)w+\nabla^2f(\bar x)(d,d)=-\lambda_u^T\nabla g(\bar x)w+\nabla^2f(\bar x)(d,d)=\nabla^2_{xx}L(\bar{x},\lambda_u)(d,d)-\lambda_u^T u \geq 0\]
showing ``(ii)$\Rightarrow$(i)''.\\
Since ``(iii)$\Rightarrow$(ii)'' always hold, there remains to show ``(ii)$\Rightarrow$(iii)''. Consider a nonempty convex subset $C\subseteq T^2_{\Lambda}(g(\bar{x});\nabla g(\bar{x})d)$. Since $\sigma_C=\sigma_{\cl C}$, we may assume that $C$ is closed.
For every $u\in C$ the corresponding $\lambda_u$ according to (ii) fulfills $\lambda_u\in\Lambda^c(\bar x;d)$ and hence $\Lambda^c(\bar x;d)$ is nonempty. We conclude from Lemma \ref{LemClMetrReg} that $\Lambda^c(\bar x;d)$ is compact and therefore
by the minimax theorem  in \cite[Corollary 37.3.2]{Ro70} we have
\begin{eqnarray}\label{EqAux1}\inf_{u\in C}\sup_{\lambda\in \Lambda^c(\bar x;d)}\nabla^2_{xx}L(\bar{x},\lambda)(d,d)-\lambda^Tu&=&\sup_{\lambda\in \Lambda^c(\bar x;d)}\inf_{u\in C}\nabla^2_{xx}L(\bar{x},\lambda)(d,d)-\lambda^Tu\\
\label{EqAux2}&=&\sup_{\lambda\in \Lambda^c(\bar x;d)}\nabla^2_{xx}L(\bar{x},\lambda)(d,d)-\sigma_C(\lambda).\end{eqnarray}
 Due to (ii) the quantity on left hand side of \eqref{EqAux1} is nonnegative. On the other hand, the supremum in \eqref{EqAux2} is attained at some $\lambda$, since $\Lambda^c(\bar x;d)$ is compact and $-\sigma_C(\cdot)$ is upper semi-continuous. This completes the proof.
\end{proof}
The following second-order necessary optimality condition follows immediately by Theorem \ref{ThBasicSecOrd} and Proposition \ref{PropSecOrdClarke}.
\begin{corollary}\label{CorClSecondOrder}
    Let $\bar{x}$ be a local optimal solution of problem (P).  Suppose that $d\in C(\bar{x})$ satisfies
  $T^2_{\Lambda}(g(\bar{x});\nabla g(\bar{x})d)\neq \emptyset$ and {DirRCQ.} 
  Then, for every nonempty convex subset $C\subseteq T^2_{\Lambda}(g(\bar{x});\nabla g(\bar{x})d)$ there is some  $\lambda\in \Lambda^c(\bar{x};d)$ such that
  \[
   \nabla^2_{xx}L(\bar{x},\lambda)(d,d)-\sigma_C(\lambda) \geq 0.
  \]
\end{corollary}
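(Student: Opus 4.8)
The plan is to obtain the statement as a corollary of the basic primal second-order necessary condition Theorem~\ref{ThBasicSecOrd} together with the primal--dual equivalence of Proposition~\ref{PropSecOrdClarke}. I would proceed in two steps: first verify that statement~(i) of Proposition~\ref{PropSecOrdClarke} holds at $\bar x$ in the direction $d$; then invoke the implication ``(i)$\Rightarrow$(iii)'' of that proposition to read off the conclusion.

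For the first step, since DirRCQ in direction $d$ is assumed and, as noted just before Lemma~\ref{LemClMetrReg}, condition~\eqref{EqClMetrReg} is stronger than FOSCMS in direction $d$, Proposition~\ref{FOSCMS} guarantees that the feasible set mapping $M(x)=g(x)-\Lambda$ is metrically subregular at $(\bar x,0)$ in direction $d$. Because $d\in C(\bar x)$ we have $\nabla g(\bar x)d\in T_\Lambda(g(\bar x))$, so the equivalence~\eqref{EqTanF} of Proposition~\ref{PropSecOrderTanSet} yields $d\in T_{\cal F}(\bar x)$, and since $\bar x$ is a local minimizer, Lemma~\ref{Lem3.1} gives $\nabla f(\bar x)d=0$. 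Hence Theorem~\ref{ThBasicSecOrd} applies and delivers
\[\nabla f(\bar x)w+\nabla^2 f(\bar x)(d,d)\geq 0\qquad\forall\,w\in T^2_{\cal F}(\bar x;d),\]
which is exactly statement~(i) of Proposition~\ref{PropSecOrdClarke}.

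For the second step, I note that the standing hypotheses of Proposition~\ref{PropSecOrdClarke} --- feasibility of $\bar x$, $d\in C(\bar x)$, $T^2_\Lambda(g(\bar x);\nabla g(\bar x)d)\neq\emptyset$, and DirRCQ --- are precisely those of the corollary. Therefore the equivalence ``(i)$\Leftrightarrow$(iii)'' of Proposition~\ref{PropSecOrdClarke} converts the primal condition just obtained into the desired dual statement: for every nonempty convex subset $C\subseteq T^2_\Lambda(g(\bar x);\nabla g(\bar x)d)$ there exists $\lambda\in\Lambda^c(\bar x;d)$ with $\nabla^2_{xx}L(\bar x,\lambda)(d,d)-\sigma_C(\lambda)\geq 0$, which completes the argument. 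I do not expect a genuine obstacle within the corollary itself; the substantive work has already been carried out inside Proposition~\ref{PropSecOrdClarke} (the conic-duality zero-gap argument via \cite[Theorem~2.187]{BS}, the polarity $(\widehat T_\Lambda(g(\bar x);\nabla g(\bar x)d))^\circ=N^c_\Lambda(g(\bar x);\nabla g(\bar x)d)$ from Proposition~\ref{polarity}, compactness of $\Lambda^c(\bar x;d)$ from Lemma~\ref{LemClMetrReg}, and the minimax theorem) and inside Proposition~\ref{PropSecOrderTanSet} (the second-order chain rule under directional metric subregularity). The only point needing a moment's care is the passage from DirRCQ to the metric subregularity required by Lemma~\ref{Lem3.1} and Proposition~\ref{PropSecOrderTanSet}, which is supplied by Proposition~\ref{FOSCMS}.
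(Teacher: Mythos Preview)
Your proposal is correct and follows essentially the same route as the paper, which simply states that the corollary ``follows immediately by Theorem~\ref{ThBasicSecOrd} and Proposition~\ref{PropSecOrdClarke}.'' You have merely made explicit the intermediate steps (DirRCQ $\Rightarrow$ FOSCMS $\Rightarrow$ directional metric subregularity via Proposition~\ref{FOSCMS}, then $d\in T_{\cal F}(\bar x)$ and $\nabla f(\bar x)d=0$ via \eqref{EqTanF} and Lemma~\ref{Lem3.1}) that the paper leaves implicit.
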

A close look at the proof of Theorem \ref{second-order-theorem-1} shows that the second-order necessary conditions stated therein are implied by the primal second-order necessary condition of Theorem \ref{ThBasicSecOrd}. Hence, in view of Proposition \ref{PropSecOrdClarke}, the second-order necessary condition of Corollary \ref{CorClSecondOrder} are stronger than the one of Theorem \ref{second-order-theorem-1}. However, the constraint qualification {DirRCQ} 
 used in Corollary \ref{CorClSecondOrder} is also stronger than the one of Theorem \ref{second-order-theorem-1}.

We now want to compare Corollary \ref{CorClSecondOrder} with the classical result of \cite[Theorem 3.45]{BS} under the additional assumption that $\Lambda$ is convex.
{
By \eqref{EqDirLimNormalConeConvex} and using the convexity of $N_\Lambda(g(\bar x);\nabla g(\bar x)d)$, for every $d$ with $\nabla g(\bar x)d\in T_\Lambda(g(\bar x))$ there holds
\begin{eqnarray*}\nabla g(\bar x)^T\lambda=0,\ \lambda\in N_\Lambda^c(g(\bar x);\nabla g(\bar x)d)&\Leftrightarrow&  \nabla g(\bar x)^T\lambda=0,\ \lambda\in N_\Lambda(g(\bar x)),\ \lambda\in\{\nabla g(\bar x)d\}^\perp\\
&\Leftrightarrow&\nabla g(\bar x)^T\lambda=0,\ \lambda\in N_\Lambda(g(\bar x))\\
&\Leftrightarrow& \lambda\in (\nabla g(\bar x)\R^n)^\perp\cap N_\Lambda(g(\bar x)).
\end{eqnarray*}
Thus, by \cite[Proposition 2.97]{BS} the directional Robinson's constraint qualification 
 is equivalent to {the non-directional one} 
\[\nabla g(\bar x)\Re^n+T_\Lambda(g(\bar x))=\Re^m.\]
Since we also have $\Lambda^c(\bar x;d)=\Lambda(\bar x)$ as pointed out above, in case of convex $\Lambda$ the Corollary \ref{CorClSecondOrder} is equivalent with \cite[Theorem 3.45]{BS}.
}

\if{
Since  $\Lambda$ is convex, by \cite[Corollary 6.29]{RW98}, for every $d$ with $\nabla g(\bar x)d\in T_\Lambda(g(\bar x))$ we have
\[T_\Lambda(g(\bar x))=\widehat T_\Lambda(g(\bar x))\subseteq \widehat T_\Lambda(g(\bar x);\nabla g(\bar x)d)\subseteq T_\Lambda(g(\bar x))\]
implying $\widehat T_\Lambda(g(\bar x);\nabla g(\bar x)d)= T_\Lambda(g(\bar x))$ and consequently $N^c_\Lambda((g(\bar x);\nabla g(\bar x)d))=N_\Lambda(g(\bar x))$. Thus, by Lemma \ref{LemClMetrReg} the constraint qualification \eqref{EqClMetrReg} is equivalent to Robinson's constraint qualification
\[\nabla g(\bar x)\Re^n+T_\Lambda(g(\bar x))=\Re^m\]
and the directional C-multiplier set coincides with the classical multiplier set $\Lambda(\bar x)=\{\lambda\in N_\Lambda(g(\bar x))\;\mid\;\nabla_x L(\bar x,\lambda)=0\}$.
Hence, in case of convex $\Lambda$ the Corollary \ref{CorClSecondOrder} is equivalent with \cite[Theorem 3.45]{BS}.
 }\fi

When the directional C-multiplier set $\Lambda^c(\bar x;d)=\{\lambda_0\}$ is a singleton,
it is easy to see from Proposition \ref{PropSecOrdClarke}(ii) and Theorem \ref{ThBasicSecOrd} that
\begin{equation}\label{EqSecOrdSingleton}\nabla_{xx}^2L(\bar x,\lambda_0)(d,d)-\sigma_{T^2_\Lambda(g(\bar x);\nabla g(\bar x)d)}(\lambda_0)\geq 0\end{equation}
is a necessary second-order condition at a local minimizer $\bar x$. Note that by definition $\sigma_{T^2_\Lambda(g(\bar x);\nabla g(\bar x)d)}(\lambda_0)=-\infty$ whenever $T^2_\Lambda(g(\bar x);\nabla g(\bar x)d)=\emptyset$. In this case the above second-order optimality condition holds automatically.
We now try to enhance the condition \eqref{EqClMetrReg} so that the directional C-multiplier  set $\Lambda^c(\bar{x};d)$ is a singleton. As we will show below this is achieved by the directional non-degeneracy condition
 \begin{equation}\label{EqNondegeneracy}
  \nabla g(\bar{x})^T\lambda=0, \ \lambda\in { {\rm span\;}N_{\Lambda}(g(\bar{x});\nabla g(\bar x)d)}\ \Longrightarrow\ \lambda=0
  \end{equation}
 defined for $\bar x$ feasible for the problem (P) and direction $d$ satisfying $\nabla g(\bar x)d\in T_\Lambda(g(\bar x))$, where ${\rm span\;}S$ denotes the affine hull of the set $S$.

 Recall that  by \cite[Definition 6]{GY17} we call a subspace $L$ the generalized linearity space of set $C$ and denote it by  ${\cal L}(C)$ provided that it is   the largest subspace $L$  such that $C +L\subseteq C$.  Note that when $C$ is a convex set,  the generalized linearity space is reduced to the linearity space (\cite[page 65]{Ro70}) and in the case when $C$ is a closed convex cone,  we have ${\cal L}(C)=(-C)\cap C$.

 Given $d\in C(\bar x)$, define the   set of {\em strong multipliers in direction $d$} as
\[\Lambda^s(\bar x;d):=\{\lambda\in \widehat N_{T_\Lambda(g(\bar x))}(\nabla g(\bar x)d)\,|\,\nabla_x L(\bar x,\lambda)=0\}.\]
By \eqref{eq:1new} we have $\Lambda^s(\bar x;d)\subseteq\Lambda(\bar x;d)$.
\begin{lemma}\label{LemDirS_stat}
  Let $\bar{x}$ be a local optimal solution of problem (P) and suppose that the directional non-degeneracy condition \eqref{EqNondegeneracy} holds for a critical direction $d\in C(\bar x)$. Then
  \[\Lambda^s(\bar x;d)=\Lambda(\bar x;d)=\Lambda^c(\bar x;d)=\{\lambda_0\}\]
  is a singleton.
\end{lemma}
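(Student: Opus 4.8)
The plan is to show that the directional non-degeneracy condition \eqref{EqNondegeneracy} forces all three multiplier sets to collapse to a single point, by first establishing that $\Lambda^s(\bar x;d)$ is a singleton and then squeezing the other sets between it and $\Lambda^c(\bar x;d)$. The first observation is that \eqref{EqNondegeneracy}, being stronger than DirRCQ (since ${\rm span\,}N_\Lambda(g(\bar x);\nabla g(\bar x)d)\supseteq N^c_\Lambda(g(\bar x);\nabla g(\bar x)d)$), implies that Proposition \ref{PropDirFirstOrder} applies once we check directional metric subregularity; thus $\Lambda(\bar x;d)\neq\emptyset$. I would then argue that $\Lambda^s(\bar x;d)$ is nonempty as well: inspecting the proof of Theorem \ref{second-order-theorem-1} (or Proposition \ref{PropDirFirstOrder}), the multiplier produced actually lies in $\widehat N_{T^2_\Lambda(g(\bar x);\nabla g(\bar x)d)}(u)$ for a suitable $u$, and via Lemma \ref{relationship-normal-cone} and the regular-normal-cone structure one lands in $\widehat N_{T_\Lambda(g(\bar x))}(\nabla g(\bar x)d)$; alternatively, one can run the first-order argument directly on the linearized cone $T_\Lambda(g(\bar x))$. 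This gives $\Lambda^s(\bar x;d)\neq\emptyset$.

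Next, the core algebraic step: I claim \eqref{EqNondegeneracy} implies $\Lambda^s(\bar x;d)$ has at most one element. Suppose $\lambda_1,\lambda_2\in\Lambda^s(\bar x;d)$. Both satisfy $\nabla_x L(\bar x,\lambda_i)=0$, so $\nabla g(\bar x)^T(\lambda_1-\lambda_2)=\nabla f(\bar x)^T-\nabla f(\bar x)^T=0$. Moreover $\lambda_1,\lambda_2\in\widehat N_{T_\Lambda(g(\bar x))}(\nabla g(\bar x)d)\subseteq N_\Lambda(g(\bar x);\nabla g(\bar x)d)$ by \eqref{eq:1new}, hence $\lambda_1-\lambda_2\in{\rm span\,}N_\Lambda(g(\bar x);\nabla g(\bar x)d)$. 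Applying \eqref{EqNondegeneracy} to $\lambda_1-\lambda_2$ yields $\lambda_1=\lambda_2$. So $\Lambda^s(\bar x;d)=\{\lambda_0\}$ is a singleton.

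Finally I would close the chain of inclusions. We already know $\Lambda^s(\bar x;d)\subseteq\Lambda(\bar x;d)\subseteq\Lambda^c(\bar x;d)$ (the first inclusion is stated right before the lemma via \eqref{eq:1new}, the second because $N_\Lambda(g(\bar x);\nabla g(\bar x)d)\subseteq N^c_\Lambda(g(\bar x);\nabla g(\bar x)d)$). So it suffices to prove $\Lambda^c(\bar x;d)\subseteq\{\lambda_0\}$. Take $\lambda\in\Lambda^c(\bar x;d)$; then $\nabla g(\bar x)^T(\lambda-\lambda_0)=0$, and both $\lambda,\lambda_0\in N^c_\Lambda(g(\bar x);\nabla g(\bar x)d)=\cl\co N_\Lambda(g(\bar x);\nabla g(\bar x)d)\subseteq{\rm span\,}N_\Lambda(g(\bar x);\nabla g(\bar x)d)$ (the span of a set contains its closed convex hull, as spans are closed and convex and contain the set). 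Hence $\lambda-\lambda_0\in{\rm span\,}N_\Lambda(g(\bar x);\nabla g(\bar x)d)$, and \eqref{EqNondegeneracy} gives $\lambda=\lambda_0$. This proves $\Lambda^c(\bar x;d)=\{\lambda_0\}$ and the lemma follows. The main obstacle I anticipate is the nonemptiness of $\Lambda^s(\bar x;d)$: one must verify that the first-order multiplier produced by the optimality machinery (Theorem \ref{ThBasicSecOrd}/Proposition \ref{PropDirFirstOrder}) can in fact be taken in the \emph{regular} normal cone $\widehat N_{T_\Lambda(g(\bar x))}(\nabla g(\bar x)d)$ rather than merely the limiting directional normal cone — this likely requires rerunning the penalization argument of Theorem \ref{second-order-theorem-1} with the first-order linearized feasible cone $T_{\mathcal F}(\bar x)=\{w\mid\nabla g(\bar x)w\in T_\Lambda(g(\bar x))\}$ in place of the second-order tangent set, together with the directional tangent–normal polarity of Proposition \ref{polarity}.
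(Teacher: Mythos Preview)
Your uniqueness argument (the second and third steps) is correct and matches the paper's closing lines almost verbatim. The genuine gap is the nonemptiness of $\Lambda^s(\bar x;d)$, and the fixes you propose do not deliver a multiplier in the \emph{regular} normal cone $\widehat N_{T_\Lambda(g(\bar x))}(\nabla g(\bar x)d)$. In the proof of Theorem \ref{second-order-theorem-1} the multiplier $\lambda_\epsilon$ lies only in the \emph{limiting} cone $N_{T^2_\Lambda(g(\bar x);\nabla g(\bar x)d)}(u_\epsilon)$ (that is all \cite[Theorem 3]{GY17} provides under metric subregularity), Lemma \ref{relationship-normal-cone} only maps this into the limiting directional cone $N_\Lambda(g(\bar x);\nabla g(\bar x)d)$, and passing to the limit $\epsilon\downarrow 0$ cannot restore regularity. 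Re-running the penalization on $T_{\mathcal F}(\bar x)$ has the same defect: you would land in $N_{T_\Lambda(g(\bar x))}(\nabla g(\bar x)d)$, not in $\widehat N_{T_\Lambda(g(\bar x))}(\nabla g(\bar x)d)$.

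The paper's route is different and uses the non-degeneracy condition \eqref{EqNondegeneracy} for \emph{existence}, not only uniqueness. It first shows, via directional metric subregularity (implied by FOSCMS, itself implied by \eqref{EqNondegeneracy}), that $d$ is a \emph{local} minimizer of the linearized program $\min_{d'}\nabla f(\bar x)d'$ subject to $\nabla g(\bar x)d'\in T_\Lambda(g(\bar x))$; this gives $-\nabla f(\bar x)\in\widehat N_{\nabla g(\bar x)^{-1}(T_\Lambda(g(\bar x)))}(d)$ directly from the basic first-order condition. The crucial step is then an exact chain rule for \emph{regular} normal cones,
\[
\widehat N_{\nabla g(\bar x)^{-1}(T_\Lambda(g(\bar x)))}(d)=\nabla g(\bar x)^T\widehat N_{T_\Lambda(g(\bar x))}(\nabla g(\bar x)d),
\]
supplied by \cite[Theorem 4]{GO16-1}. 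That result requires the non-degeneracy-type hypothesis $\nabla g(\bar x)\R^n+\mathcal{L}\big(T_{T_\Lambda(g(\bar x))}(\nabla g(\bar x)d)\big)=\R^m$ (together with metric subregularity of $DM(\bar x,0)$ from Lemma \ref{LemSubregDM}), which the paper extracts from \eqref{EqNondegeneracy} by taking polars to obtain $\nabla g(\bar x)\R^n+\mathcal{L}\big(\widehat T_\Lambda(g(\bar x);\nabla g(\bar x)d)\big)=\R^m$ and then invoking Proposition \ref{PropRegTanCone} to pass from $\mathcal{L}(\widehat T_\Lambda(g(\bar x);\nabla g(\bar x)d))$ to $\mathcal{L}(T_{T_\Lambda(g(\bar x))}(\nabla g(\bar x)d))$. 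This regular-normal-cone chain rule is the missing ingredient in your sketch.
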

\begin{proof}
Since $d\in C(\bar x)$ implies that $\nabla g(\bar x) d\in T_\Lambda(g(\bar x))$ and \eqref{EqNondegeneracy} implies that
  \[\nabla g(\bar{x})^T\lambda=0,\ \lambda\in N_\Lambda(g(\bar{x}); \nabla g(\bar x)d) \Longrightarrow \lambda=0,\]
  FOSCMS for direction $d$ holds and by Proposition \ref{FOSCMS}, the set-valued map $g(x)-\Lambda$ is metrically subregular at $(\bar x, 0)$ in direction $d$. By definition there are reals $\kappa,\rho,\delta>0$ such that
  \begin{equation}
  \dist(x,\mathcal{F})\leq \kappa \dist(g(x),\Lambda)\ \forall x\in\bar x+V_{\rho,\delta}(d).\label{directionaleb} \end{equation}
  We now show by contradiction that $d$ is a locally optimal solution of the program
  \begin{equation}\label{EqLinProgr}\min_{d'} \nabla f(\bar x)d'\quad\mbox{subject to}\quad \nabla g(\bar x)d'\in T_\Lambda(g(\bar x)).\end{equation}
  Assume on the contrary that there is a sequence $d_k\to d$ satisfying $\nabla g(\bar x)d_k\in T_\Lambda(g(\bar x))$ and $\nabla f(\bar x)d_k<0=\nabla f(\bar x)d$. Then we can find some index $\bar k$ and some $\bar t>0$ such that  $\norm{\bar t d_{\bar k}}<\rho$ and
  \[\big\Vert \Vert d\Vert d_{\bar k}-\Vert d_{\bar k}\Vert d\big\Vert\leq \big\Vert \Vert d\Vert d_{\bar k}-\Vert d\Vert d\big\Vert+ \big | \norm{d_{\bar k}}-\norm{d}\big |\norm{d}\leq 2\norm{d_{\bar k}-d}\norm{d}\leq\delta\norm{d_{\bar k}}\norm{d}\]
  implying $td_{\bar k}\in \V_{\rho,\delta}(d)       $ $\forall t\in[0,\bar t]$. By (\ref{directionaleb}),
  there is some sequence $t_n\downarrow 0$ such for all $t_n<\bar t$ we can find some $x_n\in\mathcal{F}$ satisfying
  \[\norm{x_n-(\bar x+t_nd_{\bar k})}\leq\kappa \dist(g(\bar x+t_n d_{\bar k}),\Lambda)=\kappa\big(\dist(g(\bar x)+t_n\nabla g(\bar x)d_{\bar k},\Lambda)+o(t_n)\big)=o(t_n),\]
  where the last equality follows from  the fact that $\nabla g(\bar x)d_{\bar k}\in T_\Lambda(g(\bar x))$.
  It follows that $f(x_n)=f(\bar x)+t_n\nabla f(\bar x)d_{\bar k}+o(t_n)< f(\bar x)$ for all $t_n$ sufficiently small contradicting the optimality of $\bar x$ for the problem (P). Hence $d$ is a local minimizer for the problem \eqref{EqLinProgr} and the basic optimality condition \cite[Theorem 6.12]{RW98}
  \begin{equation}\label{EqBasicOpt}-\nabla f(\bar x)\in\widehat N_{\nabla g(\bar x)^{-1}\big(T_\Lambda(g(\bar x))\big)}(d)\end{equation}
  is fulfilled.

  By taking polars in  both sides of the  directional non-degeneracy condition  \eqref{EqNondegeneracy}, we have
 \begin{equation*}
 \nabla g(\bar{x})\Re^n+\big({\rm span}N_{\Lambda}(g(\bar{x});\nabla g(\bar x)d)\big)^\circ=\Re^m.
 \end{equation*}
Since
$${\rm span\;}N_{\Lambda}(g(\bar{x});\nabla g(\bar x)d)={\rm span\;}\cl\co N_{\Lambda}(g(\bar{x});\nabla g(\bar x)d)= {\rm span}N^c_{\Lambda}(g(\bar{x});\nabla g(\bar x)d)$$
and the Clarke directional normal cone is a closed convex cone,
we have
 \begin{eqnarray*}\lefteqn{\big({\rm span\;}N_{\Lambda}(g(\bar{x});\nabla g(\bar x)d)\big)^\circ}
   \\&=& \big({\rm span\;}N^c_{\Lambda}(g(\bar x);\nabla g(\bar x)d)\big)^\circ=\big(N_\Lambda ^c (g(\bar x);\nabla g(\bar x)d) -N_\Lambda ^c (g(\bar x);\nabla g(\bar x)d)\big)^\circ\\
 &=&N_\Lambda^c(g(\bar{x});\nabla g(\bar x)d)^\circ\cap \big(-N_\Lambda^c(g(\bar{x});\nabla g(\bar x)d)\big)^\circ\\
 &=& \widehat{T}_\Lambda(g(\bar{x});\nabla g(\bar x)d)\cap \big(-\widehat{T}_\Lambda(g(\bar{x});\nabla g(\bar x)d)\big)=\mathcal{L}(\widehat{T}_{\Lambda}(g(\bar{x});\nabla g(\bar x)d)),
 \end{eqnarray*}
 where the last equality follows from the fact that the directional regular tangent cone is a closed  convex cone. Hence we have shown that the directional non-degeneracy condition \eqref{EqNondegeneracy} is equivalent to
 \[\nabla g(\bar{x})\Re^n+\mathcal{L}(\widehat{T}_{\Lambda}(g(\bar{x});\nabla g(\bar x)d))=\Re^m.\]
 Note that $\mathcal{L}(\widehat{T}_{\Lambda}(g(\bar{x});\nabla g(\bar x)d))\subseteq \mathcal{L}(T_{T_{\Lambda}(g(\bar{x}))}(\nabla g(\bar x)d))$ because
 \[T_{T_{\Lambda}(g(\bar{x}))}(\nabla g(\bar x)d)+\widehat{T}_{\Lambda}(g(\bar{x});\nabla g(\bar x)d)=T_{T_{\Lambda}(g(\bar{x}))}(\nabla g(\bar x)d)\]
 by Proposition \ref{PropRegTanCone}. It follows that
\[\nabla g(\bar{x})\Re^n + \mathcal{L}(T_{T_{\Lambda}(g(\bar{x}))}(\nabla g(\bar x)d))= \Re^m\]
and, since the mapping $w\rightrightarrows \nabla g(\bar x)w-T_\Lambda(g(\bar x))$ is metrically subregular at $(d,0)$ by Lemma \ref{LemSubregDM}, we may invoke \cite[Theorem 4]{GO16-1} to obtain
$\widehat N_{\nabla g(\bar x)^{-1}\big(T_\Lambda(g(\bar x))\big)}(d)=\nabla g(\bar x)^T\widehat N_{T_\Lambda(g(\bar x))}(\nabla g(\bar x)d)$. By \eqref{EqBasicOpt}, it follows that $\emptyset\not=\Lambda^s(\bar x;d)\subseteq \Lambda(\bar x;d)\subseteq\Lambda^c(\bar x;d)$.    Since
   \[N^c_{\Lambda}(g(\bar{x});\nabla g(\bar{x})d)-N^c_{\Lambda}(g(\bar{x});\nabla g(\bar{x})d)= {\rm span\;}N^c_{\Lambda}(g(\bar{x});\nabla g(\bar{x})d)={\rm span\;}N_{\Lambda}(g(\bar{x});\nabla g(\bar{x})d),\]
   condition \eqref{EqNondegeneracy} ensures that $\Lambda^c(\bar{x};d)$ is a singleton $\{\lambda_0\}$. Hence   the claimed result $\Lambda^s(\bar x;d)= \Lambda(\bar x;d)=\Lambda^c(\bar x;d)=\{\lambda_0\}$ follows.
\end{proof}
\begin{corollary}\label{CorSpanNormal}
  Let $\bar{x}$ be a local optimal solution of problem (P). Suppose that for $d\in C(\bar{x})$  the  directional non-degeneracy condition \eqref{EqNondegeneracy} holds.
   Then $\Lambda^s(\bar x;d) = \Lambda(\bar x;d)= \Lambda^c(\bar x;d)= \{\lambda_0\}$ and the second-order condition \eqref{EqSecOrdSingleton} is fulfilled.
  \end{corollary}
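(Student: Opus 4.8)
The plan is to obtain Corollary~\ref{CorSpanNormal} directly from Lemma~\ref{LemDirS_stat} together with the dual second-order necessary condition already at our disposal. First I would invoke Lemma~\ref{LemDirS_stat}: since $\bar x$ is a local minimizer of (P) and the directional non-degeneracy condition \eqref{EqNondegeneracy} holds for the critical direction $d\in C(\bar x)$, that lemma yields at once $\Lambda^s(\bar x;d)=\Lambda(\bar x;d)=\Lambda^c(\bar x;d)=\{\lambda_0\}$ for some $\lambda_0$. This settles the first assertion of the corollary, so only the second-order inequality \eqref{EqSecOrdSingleton} remains to be verified.

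Next I would dispose of the degenerate case $T^2_\Lambda(g(\bar x);\nabla g(\bar x)d)=\emptyset$ separately: there the convention $\sigma_{T^2_\Lambda(g(\bar x);\nabla g(\bar x)d)}(\lambda_0)=-\infty$ makes \eqref{EqSecOrdSingleton} hold automatically. So assume $T^2_\Lambda(g(\bar x);\nabla g(\bar x)d)\neq\emptyset$. I would then observe that \eqref{EqNondegeneracy} implies DirRCQ \eqref{EqClMetrReg}, because $N^c_\Lambda(g(\bar x);\nabla g(\bar x)d)\subseteq {\rm span}\,N^c_\Lambda(g(\bar x);\nabla g(\bar x)d)={\rm span}\,N_\Lambda(g(\bar x);\nabla g(\bar x)d)$ (the last identity being exactly the one used in the proof of Lemma~\ref{LemDirS_stat}), so any $\lambda$ admissible for the implication in \eqref{EqClMetrReg} is admissible for the one in \eqref{EqNondegeneracy} and hence vanishes. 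Thus the hypotheses of Corollary~\ref{CorClSecondOrder} are met for this $d$.

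Finally I would apply Corollary~\ref{CorClSecondOrder} to the one-point convex sets $C=\{u\}$ with $u\in T^2_\Lambda(g(\bar x);\nabla g(\bar x)d)$: it produces a multiplier in $\Lambda^c(\bar x;d)$, which by the first step must be $\lambda_0$, satisfying $\nabla^2_{xx}L(\bar x,\lambda_0)(d,d)-\lambda_0^T u\geq 0$. Since $u$ was arbitrary in $T^2_\Lambda(g(\bar x);\nabla g(\bar x)d)$, taking the supremum over $u$ gives $\nabla^2_{xx}L(\bar x,\lambda_0)(d,d)\geq\sigma_{T^2_\Lambda(g(\bar x);\nabla g(\bar x)d)}(\lambda_0)$, which is precisely \eqref{EqSecOrdSingleton}. (Equivalently, one may invoke Proposition~\ref{PropSecOrdClarke}(ii), whose statement~(i) is guaranteed at the local minimizer $\bar x$ by Theorem~\ref{ThBasicSecOrd}.)

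The argument is essentially bookkeeping; the only point needing a little care is the remark that directional non-degeneracy is genuinely stronger than DirRCQ — i.e.\ that passing from the directional Clarke normal cone to its span does not weaken the constraint qualification — together with the separate treatment of the empty outer second-order tangent set, where the support function is $-\infty$ by convention. No new variational-analytic estimate beyond Lemma~\ref{LemDirS_stat}, Proposition~\ref{PropSecOrdClarke} and Corollary~\ref{CorClSecondOrder} is required.
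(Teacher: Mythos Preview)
Your proposal is correct and follows essentially the same route as the paper: the paper derives the singleton conclusion from Lemma~\ref{LemDirS_stat} and obtains \eqref{EqSecOrdSingleton} from Proposition~\ref{PropSecOrdClarke}(ii) combined with Theorem~\ref{ThBasicSecOrd} (exactly as you mention parenthetically), noting separately the empty second-order tangent set case. Your explicit check that directional non-degeneracy implies DirRCQ and your use of one-point sets in Corollary~\ref{CorClSecondOrder} amount to the same argument spelled out in slightly more detail.
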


The following result extends the second-order necessary optimality condition for convex set-constrained problems as in
 \cite[Proposition 3.46]{BS}  to allow the set $\Lambda$ to be  non-convex. Note that in  \cite[Proposition 3.46]{BS},  both Robinson's constraint qualification and the uniqueness of the multipliers are required while we derive our result under  {a} 
 nondegeneracy condition which is stronger than Robinson's CQ but guarantees the uniqueness of the multipliers.
\if{
 \begin{corollary}\label{coro-LICQ}
  Let $\bar{x}$ be a local optimal solution of problem (P) and assume that the generalized LICQ
  \begin{equation}\label{com-2-3}
  \nabla g(\bar{x})^T\lambda=0, \ \ \lambda\in {\rm span}N_{\Lambda}(g(\bar{x})) \Longrightarrow \lambda=0
  \end{equation}
  or equivalently the nondegeneracy condition
  \begin{equation} \label{nondeg}  \nabla g(\bar{x})\Re^n+\mathcal{L}(\widehat{T}_{\Lambda}(g(\bar{x})))=\Re^m
  \end{equation} holds. Then $ \Lambda^c(\bar x) = \Lambda(\bar x) =
  \Lambda^F(\bar x) =
  \{\lambda_0\}$ and
 for all $d\in C(\bar{x})$ there holds
  \begin{equation*}
  \nabla^2_{xx}L(\bar{x},{\lambda_0})(d,d)-\sigma_{T^2_{\Lambda}(g(\bar{x});\nabla g(\bar{x})d))}({\lambda_0})\geq 0.
  \end{equation*}
  \end{corollary}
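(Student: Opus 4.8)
The plan is to read this corollary off from Lemma~\ref{LemDirS_stat} and Corollary~\ref{CorSpanNormal}, exploiting the fact that the non-directional assumption stated here forces the directional non-degeneracy condition \eqref{EqNondegeneracy} in \emph{every} critical direction, the zero direction included.

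The first step would be to settle the multiplier-set equalities by taking $d=0$. Since $0\in C(\bar x)$ and $N_\Lambda(g(\bar x);0)=N_\Lambda(g(\bar x))$, the generalized LICQ in the statement is literally condition \eqref{EqNondegeneracy} in direction $d=0$; its equivalence with $\nabla g(\bar x)\Re^n+\mathcal{L}(\widehat{T}_\Lambda(g(\bar x)))=\Re^m$ is exactly the polarity computation carried out in the proof of Lemma~\ref{LemDirS_stat} specialized to $d=0$ (using $\widehat{T}_\Lambda(g(\bar x);0)=\widehat{T}_\Lambda(g(\bar x))$ and Proposition~\ref{polarity}), so it only needs to be quoted. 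Then Lemma~\ref{LemDirS_stat} with $d=0$ gives $\Lambda^s(\bar x;0)=\Lambda(\bar x;0)=\Lambda^c(\bar x;0)=\{\lambda_0\}$, and I would identify each of these directional sets at $d=0$ with its non-directional counterpart: $\Lambda(\bar x;0)=\Lambda(\bar x)$ and $\Lambda^c(\bar x;0)=\Lambda^c(\bar x)$ because $N_\Lambda(g(\bar x);0)=N_\Lambda(g(\bar x))$ and $N^c_\Lambda(g(\bar x);0)=N^c_\Lambda(g(\bar x))$, while $\Lambda^s(\bar x;0)=\Lambda^F(\bar x)$ (the regular multiplier set $\{\lambda\in\widehat N_\Lambda(g(\bar x))\mid \nabla_x L(\bar x,\lambda)=0\}$) because $\widehat N_{T_\Lambda(g(\bar x))}(0)=\big(T_\Lambda(g(\bar x))\big)^\circ=\widehat N_\Lambda(g(\bar x))$ (see, e.g., \cite[Theorem 6.28]{RW98}). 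This yields $\Lambda^F(\bar x)=\Lambda(\bar x)=\Lambda^c(\bar x)=\{\lambda_0\}$.

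The second step would be to treat an arbitrary critical direction $d\in C(\bar x)$. From $N_\Lambda(g(\bar x);\nabla g(\bar x)d)\subseteq N_\Lambda(g(\bar x))$ one gets ${\rm span}\,N_\Lambda(g(\bar x);\nabla g(\bar x)d)\subseteq{\rm span}\,N_\Lambda(g(\bar x))$, so the generalized LICQ forces condition \eqref{EqNondegeneracy} in direction $d$; hence Corollary~\ref{CorSpanNormal} applies and produces a singleton directional Clarke multiplier set $\Lambda^c(\bar x;d)=\{\tilde\lambda_d\}$ together with the second-order inequality \eqref{EqSecOrdSingleton} for $\tilde\lambda_d$. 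The key observation is that $\{\tilde\lambda_d\}=\Lambda(\bar x;d)\subseteq\Lambda(\bar x)=\{\lambda_0\}$, so $\tilde\lambda_d=\lambda_0$ independently of $d$. Substituting $\lambda_0$ into \eqref{EqSecOrdSingleton} gives, for every $d\in C(\bar x)$,
\[\nabla^2_{xx}L(\bar x,\lambda_0)(d,d)-\sigma_{T^2_\Lambda(g(\bar x);\nabla g(\bar x)d)}(\lambda_0)\geq 0,\]
which is the claimed inequality, holding trivially when $T^2_\Lambda(g(\bar x);\nabla g(\bar x)d)=\emptyset$ since then the left-hand side equals $+\infty$.

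No step requires genuine analysis; the whole argument is bookkeeping, and the two places that need a little care are the identification $\Lambda^s(\bar x;0)=\Lambda^F(\bar x)$, resting on the polarity $\widehat N_{T_\Lambda(g(\bar x))}(0)=\widehat N_\Lambda(g(\bar x))$, and the observation that the per-direction multiplier delivered by Corollary~\ref{CorSpanNormal} must, through the inclusion $\Lambda(\bar x;d)\subseteq\Lambda(\bar x)$, coincide with the single element $\lambda_0$ of $\Lambda(\bar x)$. It is precisely this last observation that upgrades the per-direction conclusion of Corollary~\ref{CorSpanNormal} to the uniform statement asserted here.
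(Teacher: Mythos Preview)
Your proposal is correct and follows essentially the same route as the paper: apply Lemma~\ref{LemDirS_stat} with $d=0$ to obtain the multiplier-set equalities (the paper's proof inlines part of that lemma's argument to show $\Lambda^F(\bar x)\neq\emptyset$ directly via \cite[Theorem 4]{GO16-1}, whereas you simply cite the lemma), and then invoke Corollary~\ref{CorSpanNormal} for each critical direction, identifying the per-direction multiplier with $\lambda_0$ through $\Lambda(\bar x;d)\subseteq\Lambda(\bar x)$. Your write-up is in fact a bit more explicit than the paper's about how the per-direction conclusion becomes uniform in $d$, and it matches almost verbatim the proof of the closely related Corollary~\ref{CorNondegen}.
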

 \begin{proof}
{We only need to show that $\Lambda^F(\bar{x})$ is nonempty, since the remaining result follows from Corollary \ref{thm-span-normal} by taking $d=0$.} The nondegeneracy condition (\ref{com-2-3}) is obviously stronger than the NNAMCQ (\ref{NNAMCQ}) and so the MSCQ  holds at $\bar x$.
 By taking polar in the both side of the  generalized LICQ (\ref{com-2-3}), we have
 \begin{equation*}
 \nabla g(\bar{x})\Re^n+({\rm span}N_{\Lambda}(g(\bar{x})))^\circ=\Re^m.
 \end{equation*}
Since
$${\rm span}N_{\Lambda}(g(\bar{x}))={\rm span}\cl\co N_{\Lambda}(g(\bar{x}))= {\rm span}N^c_{\Lambda}(g(\bar{x}))$$
and the Clarke normal cone is a closed convex cone,
we have
 \begin{eqnarray*}({\rm span}N_{\Lambda}(g(\bar{x})))^\circ  &=& ({\rm span}N^c_{\Lambda}(g(\bar x)))^\circ=(N_\Lambda ^c (g(\bar x)) -N_\Lambda ^c (g(\bar x))^\circ\\
 &=&N_\Lambda^c(g(\bar{x}))^\circ\cap -(N_\Lambda^c(g(\bar{x})))^\circ
 = \widehat{T}_\Lambda(g(\bar{x}))\cap (-\widehat{T}_\Lambda(g(\bar{x})))=\mathcal{L}(\widehat{T}_{\Lambda}(g(\bar{x}))),
 \end{eqnarray*}
 where the last equality follows from the fact that the regular tangent cone is a closed  convex cone.
This proves the equivalence between the generalized LICQ (\ref{com-2-3}) and the nondegeneracy condition (\ref{nondeg}).
 Note that $\mathcal{L}(\widehat{T}_{\Lambda}(g(\bar{x})))\subseteq \mathcal{L}(T_{\Lambda}(g(\bar{x})))$ since
 $T_\Lambda(g(\bar{x}))+\widehat{T}_\Lambda(g(\bar{x}))=T_\Lambda(g(\bar{x}))$ by Proposition \ref{Prop2.2}. Then
 (\ref{nondeg}) implies
 \[\nabla g(\bar{x})\Re^n+\mathcal{L}(T_{\Lambda}(g(\bar{x})))=\Re^m.\]
The above condition and the {MSCQ}  together ensures that  $\widehat{N}_{\mathcal{F}}(\bar{x})=\nabla g(\bar{x})^T\widehat{N}_{\Lambda}(g(\bar{x}))$ by \cite[Theorem 4]{GO16-1}.  Hence
$$0\in\nabla f(\bar x)+ \nabla g(\bar{x})^T\widehat{N}_{\Lambda}(g(\bar{x})).$$
which implies that the multiplier set $\Lambda^F(\bar x)$ is nonempty.
 \end{proof}
}\fi
{
\begin{corollary}\label{CorNondegen}
  Let $\bar{x}$ be a local optimal solution of problem (P) and assume that the non-degeneracy condition
  \begin{equation}\label{EqGenNondegen}
  \nabla g(\bar{x})^T\lambda=0, \ \lambda\in {\rm span\;}N_{\Lambda}(g(\bar{x}))\ \Longrightarrow\ \lambda=0
  \end{equation}
  holds. Then there is a unique multiplier $\lambda_0$ satisfying the first-order optimality conditions
  \begin{equation}\label{EqS_stat}\nabla_xL(\bar x,\lambda_0)=0, \ \lambda_0\in\widehat N_\Lambda(g(\bar x)).\end{equation}
  Further, for all $d\in C(\bar x)$ we have
  \[\nabla_{xx}^2L(\bar x,\lambda_0)-\sigma_{T^2_\Lambda(g(\bar x);\nabla g(\bar x)d)}(\lambda_0)\geq 0.\]
\end{corollary}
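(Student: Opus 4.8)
The plan is to reduce everything to the directional statements already proved — above all Corollary \ref{CorSpanNormal} (through Lemma \ref{LemDirS_stat}) — and then to show that the single directional strong multiplier produced there is the same for every critical direction, namely $\lambda_0$.

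First I would note that \eqref{EqGenNondegen} is nothing but the directional non-degeneracy condition \eqref{EqNondegeneracy} in the direction $d=0$, and that since $N_\Lambda(g(\bar x);\nabla g(\bar x)d)\subseteq N_\Lambda(g(\bar x))$ we have ${\rm span\,}N_\Lambda(g(\bar x);\nabla g(\bar x)d)\subseteq{\rm span\,}N_\Lambda(g(\bar x))$; hence \eqref{EqGenNondegen} forces \eqref{EqNondegeneracy} to hold for every direction $d$ with $\nabla g(\bar x)d\in T_\Lambda(g(\bar x))$, in particular for $d=0$ and for every $d\in C(\bar x)$. Applying Corollary \ref{CorSpanNormal} in the direction $d=0$ (which lies in $C(\bar x)$) gives that $\Lambda^s(\bar x;0)=\Lambda(\bar x;0)=\Lambda^c(\bar x;0)$ is a singleton, say $\{\lambda_0\}$. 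Since $\widehat N_{T_\Lambda(g(\bar x))}(0)=T_\Lambda(g(\bar x))^\circ=\widehat N_\Lambda(g(\bar x))$ by the tangent--normal polarity, the very definition of $\Lambda^s(\bar x;0)$ shows that $\lambda_0$ is precisely the unique multiplier satisfying \eqref{EqS_stat}; this settles the first assertion.

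Now fix $d\in C(\bar x)$. A second application of Corollary \ref{CorSpanNormal}, this time in the direction $d$, produces a unique $\lambda_0^d$ with $\Lambda^s(\bar x;d)=\{\lambda_0^d\}$ and $\nabla^2_{xx}L(\bar x,\lambda_0^d)(d,d)-\sigma_{T^2_\Lambda(g(\bar x);\nabla g(\bar x)d)}(\lambda_0^d)\ge 0$. The heart of the argument is to verify $\lambda_0^d=\lambda_0$. Since \eqref{EqNondegeneracy} in the direction $d$ implies FOSCMS in that direction, Proposition \ref{FOSCMS} shows that $M(x)=g(x)-\Lambda$ is metrically subregular at $(\bar x,0)$ in direction $d$; Lemma \ref{Lem3.1} then gives $\nabla f(\bar x)d=0$, whence $\langle\lambda_0,\nabla g(\bar x)d\rangle=\langle\nabla g(\bar x)^T\lambda_0,d\rangle=-\nabla f(\bar x)d=0$. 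Because $\lambda_0\in\widehat N_\Lambda(g(\bar x))=T_\Lambda(g(\bar x))^\circ$ and $\nabla g(\bar x)d\in T_\Lambda(g(\bar x))$, for every $w\in T_\Lambda(g(\bar x))$ we get $\langle\lambda_0,w-\nabla g(\bar x)d\rangle=\langle\lambda_0,w\rangle\le 0$, which is exactly $\lambda_0\in\widehat N_{T_\Lambda(g(\bar x))}(\nabla g(\bar x)d)$; combined with $\nabla_xL(\bar x,\lambda_0)=0$ this yields $\lambda_0\in\Lambda^s(\bar x;d)=\{\lambda_0^d\}$, so $\lambda_0^d=\lambda_0$. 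Substituting this back into the second-order inequality and letting $d$ range over $C(\bar x)$ gives the conclusion.

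I expect the main obstacle to be exactly this direction-independence of the multiplier: both $\Lambda^s(\bar x;d)$ and the Fr\'echet normal cone $\widehat N_{T_\Lambda(g(\bar x))}(\cdot)$ are genuinely direction-sensitive, and the identity that unlocks the argument is $\nabla f(\bar x)d=0$ on critical directions, which is available only because non-degeneracy entails directional metric subregularity and hence Lemma \ref{Lem3.1} applies. Everything else is routine bookkeeping of the inclusions between the directional normal cones already recorded in the paper.
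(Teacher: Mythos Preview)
Your proof is correct and follows the same overall architecture as the paper: apply Lemma \ref{LemDirS_stat}/Corollary \ref{CorSpanNormal} first at $d=0$ to produce the unique $\lambda_0$ satisfying \eqref{EqS_stat}, then at an arbitrary $d\in C(\bar x)$, and finally argue that the two singletons coincide. The only notable difference lies in this last step. The paper identifies the multipliers via the chain of inclusions
\[
\emptyset\neq\Lambda(\bar x;d)\subseteq\Lambda(\bar x)=\{\lambda_0\},
\]
which is immediate from $N_\Lambda(g(\bar x);\nabla g(\bar x)d)\subseteq N_\Lambda(g(\bar x))$ together with the non-degeneracy condition \eqref{EqGenNondegen}; no use of Lemma \ref{Lem3.1} or of the Fr\'echet normal cone to $T_\Lambda(g(\bar x))$ is needed. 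Your route instead shows directly that $\lambda_0\in\Lambda^s(\bar x;d)$ by checking $\lambda_0\in\widehat N_{T_\Lambda(g(\bar x))}(\nabla g(\bar x)d)$, which in turn requires $\nabla f(\bar x)d=0$ and hence the detour through directional metric subregularity and Lemma \ref{Lem3.1}. Both arguments are valid; the paper's is shorter, while yours has the mild advantage of staying entirely within the strong-multiplier framework and making the role of $\widehat N_{T_\Lambda(g(\bar x))}(\cdot)$ explicit.
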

\begin{proof}The existence and uniqueness of the multiplier $\lambda_0$ fulfilling \eqref{EqS_stat} follows from Lemma \ref{LemDirS_stat} applied with $d=0$. Further,
\[\{\lambda_0\}=\{\lambda\in\widehat N_\Lambda(g(\bar x))\mid \nabla_x L(\bar x,\lambda)=0\}\subseteq \{\lambda\in N_\Lambda(g(\bar x))\mid \nabla_x L(\bar x,\lambda)=0\}=\Lambda(\bar x)\]
and the non-degeneracy condition \eqref{EqGenNondegen} ensures that $\Lambda(\bar x)=\{\lambda_0\}$. Further note that for every $d\in C(\bar x)$ the condition \eqref{EqGenNondegen} implies \eqref{EqNondegeneracy} and $\emptyset\not=\Lambda(\bar x;d)\subseteq\Lambda(\bar x)=\{\lambda_0\}$. This shows $\Lambda(\bar x;d)=\{\lambda_0\}$ and the second statement follows from Corollary \ref{CorSpanNormal}.
\end{proof}
\begin{remark}
  According to \cite{FleKanOut07}, the first-order optimality conditions  \eqref{EqS_stat} are called {\em S-stationarity conditions}.
\end{remark}
}
\section{Second-order sufficient conditions}
We now consider sufficient conditions for optimality.  We need the following definition of an upper second order approximation set of $\Lambda$ which is a special case of the definition given in \cite[Definition 3.82]{BS}.
   \begin{definition}  Let $\bar x $ be a feasible solution of problem (P) and  $d\in C(\bar x )$.  We say that a closed set
       $\mathcal{A}(d)$ is an {\em upper second-order approximation set} for $\Lambda$ at $g(\bar x )$ in direction $\nabla g(\bar x )d\in T_\Lambda (g(\bar x ))$, if for any sequence
       $y_n \in \Lambda$ of the form $y_n:=g(\bar x )+t_n \nabla g(\bar x )d+\frac{1}{2}t_n^2(\nabla g(\bar x )w_n+a_n)$, where $t_n\downarrow 0$
       and  $\{a_n\}$ being a convergent sequence  and $\{w_n\}$ satisfying $t_nw_n\to 0$, the following condition holds
       \begin{equation*}
       \lim\limits_{n\to \infty}{\rm dist}(\nabla g(\bar x )w_n+a_n,\mathcal{A}(d))=0.
       \end{equation*}
       \end{definition}
\if{
       By definition $ T^2_\Lambda (g(\bar x );  \nabla g(\bar x )d)\subseteq  \mathcal{A}(d) $.
   Although the set $\Lambda$ may be  non-convex, when the set of multipliers $ \Lambda^F({\bar x }):=\{\lambda| 0=\nabla_x L(\bar x,\lambda), \lambda \in \widehat N_\Lambda (g(\bar x))\}$ is nonempty, we still have the expression for the critical cone $C(\bar x )=\big\{d|\, \nabla f(\bar x )d= 0, \nabla g(\bar x )d\in T_{\Lambda}(g(\bar x )) \big\}$. Using this fact, it is easy to check that  \cite[Theorem 3.83]{BS} still hold for a  non-convex set $\Lambda$.
 \begin{theorem}\label{Thm5.2}  Let $\bar x $ be a feasible point of (P). 
 Assume that   every $d\in C(\bar x )$ corresponds to    $\mathcal{A}(d)$, an upper second-order approximation set for $\Lambda$ at $g(\bar x )$ in direction $d$.  Further assume that for every  $d\in C(\bar x ) \setminus \{0\}$ {there is some $\lambda$ satisfying \eqref{EqS_stat} and}
\begin{equation*}
 \nabla^2_{xx}L(\bar x ,\lambda)(d,d) - 
{\sigma_{\mathcal{A}(d)}(\lambda)} > 0.
 \end{equation*} Then the second order growth condition holds at $\bar x $, i.e., there exists {a neighborhood $U$ of $\bar x$ and} $\delta>0$ such that
 $$f(x) \geq f(\bar x )+\delta \|x-\bar x \|^2 \quad  \forall x {\in U} \mbox{ s.t. } g(x)\in \Lambda.$$
 \end{theorem}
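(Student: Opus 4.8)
The plan is to argue by contradiction along the classical lines of \cite[Theorem 3.83]{BS}, but working throughout with the (possibly non-convex) set $\Lambda$, the regular normal cone $\widehat N_\Lambda(g(\bar x))$ and the approximation set $\mathcal{A}(d)$. Suppose the quadratic growth condition fails at $\bar x$. Then for each $n$ we may choose a feasible point $x_n$ with $0<\norm{x_n-\bar x}<1/n$ and $f(x_n)<f(\bar x)+\frac{1}{n}\norm{x_n-\bar x}^2$. Set $t_n:=\norm{x_n-\bar x}\downarrow 0$ and $d_n:=(x_n-\bar x)/t_n$, so $\norm{d_n}=1$; passing to a subsequence we may assume $d_n\to d$ with $\norm{d}=1$.

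First I would show that $d$ is a critical direction. From $g(x_n)\in\Lambda$ and the first-order expansion $g(x_n)=g(\bar x)+t_n\nabla g(\bar x)d+o(t_n)$ (using $d_n\to d$) we get $\dist(g(\bar x)+t_n\nabla g(\bar x)d,\Lambda)=o(t_n)$, hence $\nabla g(\bar x)d\in T_\Lambda(g(\bar x))$ by \eqref{dist1}; and from $f(x_n)-f(\bar x)=t_n\nabla f(\bar x)d_n+o(t_n)<\frac{1}{n}t_n^2$ we obtain $\nabla f(\bar x)d\le 0$ in the limit. Thus $d\in C(\bar x)\setminus\{0\}$, and the hypothesis yields a multiplier $\lambda$ with $\nabla_x L(\bar x,\lambda)=\nabla f(\bar x)+\nabla g(\bar x)^T\lambda=0$, $\lambda\in\widehat N_\Lambda(g(\bar x))=T_\Lambda(g(\bar x))^\circ$, and $c:=\nabla^2_{xx}L(\bar x,\lambda)(d,d)-\sigma_{\mathcal{A}(d)}(\lambda)>0$; in particular $\sigma_{\mathcal{A}(d)}(\lambda)<+\infty$. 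Since $\nabla g(\bar x)d\in T_\Lambda(g(\bar x))$ and $\lambda\in T_\Lambda(g(\bar x))^\circ$ we have $\langle\lambda,\nabla g(\bar x)d\rangle\le 0$, whereas $\langle\lambda,\nabla g(\bar x)d\rangle=\langle\nabla g(\bar x)^T\lambda,d\rangle=-\nabla f(\bar x)d\ge 0$; hence $\nabla f(\bar x)d=0$ and $\langle\lambda,\nabla g(\bar x)d\rangle=0$.

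Next I would recast the feasible points in the form appearing in the definition of $\mathcal{A}(d)$. Write $x_n=\bar x+t_n d+\frac{1}{2}t_n^2 w_n$ with $w_n:=2(d_n-d)/t_n$, so $t_n w_n=2(d_n-d)\to 0$. A second-order Taylor expansion of $g$ at $\bar x$, in which all $w_n$-dependent higher-order terms are absorbed via $t_n^2 w_n=o(t_n)$ and $t_n^3 w_n=o(t_n^2)$, gives $g(x_n)=g(\bar x)+t_n\nabla g(\bar x)d+\frac{1}{2}t_n^2\bigl(\nabla g(\bar x)w_n+a_n\bigr)$ with $a_n:=\nabla^2 g(\bar x)(d,d)+o(1)$ a convergent sequence. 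Since $y_n:=g(x_n)\in\Lambda$ is of the required form and $\mathcal{A}(d)$ is an upper second-order approximation set, $\dist(\nabla g(\bar x)w_n+a_n,\mathcal{A}(d))\to 0$; in particular $\mathcal{A}(d)\neq\emptyset$, so we may pick $z_n\in\mathcal{A}(d)$ with $\norm{\nabla g(\bar x)w_n+a_n-z_n}\to 0$ and deduce $\langle\lambda,\nabla g(\bar x)w_n+a_n\rangle\le\langle\lambda,z_n\rangle+\norm{\lambda}\,\norm{\nabla g(\bar x)w_n+a_n-z_n}\le\sigma_{\mathcal{A}(d)}(\lambda)+o(1)$.

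Finally I would combine these estimates through a second-order expansion of the Lagrangian. Using $\nabla_x L(\bar x,\lambda)=0$ and $d_n\to d$ gives $L(x_n,\lambda)-L(\bar x,\lambda)=\frac{1}{2}t_n^2\nabla^2_{xx}L(\bar x,\lambda)(d,d)+o(t_n^2)$, while directly $L(x_n,\lambda)-L(\bar x,\lambda)=f(x_n)-f(\bar x)+\langle\lambda,g(x_n)-g(\bar x)\rangle$, and by the expansion of $g$ together with $\langle\lambda,\nabla g(\bar x)d\rangle=0$ we have $\langle\lambda,g(x_n)-g(\bar x)\rangle=\frac{1}{2}t_n^2\langle\lambda,\nabla g(\bar x)w_n+a_n\rangle\le\frac{1}{2}t_n^2\sigma_{\mathcal{A}(d)}(\lambda)+o(t_n^2)$. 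Subtracting yields $f(x_n)-f(\bar x)\ge\frac{1}{2}t_n^2\bigl(\nabla^2_{xx}L(\bar x,\lambda)(d,d)-\sigma_{\mathcal{A}(d)}(\lambda)\bigr)+o(t_n^2)=\frac{1}{2}c\,t_n^2+o(t_n^2)$, which for all large $n$ exceeds $\frac{1}{n}t_n^2=\frac{1}{n}\norm{x_n-\bar x}^2$, contradicting the choice of $x_n$; this proves the quadratic growth condition. I expect the main obstacle to be the term $\langle\lambda,\nabla g(\bar x)w_n+a_n\rangle$: the sequence $\{w_n\}$ need not be bounded, so one cannot simply take limits, and it is precisely the defining property of the upper second-order approximation set — together with the finiteness of $\sigma_{\mathcal{A}(d)}(\lambda)$ guaranteed by the strict inequality — that lets us replace $\nabla g(\bar x)w_n+a_n$ by a nearby element of $\mathcal{A}(d)$ and bound it by the support function. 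The remaining work is routine bookkeeping of Taylor remainders, all of order $o(t_n^2)$ once $t_n w_n\to 0$ is used.
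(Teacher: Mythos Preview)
Your proof is correct and follows exactly the classical contradiction argument of \cite[Theorem 3.83]{BS} that the paper itself invokes; the paper does not spell out a proof but simply remarks that, since the existence of a multiplier in $\widehat N_\Lambda(g(\bar x))$ forces $\nabla f(\bar x)d=0$ for every $d\in C(\bar x)$, the argument of \cite[Theorem 3.83]{BS} carries over verbatim to the non-convex setting. Your write-up makes precisely this step explicit (via the polarity $\widehat N_\Lambda(g(\bar x))=T_\Lambda(g(\bar x))^\circ$) and handles the remaining Taylor bookkeeping and the use of the upper second-order approximation set in the standard way.
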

}\fi
Consider the so-called {\em generalized Lagrangian} $L^g:\mathbb{R}^n\times\mathbb{R}\times\mathbb{R}^m\to\mathbb{R}$ defined by
\[L^g(x,\alpha,\lambda)=\alpha f(x)+g(x)^T\lambda.\]
It is easy to check that the following variant of \cite[Theorem 3.83]{BS}  holds for a  non-convex set $\Lambda$.
 \begin{theorem}\label{Thm5.2}  Let $\bar x $ be a feasible point of (P).
 Assume that   every $d\in C(\bar x )$ corresponds to    $\mathcal{A}(d)$, an upper second-order approximation set for $\Lambda$ at $g(\bar x )$ in direction $d$.  Further assume that for every  $d\in C(\bar x ) \backslash \{0\}$  there is some $(\alpha,\lambda)\in\mathbb{R}\times\mathbb{R}^m$ satisfying
 \begin{equation}\label{EQAlpha}\alpha\geq 0,\ \alpha\nabla f(\bar x)d=0,\ \nabla_x L^g(\bar x,\alpha,\lambda)=0\end{equation}
  and
\begin{equation}\label{EqSecOrderSuff}
 \nabla^2_{xx}L^g(\bar x ,\alpha,\lambda)(d,d) - \sigma_{\mathcal{A}(d)}(\lambda) > 0.
 \end{equation}
 Then the second order growth condition holds at $\bar x $, i.e., there exists a neighborhood $U$ of $\bar x$ and $\delta>0$ such that
 $$f(x) \geq f(\bar x )+\delta \|x-\bar x \|^2 \quad  \forall x {\in U} \mbox{ s.t. } g(x)\in \Lambda.$$
 \end{theorem}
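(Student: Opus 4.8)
The plan is to argue by contradiction, following the classical scheme of \cite[Theorem 3.83]{BS} but replacing the convex-analytic ingredients by the upper second-order approximation set. Suppose the second-order growth condition fails. Then there is a sequence $x_k \to \bar x$ with $g(x_k) \in \Lambda$, $x_k \neq \bar x$, and $f(x_k) \leq f(\bar x) + o(\|x_k - \bar x\|^2)$. Write $t_k := \|x_k - \bar x\| \downarrow 0$ and $d_k := (x_k - \bar x)/t_k \in \mathbb{S}$; after passing to a subsequence we may assume $d_k \to d$ for some $d \in \mathbb{S}$. First I would show $d \in C(\bar x)$: from $g(x_k) \in \Lambda$ a first-order Taylor expansion gives $g(\bar x) + t_k \nabla g(\bar x) d_k + o(t_k) \in \Lambda$, so $\nabla g(\bar x) d \in T_\Lambda(g(\bar x))$; and from $f(x_k) \leq f(\bar x) + o(t_k^2)$ a first-order expansion of $f$ gives $\nabla f(\bar x) d \leq 0$. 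Since $d \neq 0$ we obtain $d \in C(\bar x) \setminus \{0\}$, so there is a pair $(\alpha,\lambda)$ as in \eqref{EQAlpha}–\eqref{EqSecOrderSuff}.

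Next I would extract the second-order information. Set $a_k := \nabla^2 g(\bar x)(d_k, d_k)$ and write the expansion
\[g(x_k) = g(\bar x) + t_k \nabla g(\bar x) d_k + \tfrac12 t_k^2\big(\nabla g(\bar x) w_k + a_k\big) + o(t_k^2),\]
where I would like to absorb the curvature term $\nabla^2 g(\bar x)(d_k,d_k)$ and a suitable $w_k$; the natural choice is to take $w_k$ such that $x_k = \bar x + t_k d + \tfrac12 t_k^2 w_k$ (so $w_k = 2(d_k - d)/t_k$, which need not be bounded but satisfies $t_k w_k \to 0$), and then one reorganizes the Taylor remainder so that the bracketed quantity is $\nabla g(\bar x) w_k + a_k'$ with $\{a_k'\}$ convergent. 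Since $g(x_k) \in \Lambda$ this puts us exactly in the situation covered by the definition of the upper second-order approximation set $\mathcal{A}(d)$, hence ${\rm dist}(\nabla g(\bar x) w_k + a_k', \mathcal{A}(d)) \to 0$; equivalently, there are $b_k \in \mathcal{A}(d)$ with $\nabla g(\bar x) w_k + a_k' - b_k \to 0$. Now I use $\lambda$: because $\nabla_x L^g(\bar x, \alpha, \lambda) = 0$, i.e. $\alpha \nabla f(\bar x) + \nabla g(\bar x)^T \lambda = 0$, a second-order Taylor expansion of $L^g(\cdot,\alpha,\lambda)$ along $x_k = \bar x + t_k d + \tfrac12 t_k^2 w_k$ yields
\[\alpha f(x_k) + \lambda^T g(x_k) = \alpha f(\bar x) + \lambda^T g(\bar x) + t_k\,\nabla_x L^g(\bar x,\alpha,\lambda) d + \tfrac12 t_k^2\Big(\nabla^2_{xx}L^g(\bar x,\alpha,\lambda)(d,d) + \lambda^T(\nabla g(\bar x) w_k + a_k')\Big) + o(t_k^2),\]
where the linear term vanishes and I have used $t_k w_k \to 0$ to discard the mixed contribution of $w_k$ at first order while keeping $\lambda^T \nabla g(\bar x) w_k$ inside the bracket paired with the curvature.

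Finally I assemble the contradiction. On one hand, $\alpha f(x_k) \leq \alpha f(\bar x) + o(t_k^2)$ by $\alpha \geq 0$ and the growth-violation hypothesis, and $\lambda^T g(x_k) \leq \lambda^T g(\bar x) + o(t_k^2)$ requires care — this is where I would use that $\lambda$ is a multiplier in the sense appropriate to $\mathcal{A}(d)$, namely $\sigma_{\mathcal{A}(d)}(\lambda) < \infty$ enters via $\lambda^T b_k \leq \sigma_{\mathcal{A}(d)}(\lambda)$, so that $\lambda^T(\nabla g(\bar x) w_k + a_k') = \lambda^T b_k + \lambda^T(\nabla g(\bar x)w_k + a_k' - b_k) \leq \sigma_{\mathcal{A}(d)}(\lambda) + o(1)$. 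Combining with the Taylor expansion above,
\[0 \geq \limsup_{k\to\infty} \frac{\alpha f(x_k) + \lambda^T g(x_k) - \alpha f(\bar x) - \lambda^T g(\bar x)}{\tfrac12 t_k^2} \geq \nabla^2_{xx}L^g(\bar x,\alpha,\lambda)(d,d) - \sigma_{\mathcal{A}(d)}(\lambda),\]
which contradicts \eqref{EqSecOrderSuff}. Hence the second-order growth condition holds. The main obstacle I anticipate is the bookkeeping in the second-order expansion: one must split $d_k = d + (d_k - d)$ carefully so that the unbounded part $w_k$ of the displacement contributes only through the term $\lambda^T \nabla g(\bar x) w_k$ that is controlled by the approximation-set bound, while all genuinely second-order curvature is captured by $\nabla^2 g(\bar x)(d,d)$ and $\nabla^2 f(\bar x)(d,d)$; getting the "$o(t_k^2)$" remainders and the convergence of $\{a_k'\}$ exactly right, and verifying that the chosen $w_k$ indeed satisfies $t_k w_k \to 0$, is the delicate part. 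The rest is routine once the framework of \cite[Theorem 3.83]{BS} is in place, the only genuine change being that convexity of $\Lambda$ is used there solely to guarantee the existence of an upper second-order approximation set, which is now an explicit hypothesis.
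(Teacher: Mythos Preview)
Your approach is exactly what the paper intends: it gives no proof of its own, stating only that ``it is easy to check that the following variant of \cite[Theorem 3.83]{BS} holds for a non-convex set $\Lambda$,'' and your contradiction argument via a limiting critical direction and the upper second-order approximation set is precisely that variant.

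That said, your displayed Taylor expansion of $L^g$ double-counts the curvature of $g$, and the final inequality chain is mis-stated. Writing $x_k=\bar x+t_kd+\tfrac12 t_k^2w_k$ and defining $a_k'$ so that $g(x_k)=g(\bar x)+t_k\nabla g(\bar x)d+\tfrac12 t_k^2(\nabla g(\bar x)w_k+a_k')$ exactly (whence $a_k'\to\nabla^2 g(\bar x)(d,d)$), the correct second-order term in the expansion of $L^g(x_k,\alpha,\lambda)$ is
\[
\alpha\nabla f(\bar x)w_k+\alpha\nabla^2 f(\bar x)(d,d)+\lambda^T(\nabla g(\bar x)w_k+a_k'),
\]
not $\nabla^2_{xx}L^g(\bar x,\alpha,\lambda)(d,d)+\lambda^T(\nabla g(\bar x)w_k+a_k')$; the latter counts $\lambda^T\nabla^2 g(\bar x)(d,d)$ twice and drops $\alpha\nabla f(\bar x)w_k$. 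Since $(\alpha\nabla f(\bar x)+\lambda^T\nabla g(\bar x))w_k=0$ by stationarity, the bracket actually reduces to $\nabla^2_{xx}L^g(\bar x,\alpha,\lambda)(d,d)+o(1)$, and together with $\alpha\nabla f(\bar x)d=0$ (hence $\lambda^T\nabla g(\bar x)d=0$) this gives
\[
\frac{L^g(x_k,\alpha,\lambda)-L^g(\bar x,\alpha,\lambda)}{\tfrac12 t_k^2}\to \nabla^2_{xx}L^g(\bar x,\alpha,\lambda)(d,d).
\]
The upper bound does not come out as ``$0\geq\ldots$''; rather, combining $\alpha(f(x_k)-f(\bar x))\leq o(t_k^2)$ with $\lambda^T(g(x_k)-g(\bar x))=\tfrac12 t_k^2\lambda^T(\nabla g(\bar x)w_k+a_k')\leq \tfrac12 t_k^2(\sigma_{\mathcal{A}(d)}(\lambda)+o(1))$ yields
\[
\limsup_{k\to\infty}\frac{L^g(x_k,\alpha,\lambda)-L^g(\bar x,\alpha,\lambda)}{\tfrac12 t_k^2}\leq \sigma_{\mathcal{A}(d)}(\lambda),
\]
and comparing with the preceding limit gives $\nabla^2_{xx}L^g(\bar x,\alpha,\lambda)(d,d)\leq\sigma_{\mathcal{A}(d)}(\lambda)$, the desired contradiction. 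With this correction your plan is complete and coincides with the argument in \cite[Theorem 3.83]{BS}.
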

 The second-order condition \eqref{EqSecOrderSuff} has the following two implications. Firstly, {if ${\cal A}(d)\not=\emptyset$} it is easy to see that $(\alpha,\lambda)\not=(0,0)$. Secondly, we have $\lambda\in N^c_\Lambda(g(\bar x);\nabla g(\bar x)d)$ whenever $ T^2_\Lambda (g(\bar x );  \nabla g(\bar x )d)\not=\emptyset$. Indeed, by the definition we have $\emptyset\not= T^2_\Lambda (g(\bar x );  \nabla g(\bar x )d)\subseteq  \mathcal{A}(d) $ for every upper second-order approximation set and hence $\lambda\in \big(\widehat T_\Lambda(g(\bar x);\nabla g(\bar x)d)\big)^\circ=
N^c_\Lambda(g(\bar x);\nabla g(\bar x)d)$ by virtue of Propositions \ref{PropRegTanCone} and \ref{polarity}, since otherwise $$\sigma_{\mathcal{A}(d)}(\lambda)\geq \sigma_{T^2_\Lambda (g(\bar x );  \nabla g(\bar x )d)}(\lambda)=\infty.$$

 In general $ T^2_{\Lambda}(g(\bar x );
 \nabla g(\bar x )d)$ may not be an upper second-order approximation set for $\Lambda$ at $g(\bar x )$ in direction $\nabla g(\bar x )d$. But if   $ T^2_{\Lambda}(g(\bar x );
 \nabla g(\bar x )d)$ is an upper second-order approximation set for $\Lambda$ at $g(\bar x )$ in direction $\nabla g(\bar x )d$, then we say that $\Lambda$ is outer second-order regular at $g(\bar x )$ in direction $\nabla g(\bar x )d$; see \cite[Definition 3.85]{BS}.

 \if{Combining Theorem \ref{Thm5.2} and Corollary \ref{CorNondegen}, we obtain immediately the following ``no-gap'' necessary and sufficient optimality conditions under the outer second order regularity of $\Lambda$ and nondegeneracy condition. Thus \cite[Theorem 3.86]{BS} is extended to the non-convex set $\Lambda$.
 \begin{theorem} Let $\bar x $ be a feasible solution of (P) and $\Lambda$ is outer second order regular at $g(\bar x )$ in direction $\nabla g(\bar x )d$ for every  $d\in C(\bar x )$. If 
 {for every $d\in C(\bar x)$ there exists $\lambda$ fulfilling \eqref{EqS_stat} and}
$$  \nabla^2_{xx}L(\bar x ,\lambda)(d,d) - 
{\sigma_{ T^2_{\Lambda}  (g(\bar x );\nabla g(\bar x )d)}}(\lambda)>0,$$
 then the second order growth condition holds at point $\bar x $. If, in addition, the nondegeneracy condition  \eqref{EqNondegeneracy} holds at $\bar x $ and $ T^2_{\Lambda}(g(\bar x );
 \nabla g(\bar x )d)$ is convex for any $ d\in C(\bar x )$, then the second order conditions
 $$  \nabla^2_{xx}L(\bar x ,\lambda_0)(d,d) - 
 {\sigma_{T^2_{\Lambda} (g(\bar x );\nabla g(\bar x )d)}(\lambda_0)}>0, \forall d \in C(\bar x ) \setminus \{0\}$$
 are necessary and sufficient for the second order growth condition at the point $\bar x $, {where $\lambda_0$ denotes the unique multiplier fulfilling \eqref{EqS_stat}.}
 \end{theorem}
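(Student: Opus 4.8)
The plan is to obtain the two directions from machinery already in place: the sufficiency from the sufficient optimality Theorem \ref{Thm5.2}, and the necessity from the nondegeneracy corollary \ref{CorNondegen} applied to a quadratically perturbed objective. Throughout, the outer second-order regularity of $\Lambda$ lets me take the upper second-order approximation set to be $\mathcal{A}(d)=T^2_{\Lambda}(g(\bar x);\nabla g(\bar x)d)$ itself (it is closed, being an outer limit), so that the support function $\sigma_{\mathcal{A}(d)}$ appearing in \eqref{EqSecOrderSuff} is exactly the $\sigma_{T^2_{\Lambda}(g(\bar x);\nabla g(\bar x)d)}$ occurring in both the necessary and the sufficient conditions; this is what makes the two conditions match with no gap.

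For the first (sufficiency) assertion I would first record the elementary fact that for a critical direction $d\in C(\bar x)$ and any multiplier $\lambda$ satisfying the S-stationarity conditions \eqref{EqS_stat}, one automatically has $\nabla f(\bar x)d=0$. Indeed, $\nabla_x L(\bar x,\lambda)=0$ gives $\nabla f(\bar x)d=-\lambda^T\nabla g(\bar x)d$, and since $\lambda\in\widehat N_\Lambda(g(\bar x))=T_\Lambda(g(\bar x))^\circ$ and $\nabla g(\bar x)d\in T_\Lambda(g(\bar x))$ we get $\lambda^T\nabla g(\bar x)d\le 0$, hence $\nabla f(\bar x)d\ge 0$; combined with $\nabla f(\bar x)d\le 0$ from the definition of $C(\bar x)$ this forces $\nabla f(\bar x)d=0$. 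With this in hand, for each $d\in C(\bar x)\setminus\{0\}$ I take the hypothesized $\lambda$ together with $\alpha=1$: then $\alpha\ge 0$, $\alpha\nabla f(\bar x)d=0$ and $\nabla_x L^g(\bar x,1,\lambda)=\nabla_x L(\bar x,\lambda)=0$, so \eqref{EQAlpha} holds, while \eqref{EqSecOrderSuff} is precisely the assumed strict inequality because $\nabla^2_{xx}L^g(\bar x,1,\lambda)=\nabla^2_{xx}L(\bar x,\lambda)$ and $\sigma_{\mathcal{A}(d)}=\sigma_{T^2_{\Lambda}(g(\bar x);\nabla g(\bar x)d)}$. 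Theorem \ref{Thm5.2} then yields the second-order growth condition.

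The second (no-gap) assertion combines this with a necessity argument. Under the nondegeneracy condition \eqref{EqGenNondegen} (equivalently \eqref{EqNondegeneracy} at $d=0$), Corollary \ref{CorNondegen} provides the unique S-stationary multiplier $\lambda_0$. Sufficiency of the strict conditions is then immediate: if the displayed strict inequality holds with $\lambda=\lambda_0$ for every $d\in C(\bar x)\setminus\{0\}$, the first assertion applied with this single $\lambda_0$ gives second-order growth. The converse --- that second-order growth forces the strict inequality --- is the step I expect to be the crux. My plan is a perturbation trick: if $f(x)\ge f(\bar x)+\delta\norm{x-\bar x}^2$ on feasible $x$ in a neighborhood of $\bar x$, then $\bar x$ is a local minimizer of $\tilde f(x):=f(x)-\delta\norm{x-\bar x}^2$ over the feasible set. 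Since $\nabla\tilde f(\bar x)=\nabla f(\bar x)$, the critical cone, the S-stationarity system and hence the unique multiplier are unchanged, so Corollary \ref{CorNondegen} applied to $\tilde f$ yields, for all $d\in C(\bar x)$, the inequality $\nabla^2_{xx}\tilde L(\bar x,\lambda_0)(d,d)-\sigma_{T^2_{\Lambda}(g(\bar x);\nabla g(\bar x)d)}(\lambda_0)\ge 0$. Because $\nabla^2_{xx}\tilde L(\bar x,\lambda_0)=\nabla^2_{xx}L(\bar x,\lambda_0)-2\delta I$, this reads $\nabla^2_{xx}L(\bar x,\lambda_0)(d,d)-\sigma_{T^2_{\Lambda}(g(\bar x);\nabla g(\bar x)d)}(\lambda_0)\ge 2\delta\norm{d}^2$, which is strictly positive for $d\neq 0$, exactly the asserted necessary condition.

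Two points deserve care. First, the empty case $T^2_{\Lambda}(g(\bar x);\nabla g(\bar x)d)=\emptyset$ must be handled by the convention $\sigma_{T^2_{\Lambda}(g(\bar x);\nabla g(\bar x)d)}(\lambda_0)=-\infty$, under which the strict inequality is automatic; this is consistent with the way \eqref{EqSecOrdSingleton} and Corollary \ref{CorNondegen} already treat emptiness. Second, the convexity of $T^2_{\Lambda}(g(\bar x);\nabla g(\bar x)d)$ assumed in the second assertion is what aligns the present statement with the classical \cite[Theorem 3.86]{BS}, while the essential driving ingredients in my argument are the outer second-order regularity and the nondegeneracy, which together produce the single multiplier $\lambda_0$ and the matching support-function terms on the two sides. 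I would conclude by noting that since both directions turn on the same quantity $\nabla^2_{xx}L(\bar x,\lambda_0)(d,d)-\sigma_{T^2_{\Lambda}(g(\bar x);\nabla g(\bar x)d)}(\lambda_0)$, necessity and sufficiency close without a gap.
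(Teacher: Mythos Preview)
Your proposal is correct and follows essentially the same approach as the paper, which states that the result is obtained ``immediately'' by combining Theorem~\ref{Thm5.2} and Corollary~\ref{CorNondegen}; you carry this out explicitly, taking $\alpha=1$ in Theorem~\ref{Thm5.2} for sufficiency and applying Corollary~\ref{CorNondegen} to the perturbed objective $f(x)-\delta\norm{x-\bar x}^2$ for necessity, which is exactly the perturbation argument the paper also uses in the proof of the subsequent (non-commented) no-gap theorem. Your side observation that convexity of $T^2_\Lambda(g(\bar x);\nabla g(\bar x)d)$ is not actually used in your argument is valid, since Corollary~\ref{CorNondegen} already delivers the ordinary support function $\sigma$ rather than $\hat\sigma$.
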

 }\fi
Combining theorems  \ref{second-order-theorem-1} and \ref{Thm5.2}, we obtain immediately the following ``no-gap'' necessary and sufficient optimality conditions under the outer second order regularity of $\Lambda$. Thus \cite[Theorem 3.86]{BS} is extended to the non-convex set $\Lambda$ and Robinson' constraint qualification is weakened to directional metric subregularity.
 \begin{theorem} Let $\bar x $ be a feasible solution of (P) and assume that $\Lambda$ is outer second-order regular at $g(\bar x )$ in direction $\nabla g(\bar x )d$ for every  $d\in C(\bar x )\backslash \{0\}$. If for every  $d\in C(\bar x ) \backslash \{0\}$  there is some $(\alpha,\lambda)\in\mathbb{R}\times\mathbb{R}^m$ satisfying \eqref{EQAlpha}
  and
\begin{equation}\label{EqSecOrderSuffReg}
 \nabla^2_{xx}L^g(\bar x ,\alpha,\lambda)(d,d) - \sigma_{T^2_{\Lambda}  (g(\bar x );\nabla g(\bar x )d)}(\lambda) > 0,
 \end{equation}
 then the second order growth condition holds at $\bar x $.
 If, in addition,  the feasible mapping $M(x)=g(x)-\Lambda$ is metrically subregular at $(\bar x,0)$ in direction $d$, $\nabla f(\bar x)d=0$  and $ T^2_{\Lambda}(g(\bar x );
 \nabla g(\bar x )d)$ is convex for every  $ d\in C(\bar x )\backslash \{0\}$, then the second order conditions
 \begin{equation}\label{EqNoGapQuadrGr}  \sup_{\lambda\in\Lambda(\bar x;d)}\Big(\nabla^2_{xx}L(\bar x ,\lambda)(d,d) -
 {\sigma_{T^2_{\Lambda} (g(\bar x );\nabla g(\bar x )d)}(\lambda)}\Big)>0,\, \forall d \in C(\bar x ) \backslash \{0\}
 \end{equation}
 are necessary and sufficient for the second order growth condition at the point $\bar x $.
 \end{theorem}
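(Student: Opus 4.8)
The plan is to prove the two assertions of the theorem separately. The first assertion — that conditions \eqref{EQAlpha} and \eqref{EqSecOrderSuffReg} for every $d\in C(\bar x)\backslash\{0\}$ force the quadratic growth condition — will follow directly from Theorem~\ref{Thm5.2}. The second assertion — the equivalence, under the additional hypotheses, between quadratic growth and \eqref{EqNoGapQuadrGr} — will be obtained by combining Theorem~\ref{second-order-theorem-1} with a quadratic perturbation of the objective.

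For the first assertion, I would observe that outer second-order regularity of $\Lambda$ at $g(\bar x)$ in direction $\nabla g(\bar x)d$ means exactly that $\mathcal{A}(d):=T^2_\Lambda(g(\bar x);\nabla g(\bar x)d)$ is an upper second-order approximation set for $\Lambda$ at $g(\bar x)$ in that direction. With this choice of $\mathcal{A}(d)$, conditions \eqref{EQAlpha} and \eqref{EqSecOrderSuffReg} are precisely the hypotheses of Theorem~\ref{Thm5.2}, so the quadratic growth condition at $\bar x$ is immediate. The ``sufficiency'' half of the second assertion then reduces to the first: assuming \eqref{EqNoGapQuadrGr}, for each $d\in C(\bar x)\backslash\{0\}$ positivity of the supremum forces $\Lambda(\bar x;d)\neq\emptyset$ and yields some $\lambda\in\Lambda(\bar x;d)$ with $\nabla^2_{xx}L(\bar x,\lambda)(d,d)-\sigma_{T^2_\Lambda(g(\bar x);\nabla g(\bar x)d)}(\lambda)>0$; taking $\alpha=1$, the identities $\alpha\nabla f(\bar x)d=\nabla f(\bar x)d=0$ (by hypothesis) and $\nabla_xL^g(\bar x,1,\lambda)=\nabla_xL(\bar x,\lambda)=0$ give \eqref{EQAlpha}, while \eqref{EqSecOrderSuffReg} becomes the displayed strict inequality, so the first assertion applies.

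For the ``necessity'' half, suppose the quadratic growth condition holds with modulus $\delta>0$ on a neighbourhood of $\bar x$. Then $\bar x$ is a local minimizer of the perturbed problem $\min_x\, f(x)-\delta\|x-\bar x\|^2$ subject to $g(x)\in\Lambda$; since the added term has vanishing gradient at $\bar x$, this problem has the same feasible set, the same critical cone $C(\bar x)$ and the same directional M-multiplier set $\Lambda(\bar x;d)$ as (P), while its Lagrangian Hessian at $\bar x$ in direction $d$ equals $\nabla^2_{xx}L(\bar x,\lambda)(d,d)-2\delta\|d\|^2$. Fix $d\in C(\bar x)\backslash\{0\}$. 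If $T^2_\Lambda(g(\bar x);\nabla g(\bar x)d)=\emptyset$, then $\sigma_{T^2_\Lambda(g(\bar x);\nabla g(\bar x)d)}\equiv-\infty$ by convention and $\Lambda(\bar x;d)\neq\emptyset$ by Proposition~\ref{PropDirFirstOrder} (applicable because $\bar x$ is locally optimal, $d\in C(\bar x)$ and $M$ is metrically subregular at $(\bar x,0)$ in direction $d$), so \eqref{EqNoGapQuadrGr} holds trivially. Otherwise, Theorem~\ref{second-order-theorem-1} applied to the perturbed problem produces $\lambda\in\Lambda(\bar x;d)$ with $\nabla^2_{xx}L(\bar x,\lambda)(d,d)-2\delta\|d\|^2-\hat\sigma_{T^2_\Lambda(g(\bar x);\nabla g(\bar x)d)}(\lambda)\geq0$; since $T^2_\Lambda(g(\bar x);\nabla g(\bar x)d)$ is convex, Proposition~\ref{PropHatSigma}(2) lets me replace $\hat\sigma$ by $\sigma$, yielding $\nabla^2_{xx}L(\bar x,\lambda)(d,d)-\sigma_{T^2_\Lambda(g(\bar x);\nabla g(\bar x)d)}(\lambda)\geq2\delta\|d\|^2>0$, which is \eqref{EqNoGapQuadrGr}.

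The hard part is precisely this last step: one must check carefully that perturbing the objective by $-\delta\|x-\bar x\|^2$ leaves the feasible region, the critical cone and the directional multiplier set invariant while shifting the Hessian term by exactly $-2\delta\|d\|^2$, so that the ``$\geq0$'' delivered by Theorem~\ref{second-order-theorem-1} upgrades to the strict ``$>0$'' required in \eqref{EqNoGapQuadrGr}; one also has to keep track of the convention $\sigma_\emptyset\equiv-\infty$ and of the convexity-based identity $\hat\sigma=\sigma$ from Proposition~\ref{PropHatSigma}(2) when passing between the lower generalized support function and the ordinary support function.
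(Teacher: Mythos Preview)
Your proposal is correct and follows essentially the same approach as the paper: the first assertion is a direct application of Theorem~\ref{Thm5.2} with $\mathcal{A}(d)=T^2_\Lambda(g(\bar x);\nabla g(\bar x)d)$, sufficiency in the second assertion comes from taking $\alpha=1$, and necessity is obtained by applying Theorem~\ref{second-order-theorem-1} to the perturbed problem $f(x)-\delta\|x-\bar x\|^2$, then invoking Proposition~\ref{PropHatSigma}(2) to pass from $\hat\sigma$ to $\sigma$ under convexity, with the empty second-order tangent set handled separately via the convention $\sigma_\emptyset\equiv-\infty$.
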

 \begin{proof}The sufficiency of \eqref{EqNoGapQuadrGr} for the quadratic growth condition follows from \eqref{EqSecOrderSuffReg} by taking $\alpha=1$. There remains to show  the necessity of \eqref{EqNoGapQuadrGr} for the quadratic growth condition. Assume that $f(x)\geq f(\bar x)+\delta\norm{x-\bar x}^2$ holds for all feasible $x$ sufficiently close to $\bar x$ for some $\delta>0$ and consider $d\in C(\bar x)\backslash \{0\}$. Then $\Lambda(\bar x;d)\not=\emptyset$ by Proposition \ref{PropDirFirstOrder} and $\bar x$ is a local minimizer of the problem
  \[\min_x f(x)- \delta\norm{x-\bar x}^2\quad\mbox{subject to}\quad g(x)\in\Lambda.\]
 By Theorem \ref{second-order-theorem-1}, there is some $\lambda\in \Lambda(\bar x;d)$ such that
 \[\nabla_{xx}^2 L(\bar x,\lambda)(d,d)-2\delta\norm{d}^2- \hat\sigma_{T^2_{\Lambda} (g(\bar x );\nabla g(\bar x )d)}(\lambda)\geq 0.\]
 But by assumption $ T^2_{\Lambda}(g(\bar x );
 \nabla g(\bar x )d)$ is convex and hence $$\hat\sigma_{T^2_{\Lambda} (g(\bar x );\nabla g(\bar x )d)}(\lambda)=\sigma_{T^2_{\Lambda} (g(\bar x );\nabla g(\bar x )d)}(\lambda)$$ by Proposition \ref{PropHatSigma}(2),
  and \eqref{EqNoGapQuadrGr} follows, provided $T^2_{\Lambda} (g(\bar x );\nabla g(\bar x )d)\not=\emptyset$.
On the other hand, if $T^2_{\Lambda} (g(\bar x );\nabla g(\bar x )d)=\emptyset$ then \eqref{EqNoGapQuadrGr} automatically holds because the support function of the empty set is identical $-\infty$ by definition.
 \end{proof}
\section{Examples}
{In this section we use some examples to illustrate our theory.} We will apply our theory to the class of SOC-MPCCs and MPCCs in a forthcoming paper \cite{GYZZ}.
\begin{example}
  Consider the one-dimensional problem
  \[\min_{x\in\R}-\frac 12 x^2\ \ {\rm s.t.}\ \ g(x):=(x^2,x)\in \Lambda:=C_1\cup C_2\]
  at the reference point $\bar x=0$,   where $C_1:=\{(x_1,x_2)\, |\, (x_1-1)^2+x_2^2\leq 1\}$ and $C_2:=\{(x_1,x_2)\,|\, (x_1+1)^2+x_2^2\leq 1\}$ are unit circles with center $(1,0)$ and $(-1,0)$, respectively. Clearly, the feasible region is ${\cal F}=[-1,1]$ and thus $\bar x$ is not a local minimizer. We want to check whether we can reject $\bar x$ as a local minimizer by our theory. First at all note that the feasible set mapping $g(x)-\Lambda$ is metrically subregular at $(\bar x,0)$ because all $x\in\R$ sufficiently close to $\bar x$  are feasible. Straightforward calculations yield
  \[T_\Lambda(g(\bar x))=T_{C_1}(g(\bar x))\cup T_{C_2}(g(\bar x)) =(\R_+\times\R) \cup (\R_-\times\R)=\R^2,\]
   $\widehat N_\Lambda(g(\bar x))=\{(0,0)\}$, and $C(\bar x)=\R$. Consider the  critical direction $d=1$, the discussion for the opposite direction $d=-1$ could be performed similarly. Utilizing \cite[Propositions 3.30 and 3.37]{BS} it is not difficult to show that
   \begin{eqnarray*}T^2_\Lambda(g(\bar x);\nabla g(\bar x)d)&=&T^2_{C_1}(g(\bar x);\nabla g(\bar x)d)\cup T^2_{C_2}(g(\bar x);\nabla g(\bar x)d)\\
   &=&(\{t\;\mid\;t\geq 1\}\times\R)\cup(\{t\;\mid\;t\leq -1\}\times\R).
   \end{eqnarray*}
   Since $\nabla g(\bar x)0+\nabla ^2g(\bar x,\bar x)(d,d)=(2,0)\in T_\Lambda^2(g(\bar x);\nabla g(\bar x)d)$, we obtain $0\in T^2_{\cal F}(\bar x;d)$ from \eqref{EqSecOrdTanSetF}. By observing $\nabla f(\bar x)0+\nabla^2f(\bar x)(d,d)=-1<0$, we may conclude from Theorem \ref{ThBasicSecOrd} that $\bar x$ is not a local minimizer.

   Next let us apply Theorem \ref{second-order-theorem-1}.
   Since for all sufficiently small $x>0$  the point $y^1(x)=(1-\sqrt{1-x^2},x)$ belongs to $C_1$ but not to $C_2$, we obtain $\widehat N_\Lambda(y^1(x))= \widehat N_{C_1}(y^1(x))=\R_+(-\sqrt{1-x^2}, x)$. Together with $\lim_{x\downarrow0}\frac{y^1(x)-(0,0)}x=(0,1)=\nabla g(\bar x)d$ we obtain $\R_+(-1,0)=\R_-\times\{0\}\subseteq N_\Lambda(g(\bar x);\nabla g(\bar x)d)$. Similar arguments using the points $y^2(x)=(\sqrt{1-x^2}-1,x)$ show $\R_+\times\{0\}\subseteq N_\Lambda(g(\bar x);\nabla g(\bar x)d)$ and in fact, it follows that $N_\Lambda(g(\bar x);\nabla g(\bar x)d)=\R\times\{0\}$. It follows that for every $\lambda\in  N_\Lambda(g(\bar x);\nabla g(\bar x)d)$ we have $\nabla g(\bar x)^T\lambda=0$ and consequently $\Lambda(\bar x;d)=N_\Lambda(g(\bar x);\nabla g(\bar x)d)=\R\times\{0\}$. By taking
   \[A=\nabla g(\bar x)\R+\nabla^2g(\bar x)(d,d)=\{2\}\times\R\subset {\rm int\;} T^2_\Lambda(g(\bar x);\nabla g(\bar x)d)\]
   we have
   \[\hat \sigma_{T^2_\Lambda(g(\bar x);\nabla g(\bar x)d), A}(\lambda)=\begin{cases}0&\mbox{if $\lambda=(0,0)$,}\\
   \infty &\mbox{else,}
   \end{cases}\]
   because  $u\in N^{-1}_{T^2_\Lambda(g(\bar x);\nabla g(\bar x)d)}(\lambda)\cap A=\emptyset$ whenever $\lambda\not=(0,0)$. Thus  we obtain
   \[\nabla_{xx}^2L(\bar x,\lambda)-\hat \sigma_{T^2_\Lambda(g(\bar x);\nabla g(\bar x)d), A}(\lambda)=\begin{cases}-1&\mbox{if $\lambda=(0,0)$,}\\
   -\infty &\mbox{else}
   \end{cases}\]
   violating condition \eqref{EqSecOrderOpt}. Thus we can also conclude from Theorem \ref{second-order-theorem-1} that $\bar x$ is not a local minimizer.

   On the other hand, for $\lambda:=(1,0)\in \Lambda(\bar x;d)$ we have $N^{-1}_{T^2_\Lambda(g(\bar x);\nabla g(\bar x)d)}(\lambda)=\{-1\}\times\R$ and $\hat \sigma_{T^2_\Lambda(g(\bar x);\nabla g(\bar x)d)}(\lambda)=-1$ follows. Hence
   \[\nabla_{xx}^2L(\bar x,\lambda)-\hat \sigma_{T^2_\Lambda(g(\bar x);\nabla g(\bar x)d)}(\lambda)=-1-(-1)=0\]
   and the second-order necessary condition \eqref{EqSecOrderOptnew} is fulfilled.

   Note that we can not apply Theorem \ref{CorClSecondOrder} because condition \eqref{EqClMetrReg} fails to hold because of $\nabla g(\bar x)^T\lambda=0$ with $\lambda=(1,0)\in N_\Lambda(g(\bar x);\nabla g(\bar x)d)$.
\end{example}

\begin{example}\label{ExMPCC}
  Consider the {MPCC } 
  \begin{eqnarray*}\min && f(x_1,x_2,x_3):=x_1-x_2+x_3 +\frac 12 x_2^2- x_3^2\\
  {\rm s.t.}&& g(x_1,x_2,x_3):=\left(\begin{array}{c}-x_1\\-x_2\\-4x_1+x_2-x_3\\ -3x_2-x_3\end{array}\right)\in\Lambda:=D_{CC}\times\R_-\times\R_-,
  \end{eqnarray*}
  at $\bar x=(0,0,0)$, where $D_{CC}:=\{(a,b)\in\R^2_-\,|\, ab=0\}$ denotes {the complementarity cone in $\R^2$}.
   By considering points of the form $(0,t,t)$ with $t>0$ it is easy to see that $\bar x$ is not a local minimizer and we want to verify this with our theory. We claim that $C(\bar x)=\{(0,t,t)\,|\,t\geq 0\}$. Indeed, the inclusion ``$\supseteq$'' obviously holds and we only have to verify the opposite  inclusion. Since $g$ is linear and $\Lambda$ is a cone, every critical direction $d$ must fulfill $g(d)\in\Lambda$, from which, by considering the third and fourth component of the system, the inequality
  \[d_2-d_3\leq \min\{4d_1,4d_2\}=0\]
  follows. Together with $\nabla f(\bar x)d=d_1-(d_2-d_3)\leq 0$ and $d_1\geq 0$ we conclude $d_1=0$ and $d_3=d_2\geq 0$ proving our claim. Now consider a critical direction $d=(0,t,t)$ with $t>0$. By using \cite[Lemma 4.1]{Gfr14a} we obtain
  \[N_\Lambda(g(\bar x); \nabla g(\bar x) d)=\R\times\{0\}\times\R_+\times\{0\}=N_\Lambda^c(g(\bar x); \nabla g(\bar x) d)\]
  and thus  both {DirRCQ} 
   \eqref{EqClMetrReg} and the directional non-degeneracy condition \eqref{EqNondegeneracy} are fulfilled. Straightforward calculation yield that
  $\Lambda(\bar x;d)=\Lambda^c(\bar x;d)=\Lambda^s(\bar x;d)=\{\lambda_0\}$ with $\lambda_0=(-3,0,1,0)$. Further, $T^2_\Lambda(g(\bar x);\nabla g(\bar x)d)=\{0\}\times\R\times\R_-\times\R$ and $\sigma_{T^2_\Lambda(g(\bar x);\nabla g(\bar x)d)}(\lambda_0)=0$. Hence
  \[\nabla_{xx}^2L(\bar x,\lambda_0)(d,d)-\sigma_{T^2_\Lambda(g(\bar x);\nabla g(\bar x)d)}(\lambda_0)=t^2-2t^2=-t^2<0\]
  and we can reject $\bar x$ as a local minimizer by means of Corollary \ref{CorSpanNormal}. Further we could reject $\bar x$ also by Theorem \ref{second-order-theorem-1} together with Proposition \ref{PropHatSigma}(2) due to  convexity of the second-order tangent set $T^2_\Lambda(g(\bar x);\nabla g(\bar x)d)$. However, we cannot apply Corollary \ref{CorNondegen} because condition \eqref{EqGenNondegen} is not fulfilled. {This example demonstrates that directional non-degeneracy condition \eqref{EqNondegeneracy}
{is} strictly weaker than the non-directional condition \eqref{EqGenNondegen}, which, in terms of the literature on MPCC, is equivalent to MPCC-LICQ.}
\end{example}

\begin{example}
  Consider the problem
    \begin{eqnarray*}\min && f(x_1,x_2,x_3):=x_1-x_2+x_3 +\frac 12 x_2^2- x_3^2\\
  {\rm s.t.} && g(x_1,x_2,x_3):=\left(\begin{array}{c}-x_1\\-x_2\\-4x_1+x_2-x_3+x_2^2\\ -3x_2-x_3\end{array}\right)\in\Lambda:=D_{CC}\times\R_-\times\R_-,
  \end{eqnarray*}
  at $\bar x=(0,0,0)$, which differs from the problem in the preceding example only by the presence of the term $x_2^2$ in the third component of $g$. Thus $C(\bar x)$ remains unchanged and it is easy to see that $\Lambda$ is outer second-order regular at $g(\bar x)$ in direction $\nabla g(\bar x)d$ for every critical direction $d=(0,t,t)$ with $t>0$. Further,
  \[\nabla_{xx}^2L(\bar x,\lambda_0)(d,d)-\sigma_{T^2_\Lambda(g(\bar x);\nabla g(\bar x)d)}(\lambda_0)=t^2>0\]
  and therefore the sufficient second-order condition  \eqref{EqSecOrderSuffReg}  is fulfilled implying that $\bar x$ is a strictly local minimizer satisfying the second-order growth condition.
\end{example}

\begin{example}
  Consider the program
  \begin{eqnarray*}\min&& f(x_1,x_2)=-x_1+\frac 12 x_2^2\\
  {\rm s.t.} &&g(x_1,x_2)=\left(\begin{array}
    {c}-x_1\\-x_2\\x_1^2-x_2
  \end{array}\right)\in\Lambda:=D_{cc}\times\R_-
  \end{eqnarray*}
  at $\bar x=(0,0)$. Then $C(\bar x)=(\{0\}\times\R_+)\cup (\R_+\times\{0\})$ and $\Lambda$ is outer second-order regular at $g(\bar x)$ in direction $\nabla g(\bar x)d$ for every critical direction $d\in C(\bar x)$. If $d=(t,0)$, $t>0$, then $N_\Lambda(g(\bar x);\nabla g(\bar x)d)=\{0\}\times\R\times\R_+$ and $T^2_\Lambda(g(\bar x);\nabla g(\bar x)d)=\R\times\{0\}\times\R_-$ and condition \eqref{EqSecOrderSuffReg} is fulfilled with $\alpha=0$ and $\lambda=(0,-1,1)$. On the other hand, for $d=(0,t)$, $t>0$, we have $N_\Lambda(g(\bar x);\nabla g(\bar x)d)=\R\times\{0\}\times\{0\}$ and $T^2_\Lambda(g(\bar x);\nabla g(\bar x)d)=\{0\}\times\R\times\R$. Now \eqref{EqSecOrderSuffReg} is fulfilled with $\alpha=1$, $\lambda=(-1,0,0)$
  and we have verified that $\bar x$ is a strictly local minimizer fulfilling the second-order growth condition.
\end{example}

 \end{document}